\newcommand{\maps}{:}
\newcommand{\im}{\operatorname{im}}
\newcommand{\ZZ}{\mathbb{Z}}
\newcommand{\FF}{\mathscr{F}}
\newcommand{\GG}{\mathscr{G}}
\newcommand{\bracket}[1]{\left \langle #1 \right\rangle}
\newcommand{\parens}[1]{\left( #1 \right)}
\newtheorem{lemma}{Lemma}[section]
\newtheorem{proposition}{Proposition}[section]
\newtheorem{theorem}{Theorem}
\newtheorem{lettertheorem}{Theorem}
\newtheorem{letterconjecture}{Conjecture}
\newtheorem{definition}{Definition}
\newtheorem{conjecture}{Conjecture}[section]
\newtheorem{example}{Example}[section]
\newtheorem{remark}{Remark}[section]
\newtheorem{corollary}{Corollary}[section]
\newenvironment{proof}{\vspace*{2ex}\noindent {\em Proof:}
}{\hfill $\diamond$ \\[2ex]}
\providecommand\@dotsep{5}
\renewcommand{\listoftodos}[1][\@todonotes@todolistname]{%
  \@starttoc{tdo}{#1}}
\title{Gabriel's Theorem for Infinite Quivers}
\author{Nathaniel Gallup and Stephen Sawin}
\date{\today}
\begin{document}

\maketitle

\begin{abstract}
We prove a version of Gabriel's theorem for (possibly infinite dimensional) representations of infinite quivers. More precisely, we show that the representation theory of  quiver $\Omega$ is of unique type (each dimension vector has at most one associated indecomposable) and infinite Krull-Schmidt (every, possibly infinite dimensional, representation is a direct sum of indecomposables) if and only if $\Omega$ is eventually outward and of generalized ADE Dynkin type ($A_n$, $D_n$, $E_6$, $E_7$, $E_8$, $A_\infty$, $A_{\infty, \infty}$, or $D_\infty$). Furthermore we define an analog of the Euler-Tits form on the space of eventually constant infinite roots and show that a quiver is of generalized ADE Dynkin type if and only if this form is positive definite. In this case the indecomposables are all locally finite-dimensional and eventually constant and correspond bijectively to the positive roots (i.e. those of length $1$). 
\end{abstract} 

\section{Introduction}

Gabriel's famous theorem (\cite{gabriel1972}) states that a quiver $\Omega$ is of finite-type (has finitely many isomorphism classes of indecomposable representations) if and only if it is of ADE Dynkin type (the underlying graph of $\Omega$ is one of the ADE Dykin diagrams $A_n, D_n, E_6, E_7, E_8$). In this case, the function which sends the isomorphism class of a representation $V$ of $\Omega$ to its dimension vector (the function which assigns to every vertex $i$ the dimension of $V_i$) gives a bijection between the isomorphism classes of indecomposable representations and the dimension vectors whose length is $1$ with respect to the Tits form $\bracket{n , n}_\Omega = \sum_{ i \in \Omega'_0} n_i^2 -  \sum_{a \in \Omega'_1}n_{s(a)}n_{t(a)}$. In particular this shows that the representation category of an ADE Dynkin quiver is of \emph{unique type} as well, meaning each dimension vector has at most one indecomposable associated to it. 

The category $\text{Reps}_\text{f}(\Omega)$ of finite-dimensional representations of any finite quiver $\Omega$ is clearly Krull-Schmidt (every object is uniquely a finite direct sum of indecomposables). However Ringel (\cite{ringel2016}) showed that infinite-dimensional representations of ADE Dynkin quivers can also uniquely be written as a (possibly infinite) direct sum of indecomposables (we then call the category $\text{Reps}(\Omega)$ of possibly infinite-dimensional representations of $\Omega$ \emph{infinite Krull-Schmidt}). 

The situation for infinite quivers is complicated by the fact that it is no longer obvious that even locally finite-dimensional representations are infinite Krull-Schmidt. However, in \cite{bautista-liu-paquette2011}, Bautista, Liu, and Paquette showed that this is the case for locally finite-dimensional representations of certain infinite quivers including an eventually outward $A_{\infty, \infty}$ quiver and identified the indecomposables as being in bijection with the connected full subquivers (as is true in the $A_n$ case as well). 

In \cite{gallup2022decompositions}, we simultaneously weakened the finiteness hypotheses of \cite{ringel2016} and \cite{bautista-liu-paquette2011}, and showed that  for an eventually outward $A_{\infty, \infty}$ quiver $\Omega$ $\text{Reps}(\Omega)$ is of unique type and  infinite Krull-Schmidt. We furthermore showed that again the indecomposables in this case are FLEI (meaning locally finite-dimensional and isomorphic on all but finitely many arrows) and in bijection with the connected full subquivers. We also gave an example of a representation of an $A_{\infty, \infty}$ quiver which is not eventually outward whose category of representations is not infinite Krull-Schmidt. In this paper, we prove the following analogous result for $D_\infty$. 

\begin{lettertheorem}
The representation theory $\text{Reps}(\Omega)$ of any eventually outward $D_\infty$ quiver $\Omega$ is of unique type and infinite Krull-Schmidt. Taking dimension vectors gives a bijection between the indecomposables of $\Omega$ (which are all FLEI) and the positive roots of $\Omega$ (by which we mean functions $n$ from $\Omega_0$ to $\mathbb{Z}_{\geq 0}$ which are constant along a cofinite set of arrows and have the property that the directed limit $\lim_{\Omega'} \bracket{n , n}_{\Omega'}$ taken over certain finite subquivers $\Omega'$ is equal to $1$). 
\end{lettertheorem}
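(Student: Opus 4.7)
My plan is to mirror the strategy we used for the $A_{\infty,\infty}$ case in \cite{gallup2022decompositions}, with classical Gabriel for finite $D_n$ supplying the finite-dimensional input and the eventually outward hypothesis letting us transfer information between $\Omega$ and an exhausting sequence of finite $D_n$ subquivers. Concretely, I would fix a finite "core" $\Omega'$ containing the trivalent branch vertex and every inward-pointing arrow, and then write $\Omega$ as a nested union of finite subquivers $\Omega^{(k)} \supseteq \Omega'$ whose underlying graphs are $D_{n(k)}$. Everything outside $\Omega'$ is a disjoint union of rays pointing outward, so pullback along the obvious quotient maps $\Omega \to \Omega^{(k)}$ identifies any representation supported on $\Omega^{(k)}$ (and constant on its complement) with a representation of $\Omega$.

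First I would identify the positive roots. Given a nonnegative integer function $n$ on $\Omega_0$ that is eventually constant along a cofinite set of arrows, I would show that the directed limit $\lim_k \bracket{n, n}_{\Omega^{(k)}}$ exists and equals $1$ precisely when, for some (equivalently all sufficiently large) $k$, the restriction of $n$ is a positive root of the classical $D_{n(k)}$ root system. This gives an explicit inventory: the positive roots of $\Omega$ come in the two families familiar from the finite $D_n$ case (connected intervals avoiding one branch leaf, and "double" roots hitting both branch leaves), possibly extended out along the infinite tail. For each such $n$, the pullback along $\Omega \to \Omega^{(k)}$ of the corresponding classical Gabriel indecomposable yields an FLEI indecomposable $M(n)$ of $\Omega$, whose dimension vector is $n$.

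Next I would argue that every indecomposable of $\text{Reps}(\Omega)$ is some $M(n)$. Given an indecomposable $V$, the eventually outward hypothesis lets me run the pushing/peeling argument of \cite{gallup2022decompositions}: far enough out along each outward ray, the linear maps $V_a$ can be placed in a split normal form, and indecomposability forces $V$ to be FLEI, hence pulled back from some $\Omega^{(k)}$. Classical Gabriel on that finite $D_{n(k)}$ subquiver then matches $V$ to a unique $M(n)$, which simultaneously delivers the unique type statement.

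Finally, for the infinite Krull-Schmidt property on arbitrary (not necessarily FLEI) representations, I would imitate Ringel's strategy for $D_n$ combined with the branch-agnostic parts of our $A_{\infty,\infty}$ argument: exhaust $V$ by its restrictions to $\Omega^{(k)}$, decompose each restriction using finite Gabriel, and show the decompositions are compatible in the limit thanks to the outward orientation. I expect the main obstacle to be precisely the trivalent branch vertex: the peeling argument from the $A_{\infty,\infty}$ paper only needed to coordinate two maps at each step, whereas at the branch of $D_\infty$ three representations must be split coherently and some of the $D_n$ indecomposables (in particular the "doubled" ones that visit both leaves) have no $A$-type analog. I anticipate that handling this will require replacing the single-arrow splitting lemma of the earlier paper with a dedicated analysis on the $D_4$ subquiver sitting at the branch, using the classification of its six nontrivially-branched indecomposables to verify that the chosen decompositions at consecutive finite stages can be made compatible, and that the limiting direct sum exhausts $V$.
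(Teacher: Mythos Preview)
Your route is genuinely different from the paper's and the crucial step is not yet under control.

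The paper does \emph{not} argue by exhausting $V$ through finite $D_{n(k)}$ subquivers and stitching decompositions together. Instead it introduces a partial order on the set $\operatorname{indec}_{\text{FLEI}}(\Omega)$ of isomorphism classes of FLEI indecomposables (Definition~\ref{def: partial order on indecomposables}: $\alpha \geq \beta$ when there is a chain of nontrivial morphisms from a representative of $\alpha$ to one of $\beta$), and the entire argument hinges on proving this order is \emph{well-founded}. That is established in two stages: first for the ``mountain'' orientation (all arrows pointing away from the trivalent vertex) by a case analysis on five types of indecomposables (Lemma~\ref{lm: ordering among types} through Proposition~\ref{prop: no infinite sequence of non-trivial maps}), and then transported to an arbitrary eventually outward orientation using reflection functors (Section~\ref{sec: reflection functors}). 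Once well-foundedness is in hand, Section~\ref{sec: poset filtration of subrepresentations} builds, for any representation $V$, a filtration $F_\beta = \sum_{X \in \alpha \leq \beta} X$ and by transfinite induction produces complements $G_\beta$ with $V = \bigoplus_\beta G_\beta$. No limit of finite-stage decompositions is ever taken; the trivalent vertex is never singled out in the Krull--Schmidt argument.

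Your exhaustion strategy is the more hands-on approach and has the virtue of reusing the $A_{\infty,\infty}$ machinery directly, but the step you flag as ``the main obstacle'' is in fact the whole theorem: making finite-stage decompositions compatible across $k$ at the branch is exactly what fails to be automatic, and a ``dedicated analysis on the $D_4$ subquiver'' is not yet a plan---the branched indecomposables of arbitrarily large $D_{n(k)}$ interact, not just those of $D_4$. The paper's poset/filtration method sidesteps this compatibility problem entirely, at the cost of the well-foundedness case analysis and the reflection-functor reduction. If you want to pursue your route, you would need a concrete mechanism (analogous to the well-foundedness here, or to a canonical form argument) that forces the successive decompositions to nest; absent that, the proposal has a genuine gap at its final step.
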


We combine this with our $A_{\infty , \infty}$ result from \cite{gallup2022decompositions} to prove the following version of Gabriel's theorem for infinite quivers. 

\begin{lettertheorem}[Gabriel's Theorem for Infinite Quivers]\label{thm: gab intro}
the representation theory $\text{Reps}(\Omega)$ of a connected quiver $\Omega$ is of unique type infinite Krull-Schmidt if and only if $\Omega$ is of generalized ADE Dynkin type and eventually outward. In this case all indecomposables of $\Omega$ are FLEI and in bijection with the positive roots of $\Omega$.  
\end{lettertheorem}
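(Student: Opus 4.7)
The plan is to prove both directions by reducing to the known case-by-case results. For the ``if'' direction, assuming $\Omega$ is connected, eventually outward, and of generalized ADE Dynkin type, I would split by type. For the finite types $A_n$, $D_n$, $E_6$, $E_7$, $E_8$, the eventually outward hypothesis is vacuous and the conclusion follows by combining classical Gabriel \cite{gabriel1972} (for the unique-type bijection with positive roots of the ordinary Tits form) with Ringel's infinite direct sum decomposition theorem \cite{ringel2016} (for infinite Krull-Schmidt). For $A_{\infty,\infty}$ and the half-infinite subcase $A_\infty$, I would invoke the main theorem of \cite{gallup2022decompositions}. For $D_\infty$, I would invoke Theorem A of this paper. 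In each case the bijection with positive roots and the FLEI property are part of the cited result; these glue together because for finite quivers the directed limit defining $\bracket{n,n}_\Omega$ collapses to the classical Tits form.

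For the ``only if'' direction I would argue by contrapositive: suppose $\Omega$ is connected but either (i) not of generalized ADE Dynkin type, or (ii) of generalized ADE Dynkin type but not eventually outward. In case (i) with $\Omega$ finite, classical Gabriel already produces a continuous family of non-isomorphic indecomposables sharing a dimension vector, so unique type fails. If $\Omega$ is infinite and not one of $A_\infty$, $A_{\infty,\infty}$, $D_\infty$, I would show the underlying graph must contain either a cycle, a vertex of valence $\geq 4$, two or more vertices of valence $3$ in a configuration outside the $D_\infty$ pattern, or an infinite accumulation of valence-$3$ vertices. Each such configuration contains a finite subquiver of non-ADE Dynkin type (an extended Dynkin $\widetilde{D}_n$ or $\widetilde{E}_n$, or a short cycle). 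I would then lift a bad family from that finite subquiver to $\Omega$ by extension by zero on the remaining vertices and arrows, verify the extensions remain pairwise non-isomorphic as representations of $\Omega$, and conclude unique type fails. In case (ii), I would exhibit a representation with no direct sum decomposition into indecomposables, adapting the $A_{\infty,\infty}$ counterexample from \cite{gallup2022decompositions}: when the orientation is not eventually outward there are infinite sequences of composable arrows in both directions, which can be used to build a representation whose would-be indecomposable summands fail to glue.

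The bijection with positive roots in the affirmative case follows by assembling the case-by-case bijections and checking compatibility with the uniform definition of positive root given via the directed limit of $\bracket{\cdot,\cdot}_{\Omega'}$; since on finite subquivers this limit is the classical Tits form, and since FLEI representations have eventually constant dimension vectors on which the limit is well defined, the pieces match up.

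The main obstacle, I expect, is the forward direction in case (i) for infinite trees outside the generalized ADE list, together with case (ii) for $D_\infty$ quivers that are not eventually outward. Ruling out infinite non-ADE trees by reduction to finite extended-Dynkin subquivers requires care, since the extension-by-zero argument must be checked not only to preserve non-isomorphism but also to remain indecomposable (or at least produce a genuine unique-type failure) in the presence of the ambient infinite quiver. Producing an explicit non-decomposable representation on a non-eventually-outward $D_\infty$ quiver is the step most likely to require new work beyond citation, since the known counterexample is stated for $A_{\infty,\infty}$ and must be transplanted across the branch point of $D_\infty$ in a way that still defeats infinite Krull-Schmidt.
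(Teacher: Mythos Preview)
Your proposal is correct and follows essentially the same architecture as the paper's proof of Theorem~\ref{thm: infinite dimensional infinite gabriel's theorem }: the ``if'' direction is a case split invoking Gabriel--Ringel for finite types, \cite{gallup2022decompositions} for $A_\infty$ and $A_{\infty,\infty}$, and this paper's results for $D_\infty$; the ``only if'' direction extends a bad family by zero from a non-ADE finite subquiver (the paper routes this through the Tits-form characterization of Lemma~\ref{lm:Tits positive definite} rather than direct graph enumeration, but the content is the same), and transplants the $A_{\infty,\infty}$ counterexample for non-eventually-outward orientations.

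Your anticipated obstacle for case (ii) on $D_\infty$ is not actually present. ``Not eventually outward'' guarantees a single journey with infinitely many arrows pointing back toward its source; the paper simply places the counterexample of \cite[Section~8]{gallup2022decompositions} along that journey, assigns isomorphisms on the outward-pointing arrows of the journey, and assigns zero to every vertex off the journey. There is no need to carry anything across the trivalent vertex: the branch point is handled by zeroing it out (or it lies on the journey and is treated as an ordinary vertex of the embedded $A_\infty$). So the ``transplant'' is just extension by zero, exactly as in case (i), and requires no new construction.
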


We now give a summary of each section of the paper. In Section \ref{sec: background} we provide the necessary background on quiver representations. In Section \ref{sec: FLEI} we define FLEI representations and give an explicit description of the dimension vector of any FLEI indecomposable of a $D_\infty$ quiver. In Section \ref{sec: partial order} we define a partial order on the set $\operatorname{indec}_{\text{FLEI}}(\Omega)$  of indecomposable FLEI subrepresentations of any representation $V$ of any quiver $\Omega$ and show that in the case that $\Omega$ is of type $D_\infty$ and the arrows all point away from the trivlent vertex, this partial order is well-founded. In Section \ref{sec: reflection functors} we extend  the language of reflection functors from \cite{Bernstein_1973} for use in infinite quivers and use them to show that in fact the partial order on $\operatorname{indec}_{\text{FLEI}}(\Omega)$ is well-founded for $\Omega$ any eventually outward $D_\infty$ quiver.

In Section \ref{sec: poset filtration of subrepresentations} we define an order-preserving function $F$ from $\text{indec}_{\text{FLEI}}(\Omega)$ to the set $\text{sub}(V)$ of subrepresentations of $V$ (which is a poset under inclusion). We prove that $F$ always has an almost gradation, i.e a function $G: \text{indec}_{\text{FLEI}}(\Omega) \to \text{sub}(V)$ with the property that $F_\beta = G_\beta \oplus \sum_{\alpha < \beta} F_\alpha$, and such that $G_\beta$ is a direct sum of indecomposable FLEI representations in $\beta$. We then show that in fact when $\text{indec}_{\text{FLEI}}(\Omega)$ is well-founded, $V = \bigoplus_{\alpha \in \text{indec}_{\text{FLEI}}(\Omega)} G_\alpha$, which completes the proof that $\text{Rep}(\Omega)$ is infinite Krull-Schmidt in this case. This includes the case when $\Omega$ is an $A_\infty$, $A_{\infty, \infty}$, $D_\infty$, or finite ADE Dynkin quiver (we call this collection \emph{generalized ADE Dynkin quivers}). 

In Section \ref{sec: roots} we define the root space of any infinite quiver $\Omega$ as above and show that for favorable $\Omega$, the directed limit $\lim_{\Omega'} \bracket{n , n}_{\Omega'}$ taken over certain finite subquivers $\Omega'$ converges to an element of $\mathbb{Z} \cup \{ \pm \infty \}$ for all elements $n$ of the root space. Defining this limit to be the Tits form of $n$ on $\Omega$, we then prove that this form is positive definite if and only if each component of $\Omega$ is an  generalized ADE Dynkin quiver, and that in this case, eventually outward implies the indecomposables are in bijection with the positive roots (those $n$ with Tits form $1$). 

In Section \ref{sec: infininte gabs theorem} we combine the results of the previous sections to prove Theorem \ref{thm: gab intro}, and finally in Section \ref{sec: future work} we make the following conjecture about a possible strengthening of Theorem \ref{thm: gab intro} via the removal of the ``eventually outward'' condition when restricting to the category $\text{Rep}_{\text{lf}}(\Omega)$ of locally finite-dimensional representations. 

\begin{letterconjecture}[Locally Finite-Dimensional Infinite Gabriel's Thm.]
For a connected quiver $\Omega$ the category $\text{Rep}_{\text{lf}}(\Omega)$ is of unique type and is infinite Krull-Schmit if and only if it is of generalized ADE Dynkin type. In this case all indecomposables of $\Omega$ are FLEI and in one-to-one correspondence with positive roots.
\end{letterconjecture}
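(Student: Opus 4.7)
The plan is to split the conjecture into two directions and leverage Theorem B (together with the authors' earlier $A_{\infty,\infty}$ work in \cite{gallup2022decompositions}) as black boxes where possible. The $(\Rightarrow)$ direction should fall to a forbidden-subquiver argument familiar from classical Gabriel, while the real difficulty lies in $(\Leftarrow)$: explaining why restricting to locally finite-dimensional representations removes the need for the ``eventually outward'' hypothesis that is essential in Theorem B.

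For $(\Rightarrow)$, suppose $\Omega$ is connected but not of generalized ADE Dynkin type. A short case analysis on infinite trees (checking that any connected quiver outside the list $A_n, D_n, E_6, E_7, E_8, A_\infty, A_{\infty,\infty}, D_\infty$ must contain an extended Dynkin diagram as a finite full subquiver) produces a finite full subquiver $\Omega'$ whose underlying graph is not Dynkin. Classical Gabriel on $\Omega'$ supplies two non-isomorphic indecomposables with the same dimension vector, and extending them by zero gives locally finite-dimensional representations of $\Omega$ that witness failure of unique type.

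For $(\Leftarrow)$, fix $\Omega$ of generalized ADE Dynkin type and $V \in \text{Rep}_{\text{lf}}(\Omega)$. My first step would be to construct, for each positive root $n$ of $\Omega$, a canonical indecomposable $I(n) \in \text{Rep}_{\text{lf}}(\Omega)$ with dimension vector $n$. Exhaust $\Omega$ by an ascending chain of finite full subquivers $\Omega'_1 \subset \Omega'_2 \subset \cdots$. The hypothesis that $n$ is constant on a cofinite set of arrows means $n|_{\Omega'_k}$ stabilizes to a classical positive root for $k$ large; classical Gabriel provides indecomposables $I_k$ of $\Omega'_k$ realizing these, and uniqueness of the Krull-Schmidt decomposition of $I_{k+1}|_{\Omega'_k}$ makes the inclusions $I_k \hookrightarrow I_{k+1}$ canonical, so one sets $I(n) := \varinjlim I_k$.

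The remaining and hardest step is to show $V \cong \bigoplus_n I(n)^{m_n}$ for unique multiplicities $m_n \in \NN$. The multiplicity $m_n$ should be computable from $V|_{\Omega'_k}$ for all sufficiently large $k$, and local finite-dimensionality of $V$ forces the sum to be locally finite at every vertex. The main obstacle I foresee is that in the eventually outward setting the authors' reflection-functor machinery sets up a well-founded partial order on indecomposable FLEI subrepresentations, and this well-foundedness can fail for a general non-eventually-outward quiver. The locally finite-dimensional hypothesis should recover it indirectly: any descending chain of FLEI subrepresentations of $V$ must eventually stabilize at each vertex because $V_i$ is finite-dimensional, and I would hope to package this vertex-wise stabilization into a genuine well-founded order on $\operatorname{indec}_{\text{FLEI}}(\Omega)$ so that the filtration arguments of the previous sections apply verbatim. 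Making that packaging work uniformly, without a tame gradation handed to us by ``eventually outward,'' looks to be the crux of the conjecture.
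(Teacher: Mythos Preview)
This statement is a \emph{conjecture} in the paper, not a theorem: the authors explicitly present it as open in Section~\ref{sec: future work} and give no proof. So there is nothing in the paper to compare your proposal against, and the relevant question is whether your sketch actually closes the gap. It does not.

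Your $(\Rightarrow)$ direction is fine and mirrors the argument the paper gives for the corresponding half of Theorem~\ref{thm: infinite dimensional infinite gabriel's theorem }. The problem is entirely in $(\Leftarrow)$, and you yourself flag the crux without resolving it. The well-foundedness used in Section~\ref{sec: poset filtration of subrepresentations} is well-foundedness of the partial order on $\operatorname{indec}_{\text{FLEI}}(\Omega)$, the set of \emph{isomorphism classes} of indecomposable FLEI representations. This order is a property of the quiver $\Omega$ alone and has nothing to do with any particular $V$. Your proposed fix---``any descending chain of FLEI subrepresentations of $V$ must eventually stabilize at each vertex because $V_i$ is finite-dimensional''---is about chains of actual subobjects of a fixed $V$, not about chains in $\operatorname{indec}_{\text{FLEI}}(\Omega)$. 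A strictly descending chain $\alpha_1 > \alpha_2 > \cdots$ of isomorphism classes need not be realized by nested subrepresentations of any single locally finite $V$, so vertex-wise finite-dimensionality of $V$ gives you no leverage on it. The filtration machinery of Section~\ref{sec: poset filtration of subrepresentations} genuinely needs the abstract order to be well-founded (see the proof of Lemma~\ref{lm:F is made of G}), and you have not shown how to recover that, or how to replace the filtration argument with one that only needs the weaker subobject statement.

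There is a second gap: you assert that the direct limit $I(n) = \varinjlim I_k$ is indecomposable and that \emph{every} indecomposable in $\text{Rep}_{\text{lf}}(\Omega)$ arises this way (i.e.\ is FLEI). For a non-eventually-outward orientation this is part of the conjecture, not something you can assume; without eventual outwardness Proposition~\ref{prop: flei closed under sums and quotients and krull-schmidt} fails, so the FLEI category is no longer closed under subs and quotients, and the route from ``$V$ locally finite'' to ``$V$ decomposes into FLEI pieces'' is blocked.
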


%%%%%%%%%%%%%%%%%%%%%%%%%%%%%%%%%%%%%%%%%%%%%%%%%%%%%%%%%%
\section{Background} \label{sec: background}

A quiver $\Omega=(\Omega_0, \Omega_1)$ is a set $\Omega_0$ of \emph{vertices} and a set $\Omega_1$ of ordered pairs of vertices called \emph{arrows}. If $a=(i,j) \in \Omega_1$ we say $s(a)=i$ is the source and $t(a)=j$ is the target. We will name the following classes of quivers, and call them the generalized ADE Dynkin diagrams (this name will be justified in due course!), in each case edges indicating arrows that could go in either direction, the subscript $n$ is the number of vertices, and in the $E_n$ case $n$ can only be $6$, $7$, or $8$.  

\begin{tikzcd}[arrows=-]
   A_n \quad = \qquad    \bullet \arrow[r] &   \bullet \arrow[r]  & \quad \cdots\quad \arrow[r]& \bullet \arrow[r]&  \bullet
 \end{tikzcd}
 
 \begin{tikzcd}[arrows=-]
 & & & & & \bullet \arrow[ld] \\
 D_n \quad = \qquad \bullet \arrow[r] & \bullet \arrow[r] &\quad \cdots \arrow[r] &\bullet \arrow[r] &
  \bullet  & \\
 & & & & &\bullet \arrow[lu]
\end{tikzcd}

 \begin{tikzcd}[arrows=-]
 & & \bullet \arrow[d] & & & \\
 E_n \quad = \qquad  \bullet \arrow[r] & \bullet \arrow[r] & \bullet \arrow[r] &  \bullet \arrow[r] & \quad \cdots \quad \arrow[r] &
  \bullet  
\end{tikzcd}

 \begin{tikzcd}[arrows=-]
   A_\infty\quad  = \qquad  \bullet \arrow[r] &   \bullet \arrow[r]  &  \bullet \arrow[r]  &\quad \cdots 
 \end{tikzcd}
 
 \begin{tikzcd}[arrows=-]
   A_{\infty,\infty} \quad  = \qquad  \cdots \quad \arrow[r] & \bullet \arrow[r]  &   \bullet \arrow[r] & \bullet \arrow[r] & \quad \cdots 
 \end{tikzcd}

 \begin{tikzcd}[arrows=-]
 & & & & \bullet \arrow[ld] \\
 D_\infty \quad = \qquad \cdots \quad \arrow[r] & \bullet \arrow[r] & \bullet \cdots \bullet \arrow[r] &
  \bullet  & \\
 & & & & \bullet \arrow[lu]
\end{tikzcd}

A vertex $i$ is a \emph{source} if it is the target of no arrow, and is a \emph{sink} if it is a source of no arrow.  

For $D_\infty$, we refer to the two univalent vertices on the right in the above diagram as \emph{leaves}, and the single trivalent vertex that is connected to the leaves as the \emph{trivalent} vertex.  The particular orientation in which  the leaves are the sole sources, there are no sinks, and there are  infinite infinite directed paths from the leaves is called the \emph{slide} orientation of $D_\infty$.

A \emph{walk} is a quiver morphism from $A_n$, $A_\infty$ or $A_{\infty, \infty}$ to $\Omega$, a \emph{trail} is a walk where the map on arrows is injective, and a \emph{path} is a trail where the map on vertices is injective.  If the domain of the path $p$ is a  $A_n$ or $A_\infty$ we call $p$, a \emph{journey}, and in this case we call the image of the left-most vertex the \emph{source} of the journey $s(p)$. When the domain is $A_n$ we will call the image of the right-most vertex the target $t(p)$ of the journey. A \emph{cycle} is trail with at least one arrow and with  source and target  the same.

We say that $\Omega$ is \emph{finitely-branching} if it is union of finitely many journeys.  It is called \emph{eventually outward} if every journey contains at most finitely-many arrows that point towards the source of those journeys.

A representation $V$ of $\Omega$ is an assignment $V(i)$ of a vector space (over a fixed field which will be denoted by $\mathbb{F}$) to each vertex $i \in \Omega_0$ and an assignment of a map $V(a)\maps V(s(a)) \to V(t(a))$ to each arrow $a \in \Omega_1$.  Vector spaces will not be assumed to be finite-dimensional unless specified.  A morphism $f \maps V \to W$ of representations is an assignment to each vertex $i$ of a linear map $f(i)\maps V(i) \to W(i)$ so that for each arrow $a$ there is a commutative diagram with $V(a)$, $W(a)$, $f(s(a))$ and $f(t(a))$. We say that $f$ is \emph{nontrivial} if it is not the zero morphism and not an isomorphism. 

We say that a representation $V$ of $\Omega$ is \emph{thin} if every vector space $V_i$ is $0$- or $1$-dimensional.

Recall that a category is \emph{Krull-Schmidt} if every object can be written as a finite direct sum of indecomposable objects, and given two such decompositions there is a bijection between the two sets of summands which takes each indecomposable   to an isomorphic indecomposable. We will say that a category is \emph{infinite Krull-Schmidt} if every object can be written as a possibly  infinite direct sum of indecomposable objects, and given two such decompositions there is a bijection between the two sets of summands which takes each indecomposable   to an isomorphic indecomposable. Note that if the object is \emph{isotypic}, that is to say all the indecomposables in the summand are isomorphic, then the bijection exists as long as the cardinalities are the same.

%%%%%%%%%%%%%%%%%%%%%%%%%%%%%%%%%%%%%%%%%%%%%%%%%%%%%%%%%%
\section{FLEI Representations} \label{sec: FLEI}
% Steve edits 10-21-23
\begin{definition}
A representation $V$ of a quiver $\Omega$ is called \emph{\textbf{f}inite \textbf{l}ocally \textbf{e}ventually \textbf{i}somorphic} or \emph{FLEI} if for each vertex $i$ in $\Omega$, $V(i)$ is finite-dimensional and there are only finitely many arrows $a$ in $\Omega$ for which $V(a)$ is not an isomorphism. More generally, if $\Omega'$ is a full subquiver of $\Omega$ and for every arrow $a$ of $\Omega$ not in $\Omega'$ we have $V(a)$ is an isomorphism, we say that $V$ is \emph{FLEI supported on $\Omega'$.} 
\end{definition}

The category of FLEI representations of $\Omega$, denoted $\operatorname{rep}_{\text{FLEI}}(\Omega)$, is the full subcategory of the category of representations of $\Omega$ whose objects are FLEI, and $\operatorname{rep}_{\text{FLEI}}(\Omega,\Omega')$ is the subcategory of those supported on $\Omega'$. Write $\operatorname{indec}_{\text{FLEI}}(\Omega)$ for the set of equivalence classes of indecomposable objects in $\operatorname{rep}_{\text{FLEI}}(\Omega)$.
\begin{definition}
  If $\Omega$ is a quiver and $\Omega'$ is a full subquiver, we say that $\Omega'$ is a \emph{retraction} of $\Omega$ if each component of $\Omega$ contains exactly one component of $\Omega'$ and every  cycle  of $\Omega$ is in $\Omega'$.
\end{definition}

\begin{remark}
  Notice that if $\Omega'$ is a retraction of $\Omega$, then for every vertex $i$ of $\Omega$ there is a unique vertex $j$ of $\Omega'$ (called the \emph{closest} vertex in $\Omega'$ to $i$) and a unique path from $i$ to $j$ with no arrows in $\Omega'$.  
\end{remark}

\begin{definition}
  If $\Omega$ is a quiver and $\Omega'$ is a retraction of it, define the functor
  \[\FF_{\Omega'}\maps \operatorname{rep}_{\text{FLEI}}(\Omega,\Omega') \to  \operatorname{rep}_{\text{FLEI}}(\Omega')\]
  which is just the restriction of each object and morphism to $\Omega'$.  Define the functor
  \[\GG_{\Omega'} \maps \operatorname{rep}_{\text{FLEI}}(\Omega') \to \operatorname{rep}_{\text{FLEI}}(\Omega,\Omega')\]
  as follows.  If $V$ is an object in   $\operatorname{rep}_{\text{FLEI}}(\Omega')$ then  $\GG_{\Omega'}(V)$ assigns to each vertex $i$  and arrow $a$ in $\Omega'$ the  space $V(i)$ or map $V(a)$ respectively. It assigns to each vertex $i$ \emph{not} in $\Omega'$ the space $V(j)$, where $j$ the closest vertex in $\Omega'$ to $i$.  It assigns to each arrow not in $\Omega'$ the identity (note its source and sink are assigned the same vector space).  Likewise, for a morphism $f$ in $\operatorname{rep}_{\text{FLEI}}(\Omega')$, define $\GG(f)$ to assign to each vertex $i$ in $\Omega'$ the map $f(i)$, and to each vertex $i$ not in $\Omega'$ the map $f(j)$ where $j$ is the vertex in $\Omega'$ closest to $i$.
\end{definition}

\begin{lemma} \label{lem:equivalence of categories}
 If  $\Omega'$ is a retraction of $\Omega$,  then $\FF_{\Omega'}$ and $\GG_{\Omega'}$ give an equivalence of categories between $\operatorname{rep}_{\text{FLEI}}(\Omega,\Omega')$ and $\operatorname{rep}_{\text{FLEI}}(\Omega')$.  
\end{lemma}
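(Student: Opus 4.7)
The plan is to exhibit natural isomorphisms $\FF_{\Omega'} \circ \GG_{\Omega'} \cong \operatorname{id}$ and $\GG_{\Omega'} \circ \FF_{\Omega'} \cong \operatorname{id}$ on the respective categories. First, I would verify the functors are well-defined on the claimed domains and codomains: $\FF_{\Omega'}(W)$ is FLEI on $\Omega'$ by inheritance from $W$, and $\GG_{\Omega'}(V)$ is FLEI on $\Omega$ and supported on $\Omega'$ because it assigns the identity to every arrow outside $\Omega'$ (hence those are isomorphisms), while its finitely many non-isomorphism arrows are exactly those of $V$. Moreover, $\FF_{\Omega'} \circ \GG_{\Omega'}$ equals $\operatorname{id}_{\operatorname{rep}_{\text{FLEI}}(\Omega')}$ literally, since restriction to $\Omega'$ undoes the trivial extension.

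The substantive step is the natural isomorphism $\eta_W \maps \GG_{\Omega'}(\FF_{\Omega'}(W)) \to W$ for $W \in \operatorname{rep}_{\text{FLEI}}(\Omega, \Omega')$. For each vertex $i$ with closest vertex $j$ in $\Omega'$, let $p$ denote the unique path from $i$ to $j$ with no arrows in $\Omega'$. Because $W$ is supported on $\Omega'$, every arrow $a$ appearing in $p$ has $W(a)$ an isomorphism, so composing these maps (inverting whenever $p$ traverses an arrow against its orientation) gives an isomorphism $W(j) \to W(i)$. Noting that $\GG_{\Omega'}(\FF_{\Omega'}(W))(i) = W(j)$ by definition, I take $\eta_W(i)$ to be this composite; when $i \in \Omega'$ this is the identity.

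I would then verify the commuting square for each arrow $a$ of $\Omega$. If $a \in \Omega'$, both endpoints lie in $\Omega'$, $\eta_W$ is the identity at each, and $\GG_{\Omega'}(\FF_{\Omega'}(W))(a) = W(a)$, so the square commutes. If $a \notin \Omega'$, fullness of $\Omega'$ forces at least one endpoint of $a$ outside $\Omega'$; the uniqueness clause in the definition of closest vertex then implies that the closest vertex of $s(a)$ equals the closest vertex of $t(a)$ (call it $j$), and that appending or deleting $a$ converts the path from one endpoint into the path from the other. Since $\GG_{\Omega'}(\FF_{\Omega'}(W))(a) = \operatorname{id}_{W(j)}$, the required identity $W(a) \circ \eta_W(s(a)) = \eta_W(t(a))$ unwinds to a statement about the defining compositions of $\eta_W$ and follows directly. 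Naturality of $\eta$ in $W$ follows by an analogous induction along $p$, using the commuting squares for a morphism $f \maps W \to W'$ at each arrow of $p$.

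The main obstacle is the case analysis for arrows outside $\Omega'$, where one must check that the iterated compositions and inversions built into $\eta_W(s(a))$ and $\eta_W(t(a))$ fit together correctly; this is ultimately bookkeeping enforced by the uniqueness of the path from each vertex to its closest vertex in $\Omega'$. Everything else is formal unwinding of the definitions of $\FF_{\Omega'}$ and $\GG_{\Omega'}$.
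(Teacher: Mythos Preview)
Your proposal is correct and follows essentially the same approach as the paper: both observe that $\FF_{\Omega'}\circ\GG_{\Omega'}$ is literally the identity, and both build the natural isomorphism $\GG_{\Omega'}\circ\FF_{\Omega'}\Rightarrow\operatorname{id}$ at a vertex $i$ by composing (or inverting) the isomorphisms $W(a)$ along the unique path from $i$ to its closest vertex in $\Omega'$. The paper's proof is terser and omits the commuting-square and naturality verifications you spell out, but the content is the same.
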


\begin{proof}
It is immediate that $\FF_{\Omega'}\circ \GG_{\Omega'}$ is the identity functor on $\operatorname{rep}_{\text{FLEI}}(\Omega')$. If $V$ is an object of $\operatorname{rep}_{\text{FLEI}}(\Omega,\Omega')$, then the natural isomorphism $f^V \maps \GG \circ \FF(V) \to V$ is given by $f(i)$ is the identity map if $i$ is in $\Omega'$ and if $i$ is not in $\Omega'$ and $j$ is the closest vertex in $\Omega'$ connected by a path to $i$, then $f(i)$ is the isomorphism $V(i)\to V(j)$ formed by composing $V(a)$ or its inverse for every arrow $a$ in that path. 
\end{proof}

\begin{lemma} \label{lem:indecomposables are the same } If $V \in \operatorname{indec}_{\text{FLEI}}(\Omega,\Omega')$ then  $V \in \operatorname{indec}_{\text{FLEI}}(\Omega)$. If $V \in \operatorname{indec}_{\text{FLEI}}(\Omega)$ then  $V \in \operatorname{indec}(\Omega)$.
\end{lemma}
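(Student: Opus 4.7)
The plan is to prove both implications by essentially the same observation: for any direct-sum decomposition $V = A \oplus B$ in $\operatorname{rep}(\Omega)$ and any arrow $a$, the map $V(a)$ is the block-diagonal map $A(a) \oplus B(a)$, and such a block-diagonal map is an isomorphism if and only if each of $A(a)$ and $B(a)$ is an isomorphism. Likewise $V(i) = A(i) \oplus B(i)$, which is finite-dimensional if and only if both $A(i)$ and $B(i)$ are. So both ``FLEI'' and ``FLEI supported on $\Omega'$'' are conditions that are preserved and reflected by direct summands.

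For the first statement, I would suppose $V \in \operatorname{indec}_{\text{FLEI}}(\Omega,\Omega')$ and let $V = A \oplus B$ be an arbitrary decomposition in $\operatorname{rep}_{\text{FLEI}}(\Omega)$. Since $V$ is supported on $\Omega'$, the map $V(a)$ is an isomorphism for every arrow $a$ not in $\Omega'$, and the observation above forces both $A(a)$ and $B(a)$ to be isomorphisms. Thus both $A$ and $B$ are FLEI \emph{supported on $\Omega'$}, so the decomposition actually takes place in $\operatorname{rep}_{\text{FLEI}}(\Omega,\Omega')$. The indecomposability hypothesis then forces one of $A$, $B$ to be zero, proving $V \in \operatorname{indec}_{\text{FLEI}}(\Omega)$.

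For the second statement, I would suppose $V \in \operatorname{indec}_{\text{FLEI}}(\Omega)$ and take a decomposition $V = A \oplus B$ in $\operatorname{rep}(\Omega)$ (with no a priori FLEI hypothesis on $A$ or $B$). The finite-dimensionality of each $V(i)$ immediately passes to $A(i)$ and $B(i)$, and since $V(a)$ is an isomorphism for all but finitely many arrows $a$, the block-diagonal observation again gives that $A(a)$ and $B(a)$ are isomorphisms for all but finitely many $a$. Hence $A$ and $B$ are both FLEI, the decomposition takes place in $\operatorname{rep}_{\text{FLEI}}(\Omega)$, and one summand vanishes by hypothesis.

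There is no real obstacle here; the content of the lemma is just that the defining conditions of the subcategories $\operatorname{rep}_{\text{FLEI}}(\Omega,\Omega') \subset \operatorname{rep}_{\text{FLEI}}(\Omega) \subset \operatorname{rep}(\Omega)$ are stable under both taking and reflecting direct summands, so any potential nontrivial decomposition in the ambient category is automatically a decomposition in the smaller one.
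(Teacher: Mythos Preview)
Your proof is correct and follows essentially the same approach as the paper's: both argue that any direct-sum decomposition $V = W \oplus X$ in the larger category forces $W$ and $X$ to inherit the local finite-dimensionality and arrow-isomorphism conditions from $V$, hence the decomposition already lives in the smaller category. Your write-up is simply a more explicit version of the paper's terse argument.
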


\begin{proof}
    Suppose $V=W \oplus X$.  Then if $V$ is locally finite-dimensional so are $W$ and $X$.  If $V(a)$ is an isomorphism, then $W_a$ and $X_a$ must be.  Thus if $V$ is FLEI, so are $W$ and $X$, and if $V$ is supported in $\Omega'$ then so must $W$ and $X$ be. 
\end{proof}

\begin{corollary}\label{cor: flei indecomposables of D infinity}
    Every FLEI indecomposable representation of $D_\infty$ is obtained by extending an indecomposable representation of $D_n$ by isomorphisms along the long arm for some $n$. In particular the dimension vector is either thin or is of the following form.

\begin{center}
\begin{tikzcd}[arrows=-]
 & & & & & 1 \arrow[ld] \\
 \cdots \arrow[r] & 1 \arrow[r] & 2 \arrow[r] & \cdots \arrow[r] & 2 &   & \\
 & & & && 1 \arrow[lu]
\end{tikzcd}
\end{center}

\begin{center}
\begin{tikzcd}[arrows=-]
 & & & & && & & 1 \arrow[ld] \\
 \cdots \arrow[r] & 0 \arrow[r] & 1 \arrow[r] & \cdots \arrow[r] & 1 \arrow[r] & 2 \arrow[r] & \cdots \arrow[r] & 2 &   & \\
 & & & &&& & & 1 \arrow[lu]
\end{tikzcd}
\end{center}

\end{corollary}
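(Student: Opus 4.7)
The plan is to use Lemma \ref{lem:equivalence of categories} to reduce the classification of FLEI indecomposables of $D_\infty$ to the classification of indecomposables of a sufficiently large $D_n$, where the classical Gabriel theorem applies.

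First, given an FLEI indecomposable $V$, the FLEI hypothesis provides only finitely many arrows on which $V$ fails to be an isomorphism, so I would pick a finite connected full subquiver $\Omega' \subseteq D_\infty$ containing both endpoints of every such arrow, enlarging if necessary to include the trivalent vertex and both leaves, so that $\Omega' \cong D_n$ for some $n$. Such enlargement preserves FLEI-support because the added arrows are already isomorphisms. Since $D_\infty$ is a connected tree, $\Omega'$ is automatically a retraction, and Lemma \ref{lem:equivalence of categories} gives $V \cong \GG_{\Omega'}(W)$ for $W := \FF_{\Omega'}(V)$. The equivalence of categories takes indecomposables to indecomposables (and by Lemma \ref{lem:indecomposables are the same }, our $V$ is indecomposable in $\operatorname{rep}_{\text{FLEI}}(\Omega, \Omega')$), so $W$ is a finite-dimensional indecomposable of $D_n$, and by classical Gabriel its dimension vector is a positive root of the $D_n$ root system.

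Next I would invoke the known structure of the positive roots of $D_n$: each such root is either thin (all coefficients in $\{0, 1\}$, supported on a connected subtree) or a \emph{doubled} root with $1$'s at both leaves, a nonempty contiguous block of $2$'s along the arm ending at the trivalent vertex, a possibly empty contiguous block of $1$'s on the arm immediately to the left of that $2$-block, and $0$'s further to the left. A key feature of this list is that in every positive root of $D_n$, the coefficient at the leftmost arm vertex is at most $1$.

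Finally, the functor $\GG_{\Omega'}$ extends $W$ to $D_\infty$ by copying the dimension at the leftmost arm vertex of $\Omega'$ along all remaining arrows of the long arm; a thin $W$ therefore extends to a thin $V$, while a doubled $W$ extends to one of the two pictured shapes, with the constant left tail being $0$ or $1$ exactly as illustrated. The main step that deserves care is verifying the bound that the leftmost-arm coefficient of any positive root of $D_n$ is at most $1$; this is a standard root-system calculation but it is precisely what prevents the extension from producing a constant tail of $2$'s and hence ensures the dimension vector matches the claimed form.
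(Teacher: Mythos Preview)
Your proposal is correct and follows essentially the same route as the paper: pick a $D_n$-shaped full subquiver $\Omega'$ on which $V$ is FLEI-supported, use the equivalence of Lemma~\ref{lem:equivalence of categories} (together with Lemma~\ref{lem:indecomposables are the same }) to reduce to a finite-dimensional indecomposable of $D_n$, invoke the classical classification of $D_n$ positive roots, and extend back along the long arm. The paper's proof is terser---it simply cites \cite{Humphreys72} for the root classification---whereas you spell out why $\Omega'$ is a retraction and isolate the key fact that the leftmost arm coefficient of any $D_n$ positive root is at most $1$, but there is no substantive difference in strategy.
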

    
\begin{proof}
If $V \in \operatorname{indec}_{\text{FLEI}}(D_\infty)$, let $\Omega'$ denote a $D_n$-type subquiver of $D_\infty$ such that all arrows outside of $\Omega'$ are isomorphisms. Then by Lemma~\ref{lem:indecomposables are the same }, $\mathscr{F}_{\Omega'}(V) \in \operatorname{indec}(D_n)$, so by \cite{Humphreys72}[Section~12.1] it must be thin or of the second form above. Since $\mathscr{F}_{\Omega'}(V)$ and $V$ agree on $\Omega'$ and all arrows outside of $\Omega'$ are isomorphism, the result follows. 

%\begin{tikzcd}[arrows=-]
% & & & & && &  & & 1 \arrow[ld] \\
% 0 \arrow[r] &  \cdots \arrow[r] & 0 \arrow[r] & 1 \arrow[r] & \cdots \arrow[r] & 1 \arrow[r] & 2 \arrow[r] & \cdots \arrow[r] & 2 &   & \\
% & & & &&& & & & 1 \arrow[lu]
%\end{tikzcd}

\end{proof}

\begin{proposition} \label{prop: flei closed under sums and quotients and krull-schmidt}
Let $\Omega$ be a connected, eventually outward, finitely branched tree quiver. 
\begin{enumerate}
    \item  Every subrepresentation and quotient representation of a FLEI representation is FLEI. 
    
    \item Finite sums of FLEI representations are FLEI. 

    \item The category of FLEI representations is Krull-Schmidt.

\end{enumerate}

\end{proposition}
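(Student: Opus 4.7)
The plan is to dispatch parts (1) and (2) by direct dimension-tracking arguments that use ``eventually outward'' and ``finitely branching'' in an essential way, and to handle (3) by using Lemma~\ref{lem:equivalence of categories} to reduce to the classical Krull-Schmidt theorem for finite-dimensional representations of finite quivers. Part (2) is immediate: if $V = V_1 \oplus \cdots \oplus V_k$ with each $V_r$ FLEI, then $V(i) = \bigoplus_r V_r(i)$ is finite-dimensional and $V(a) = \bigoplus_r V_r(a)$ fails to be an isomorphism only when some $V_r(a)$ does, so the bad-arrow set of $V$ is a finite union of finite sets.

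For part (1), let $V$ be FLEI, $U \subseteq V$ a subrepresentation, and $W = V/U$; clearly $U(i)$ and $W(i)$ are finite-dimensional. The main step is to show only finitely many arrows $a$ give $U(a)$ (resp.\ $W(a)$) non-isomorphic. By finite-branching it suffices to check each journey separately. On an infinite journey $J$, eventually-outward lets us restrict to a tail on which every arrow points away from $s(J)$, and FLEI of $V$ lets us shrink to a further tail on which every $V(a)$ is an isomorphism, so that $\dim V$ is constant on this tail. For such $a$ the restriction $U(a)$ is automatically injective, so $\dim U$ is nondecreasing and bounded above by this constant value, hence eventually constant, which forces $U(a)$ to be an isomorphism on a cofinite subset of the tail. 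This shows $U$ is FLEI. For $W$, a short diagram chase shows that whenever both $V(a)$ and $U(a)$ are isomorphisms, so is $W(a)$, giving FLEI for the quotient.

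For part (3), given an FLEI $V$, let $A$ be the finite set of arrows where $V(a)$ is not an isomorphism. Since $\Omega$ is a tree, the union of the unique paths connecting the endpoints of arrows in $A$ forms a finite connected full subquiver $\Omega' \supseteq A$, and any such $\Omega'$ is automatically a retraction (trees have no cycles and $\Omega'$ is connected). By Lemma~\ref{lem:equivalence of categories}, $\FF_{\Omega'}(V)$ is a finite-dimensional representation of the finite quiver $\Omega'$, which decomposes uniquely up to isomorphism into indecomposables by classical Krull-Schmidt; transferring this back through $\GG_{\Omega'}$ and invoking Lemma~\ref{lem:indecomposables are the same } produces a finite FLEI-indecomposable decomposition of $V$. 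For uniqueness, any two finite decompositions into FLEI indecomposables can be supported simultaneously on a common finite retraction $\Omega''$ (take the union of all bad-arrow sets and embed in a finite connected full subquiver as above), and classical Krull-Schmidt over $\Omega''$ gives the required bijection of summands. The main obstacle is the dimension-tracking argument in (1): both structural hypotheses are needed, since an inward infinite tail or a vertex of infinite valence can create an FLEI representation with subrepresentations or quotients that fail to stabilize along some journey.
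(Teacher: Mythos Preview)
Your proof is correct and follows essentially the same approach as the paper. The only minor variation is that in part (1) you stabilize the subrepresentation first (injective maps give a nondecreasing bounded dimension sequence) and deduce the quotient from the short exact sequence, whereas the paper runs the dual argument (surjective maps on the quotient give a nonincreasing sequence) and deduces the subrepresentation; your uniqueness argument in (3), passing to a common finite retraction supporting both decompositions, is also slightly more explicit than the paper's.
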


\begin{proof}
  \begin{enumerate}
  \item  Let $V$ be a FLEI representation of $\Omega$ and let $0 \to W \to V \to X \to 0$ be a subrepresentation and a quotient. At each vertex $i$ we have  $0 \to W(i) \to V(i) \to X(i) \to 0$ so clearly $W$ and $X$ are locally finite-dimensional. If $V(a)$ is an isomorphism then $W(a)$ is injective, and $X(a)$ is surjective,  so $W(a)$ and $X(a)$ are injective/surjective for all but finitely many $a$.  Since  $\Omega$ is a finitely branched tree it is a  union of journeys  $j_1, \ldots , j_n$.  Each journey $p_l$ contains finitely many arrows with $X(a)$ not surjective, and by eventually outward only finitely many arrows that point towards the start of the journey,  so each contains an infinite oriented path $p$ in which each arrow is surjective for $X$. Since $X(s(p))$ is finite-dimensional and the dimensions of the spaces assigned to vertices in $p$ are nonincreasing, all but finitely many arrows must leave the dimension constant, and therefore for all but finitely many arrows $a$ in $p$, $X(a)$ is an isomorphism, and hence $W(a)$ is an isomorphism.  So for all but finitely many arrows $a$ in $\Omega$, $W(a)$ and $X(a)$ are isomorphisms.  Hence $W$ and $X$ are FLEI.

    \item Suppose $V_1, \ldots, V_n$ are FLEI.  Clearly their sum is locally finite-dimensional.  On the other hand since there are only finitely many arrows that are nonisomorphic for each $V_m$, there are only finitely many for which any $V_m$ are nonisomorphic, so the sum is FLEI.
    
 \item If $V$ is FLEI, it is supported on some finite subquiver $\Omega'$.  We know that locally finite-dimensional representations on a finite quiver are Krull-Schmidt, so $\FF_{\Omega'}(V)$  can be written as a finite direct sum of indecomposables and hence by Lemma~\ref{lem:equivalence of categories} so can $V\in \operatorname{rep}_{\text{FLEI}}(\Omega,\Omega')$.  By Lemma~\ref{lem:indecomposables are the same } this writes $V$ as a finite direct sum of indecomposables in $\operatorname{rep}_{\text{FLEI}}(\Omega)$ and thus in $\operatorname{rep}(\Omega).$
\end{enumerate}

\end{proof}

\begin{example}
The hypotheses on $\Omega$ in Proposition \ref{prop: flei closed under sums and quotients and krull-schmidt} are all necessary, at least for the first part (we could relax connected to having finitely many components clearly). If $\Omega$ is $A_{\infty,\infty}$ oriented so that vertices alternate between sources and sinks (not eventually outward), then the representation with all vector spaces one dimensional and all maps isomorphism is FLEI, but the subrepresentation with all sinks having dimension $1$ and all sources having dimension $0$ is not, nor is the quotient by it.  If $\Omega$ is a union of infinitely many $A_2$ copies (not connected), the representation with all vector spaces having dimension $1$ and all maps being isomorphisms is FLEI, but again but the subrepresentation with all sinks having dimension $1$ and all sources having dimension $0$ is not, nor is the quotient by it.  Finally, if $\Omega$ has infinitely many vertices all connected by an arrow to a single sink (not finitely branching), the representation  with all vector spaces having dimension $1$ and all maps being isomorphisms is FLEI, but the quotient by the subrepresentation with only the sink having positive dimension is not (reversing the arrows gives an example where the subrepresentation is not FLEI).

\end{example}

%%

%%%%%%%%%%%%%%%%%%%%%%%%%%%%%%%%%%%%%%%%%%%%%%%%%%%%%%%%%%
\section{The Partial Order of Indecomposable FLEI Representations} \label{sec: partial order}

%Steve potential changes starting 10-23-23

\begin{definition}
    Let $\Omega$ be a connected quiver and define $\operatorname{indec}_\text{FLEI}(\Omega)$ to be the set of isomorphism classes of indecomposable FLEI representations of $\Omega$. We denote such a class by $\alpha$ and we write $X \in \alpha$ if $X$ is an element of this isomorphism class.
\end{definition}

\begin{definition}\label{def: partial order on indecomposables}
  We define a relation on $\operatorname{indec}_\text{FLEI}(\Omega)$ by setting $\alpha \geq \beta$ if and only if there exists a sequence $\alpha = \gamma_1, \gamma_2, \ldots, \gamma_n = \beta$ in $\operatorname{indec}_\text{FLEI}(\Omega)$, representatives $W^{\gamma_i} \in \gamma_i$, and maps $W^{\gamma_1} \to W^{\gamma_2} \to \ldots \to W^{\gamma_n}$ which are all nontrivial: i.e. nonzero and nonisomorphic. 
\end{definition}

\begin{remark}
  We will freely abuse notation by eliding the difference between the set of indecomposables $\alpha$ and an individual representative $V \in \alpha$, saying for instance that  $\alpha$ injects into $\beta$ when for any representations $V \in \alpha$ and $W \in  \beta$ there exists an injection $V \to W$.
\end{remark}

\begin{example}
Let $U$, $V$, and $W$ be indecomposable representations of the quiver $\bullet \leftarrow \bullet$ given in the diagram below (the map in $V$ is an isomorphism). Then there are non-trivial maps $U \to V$ and $V \to W$, but no non-trivial maps $U \to W$ exist. 
\begin{center}
\begin{tikzcd}
U \arrow[d] & 1  \arrow[d] & 0 \arrow[l] \arrow[d]   \\
V \arrow[d] & 1   \arrow[d] & 1 \arrow[l]  \arrow[d]  \\
W & 0  & 1 \arrow[l]   \\
\end{tikzcd}  
\end{center}
This shows that we cannot define our partial order simply by stating ``$\alpha \leq \beta$ if there exists a non-trivial map $\beta \to \alpha$'', we must take the transitive closure of this relation.
\end{example}

\begin{lemma} \label{lem: break up non-trivial maps into compositions of simple non-trivial maps}
    If $f : U \to W$ is a non-trivial (nonzero, non-isomorphic) map of indecomposable FLEI modules which is not surjective and not injective then there exists an indecomposable FLEI module $V$, a non-trivial surjective map $g : U \to V$, and a non-trivial injective map $h : V \to W$. 
\end{lemma}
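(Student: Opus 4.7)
The plan is to factor $f$ through its image. Writing $f = h \circ g$ where $g \maps U \twoheadrightarrow \im(f)$ is the canonical surjection and $h \maps \im(f) \hookrightarrow W$ is the inclusion, $\im(f)$ is a subrepresentation of the FLEI representation $W$ (equivalently a quotient of $U$). By Proposition~\ref{prop: flei closed under sums and quotients and krull-schmidt}, $\im(f)$ is itself FLEI and decomposes as a finite direct sum of indecomposable FLEI representations $\im(f) = \bigoplus_{i=1}^n V_i$, with $n \geq 1$ since $f \neq 0$. The desired indecomposable $V$ will be one of the $V_i$, chosen so that the induced maps from $U$ and into $W$ are both non-trivial.

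For each $i$, I would set $g_i = \pi_i \circ g \maps U \to V_i$ and $h_i = h \circ \iota_i \maps V_i \to W$, where $\pi_i$ and $\iota_i$ are the projection and inclusion for the summand $V_i$. Each $g_i$ is surjective as a composition of surjections, and each $h_i$ is injective as a composition of injections. Moreover $h_i$ is non-zero (since $V_i \neq 0$) and is not surjective (since its image lies in $\im(f) \subsetneq W$, using that $f$ is not surjective), so $h_i$ is automatically non-trivial for every $i$.

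It then remains to show that at least one $g_i$ fails to be an isomorphism, from which point I can take $V = V_i$ for any such $i$. I would proceed by contradiction: if every $g_i$ were an isomorphism, then $\ker g = \bigcap_i \ker g_i = 0$ at each vertex would make $g$ an isomorphism of representations, whence $f = h \circ g$ would be injective, contradicting the hypothesis that $f$ is not injective. This yields an index $i$ for which $g_i$ is a non-trivial surjection and $h_i$ is a non-trivial injection, so $V = V_i$ completes the argument.

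The only substantive input is Proposition~\ref{prop: flei closed under sums and quotients and krull-schmidt}, which simultaneously guarantees that $\im(f)$ is FLEI and that the FLEI category is Krull-Schmidt so that the decomposition $\bigoplus V_i$ exists. Everything else is a formal manipulation of the image factorization, so the main (and essentially only) obstacle is packaged entirely into that earlier proposition.
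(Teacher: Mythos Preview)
Your proof is correct and follows essentially the same route as the paper: factor $f$ through $\im(f)$, invoke Proposition~\ref{prop: flei closed under sums and quotients and krull-schmidt} to decompose $\im(f)$ into indecomposable FLEI summands, and take $V$ to be one of them. The only cosmetic difference is that you argue by contradiction that some $g_i$ fails to be an isomorphism, whereas the paper observes directly that $\ker g_i \supseteq \ker f \neq 0$ for \emph{every} $i$, so any summand works.
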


\begin{proof} 
Since $\im f \subseteq W$ is a submodule of a FLEI module, it is FLEI by part (1) of Proposition \ref{prop: flei closed under sums and quotients and krull-schmidt}, hence by part (3) of Proposition \ref{prop: flei closed under sums and quotients and krull-schmidt}, $\im f$ is a direct sum $\bigoplus_{\alpha \in A} W_\alpha$ of indecomposable FLEI modules. Denote by $\pi_\alpha : \im f \to W_\alpha$ the projection map. Then because $f$ is nonzero, there is some $\alpha \in A$ such that $\pi_\alpha \circ f \neq 0$, and hence we let $V = W_\alpha$ and $g= \pi_\alpha \circ f : U \to W_\alpha$ and note that $g$ is surjective, nonzero, and also not an isomorphism because by hypothesis $f$ must have a kernel. Then let $h$ be the inclusion map $V = W_\alpha \to \im f$ composed with the inclusion map $\im f \to W$. We note that $h$ is nonzero (since $W_\alpha$ is nonzero), injective, and not surjective (because $\im f \neq W$ by hypothesis). 
\end{proof}

The following corollary is immediate. 

\begin{corollary}\label{cor: all inj and surj}
\
    \begin{enumerate}
    
    \item If $\alpha \geq \beta$ there exists a finite chain $\alpha = \alpha_0 \geq \alpha_1 \geq \ldots \geq \alpha_n = \beta$ such that for each $1 \leq i \leq n$, either $\alpha_i$ injects  into $\alpha_{i + 1}$ or surjects onto $\alpha_{i + 1}$.
    
    \item If $\alpha \geq \beta$, there exists $\gamma_1, \ldots, \gamma_n$ such that there are maps $\alpha \to \gamma_1 \to \ldots \to \gamma_n \to \beta$ which alternate between injective and surjective. 
    
    \end{enumerate}
\end{corollary}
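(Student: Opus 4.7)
My plan is that this corollary is essentially a direct repackaging of Lemma 4.3, so I would keep the argument short and appeal to that lemma twice, once for each part.

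For Part 1, I would unpack the definition of $\alpha \geq \beta$ to produce a finite chain of non-trivial maps $W^{\gamma_1}\to W^{\gamma_2}\to\cdots\to W^{\gamma_n}$ with $W^{\gamma_1}\in\alpha$ and $W^{\gamma_n}\in\beta$. Any map in this chain is either injective, surjective, or neither. For each map of the third kind, I would invoke Lemma 4.3 to factor it as a non-trivial surjection onto an indecomposable FLEI representation followed by a non-trivial injection. Splicing these factorizations into the chain yields a (longer) chain in which every single step is either a non-trivial injection or a non-trivial surjection between indecomposable FLEI representations, which is exactly the statement of Part 1 (after relabelling the indices so $\alpha = \alpha_0 \geq \alpha_1 \geq \cdots \geq \alpha_n = \beta$).

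For Part 2, I would start from the chain produced in Part 1 and collapse runs of consecutive arrows of the same type. Two composable non-trivial injections compose to a non-trivial injection: the composite is evidently injective, and if it were an isomorphism, the second factor would have to be surjective and hence an isomorphism, which would force the first factor to be an isomorphism too, contradicting non-triviality. A symmetric argument handles consecutive surjections. Iterating this merging process on the chain from Part 1 terminates at a strictly alternating chain of non-trivial injections and surjections between indecomposable FLEI representations, as desired.

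I do not foresee any real obstacle here; the corollary is labelled ``immediate'' for good reason. The only two points worth flagging in the write-up are (i) that the intermediate indecomposable $V$ produced by Lemma 4.3 is again FLEI, which is why the argument lives inside $\operatorname{indec}_{\text{FLEI}}(\Omega)$, and (ii) the short non-triviality check for composites of same-type arrows described above.
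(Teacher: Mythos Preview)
Your proposal is correct and is precisely the intended elaboration: the paper records no argument beyond ``The following corollary is immediate,'' and the immediacy is exactly the two-step use of Lemma~4.3 you describe (replace each map that is neither injective nor surjective by the surjection--injection pair the lemma produces, then merge adjacent maps of the same type). One small wording point: Lemma~4.3 does not literally assert that $f = h\circ g$, only that suitable non-trivial $g$ and $h$ exist; since the relation $\geq$ only requires the existence of a chain of non-trivial maps (not that their composite equals the original $f$), this is all you need, but you may wish to avoid the word ``factor.''
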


In \cite{gallup2022decompositions} we defined a partial order $\leq_{\mathcal{C}}$ on the set $\mathcal{C}(\Omega)$ of connected subquivers of any eventually outward finitely branching tree quiver $\Omega$ by letting $s >_{\mathcal{C}} t$ if one can get from $s$ to $t$ by a sequence of the following moves:
\begin{enumerate}

    \item We say that $t$ is a \emph{reduction} of $s$ if there exists an arrow $a$ in $s$ such that $t$ is the portion of $s$ behind $a$ (in terms of its orientation in $\Omega$). Thus $a$ is a boundary arrow of $t$ pointing away.
    
    \item We say that $t$ is an \emph{enhancement} of $s$ if there exists an arrow $a$ in $t$ such that $s$ is the portion of $t$ in front of $a$. Thus $a$ is a boundary arrow of $s$ pointing towards it.

\end{enumerate}

If $\Omega$ is of type $A_{\infty, \infty}$, by Proposition 7.1 of \cite{gallup2022decompositions} there is a bijection between the indecomposable FLEI representations of $\Omega$ and the set of connected components of $\Omega$ given by taking the support of a representation (the following lemma shows that the support of an indecomposable is always connected). 

\begin{lemma}\label{lem: support of indecomposable is connected}
If $\Omega$ is a tree and $V$ is an indecomposable representation of $\Omega$, the support of $V$ must be connected. 
\end{lemma}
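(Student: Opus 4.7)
The plan is to prove the contrapositive: assume the support $S := \{i \in \Omega_0 \mid V(i) \neq 0\}$ is disconnected as an induced subgraph of the underlying undirected graph of $\Omega$, and produce a nontrivial direct sum decomposition $V = W \oplus W'$.

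First I partition the support. Choose a connected component $C$ of $S$ inside the induced subgraph, and set $C' = S \setminus C$; both are nonempty, and by the very definition of connected component no edge of $\Omega$ has one endpoint in $C$ and the other in $C'$. I then define representations $W$ and $W'$ of $\Omega$ by setting $W(i) = V(i)$ for $i \in C$ and $W(i) = 0$ otherwise, and $W'(i) = V(i)$ for $i \notin C$ and $W'(i) = 0$ otherwise, with arrow maps inherited from $V$ on arrows internal to each block and set to zero on arrows crossing between blocks.

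The work is to check that $W$ and $W'$ really are subrepresentations of $V$, i.e. that every arrow crossing between the two blocks already acts as zero in $V$. Consider an arrow $a$ with source in $C$ and target outside $C$: either the target lies in $C'$, which contradicts the separation of $C$ from $C'$ by no edge, or the target lies outside $S$, in which case $V$ vanishes there and $V(a) = 0$ automatically; arrows with source outside $C$ and target in $C$ are handled symmetrically. Hence $V = W \oplus W'$, and since $C$ and $C'$ each contain a vertex on which $V$ is nonzero, both summands are nontrivial, contradicting the indecomposability of $V$.

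There is no real obstacle: the only content is the combinatorial observation that vertices in distinct connected components of the induced subgraph on $S$ share no edge, so the obvious candidate decomposition automatically respects the arrow action. I note as an aside that the tree hypothesis on $\Omega$ is not actually used here; it is simply the context in which the lemma will be invoked in the later sections on $D_\infty$.
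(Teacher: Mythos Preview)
Your proof is correct. It also differs from the paper's argument in a mildly interesting way: the paper picks a vertex $i$ with $V(i)=0$ and then uses the tree hypothesis to split $\Omega\setminus\{i\}$ into the branches hanging off the neighbours of $i$, decomposing $V$ accordingly. You instead split directly along a connected component of the support and check that every arrow leaving that component already has zero source or zero target in $V$. Your route is both shorter and strictly more general---as you observe, it never invokes the tree hypothesis, so the lemma in fact holds for arbitrary quivers. The paper's version, by contrast, genuinely leans on the tree structure to make the ``branches through each neighbour of $i$'' well-defined, which is unnecessary for the conclusion.
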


\begin{proof}
If $\Omega$ is a tree and the support of $W$ is disconnected, then $W(i) = 0$ for some vertex $i$ of $\Omega$. In this case, $W = \bigoplus_{j} W^j$ where $j$ ranges over all vertices which are neighbors of $i$, and for each such $j$, $W^j$ is the subrepresentation of $W$ which agrees with $W$ for all vertices connected to $j$ not through $i$ (this is well-defined because $\Omega$ is a tree), and is zero elsewhere. 
\end{proof}

Then we have the relation $\leq$ on $\operatorname{indec}_\text{FLEI}(\Omega)$ and we have the partial order $\leq_\mathcal{C}$ on $\mathcal{C}(\Omega)$. We now show that these are in fact the same relation:

\begin{lemma}\label{lem: the two relations on type A are the same}
       Let $\Omega$ a finitely branching tree quiver. If $\alpha$ and $\beta$ represent thin indecomposable (necessarily FLEI) representations of $\Omega$ with supports $s_\alpha$ and $s_\beta$ then $\alpha > \beta$ if and only if $s_\alpha >_{\mathcal{C}} s_\beta$.  
\end{lemma}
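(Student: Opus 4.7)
I will prove the two implications separately, in each case exploiting that $\Omega$ is a tree, so any two connected subquivers $s \subseteq t$ are joined across the tree by a finite set of bridge arrows, each uniquely separating a connected component of $t \setminus s$ from $s$.

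For the $(\Leftarrow)$ direction, suppose $s_\alpha >_{\mathcal{C}} s_\beta$, so we have a finite sequence $s_\alpha = s_0, s_1, \ldots, s_n = s_\beta$ in which each step is a basic reduction or enhancement. Let $V^{s_i}$ denote the unique (up to isomorphism) thin indecomposable representation with support $s_i$. If $s_{i+1}$ is a reduction of $s_i$ via the outward-pointing boundary arrow $a$, I define a morphism $V^{s_i} \to V^{s_{i+1}}$ as the identity at vertices of $s_{i+1}$ and zero elsewhere. Arrow compatibility is automatic on arrows internal to $s_{i+1}$ or to $s_i \setminus s_{i+1}$, and since $\Omega$ is a tree the only arrow straddling these two sets is $a$, whose outward orientation makes the remaining commutative square hold. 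The map is a nontrivial surjection, and the enhancement case gives a dual nontrivial injection. Concatenating these across $i$ produces the chain witnessing $\alpha > \beta$.

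For the $(\Rightarrow)$ direction, suppose $\alpha > \beta$ via a chain $\alpha = \gamma_0 \to \cdots \to \gamma_n = \beta$ of nontrivial maps in $\operatorname{indec}_{\text{FLEI}}(\Omega)$. Running the $(\Leftarrow)$ arrow-compatibility analysis in reverse yields the following classification of thin-to-thin maps: a nontrivial injection $V^s \hookrightarrow V^t$ of thin indecomposables exists iff $s \subsetneq t$ and every boundary arrow between $s$ and $t \setminus s$ points into $s$, which is equivalent to $s >_{\mathcal{C}} t$ via a sequence of single-arrow enhancements (one per connected component of $t \setminus s$), and dually for surjections. If every $\gamma_i$ is thin---automatic when $\Omega$ is $A_\infty$ or $A_{\infty, \infty}$---stringing these support relations together gives $s_\alpha >_{\mathcal{C}} s_\beta$. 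To accommodate non-thin intermediates (possible when $\Omega$ contains a $D_\infty$ shape, by Corollary~\ref{cor: flei indecomposables of D infinity}), I refine each step: for any nontrivial map $f \colon U \to V$ with $U$ thin indecomposable, factor $f = \iota \circ \pi$ through $\im f$, which is thin as a quotient of $U$; by Proposition~\ref{prop: flei closed under sums and quotients and krull-schmidt}, $\im f$ decomposes into thin indecomposable summands $\bigoplus_k U_k$, one of which receives a nontrivial surjection from $U$ since $U$ is indecomposable. This produces a thin bridge $U \twoheadrightarrow U_k \hookrightarrow V$ to splice into the chain.

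\textbf{Main obstacle:} Iterating the refinement requires selecting, at each stage, a thin bridge $U_k$ whose composition with the next map $\gamma_{i+1} \to \gamma_{i+2}$ remains nontrivial; a naively-chosen $U_k$ may have its image collapse to zero. Moreover, two thin indecomposables $\alpha > \beta$ can be linked only through non-thin intermediates with no direct nontrivial map between them---for instance when the image of an injection $\alpha \hookrightarrow \delta$ into a non-thin $D_\infty$ indecomposable lies inside the kernel of a map in $\delta$ that a subsequent surjection out of $\delta$ collapses. The resolution uses the explicit form of non-thin indecomposables in Corollary~\ref{cor: flei indecomposables of D infinity} together with the finite-branching-tree hypothesis to show that any such indirect chain can be replaced by a sequence of reductions and enhancements tracing the unique tree path from $s_\alpha$ to $s_\beta$.
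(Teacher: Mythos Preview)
Your $(\Leftarrow)$ direction is essentially identical to the paper's.

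For $(\Rightarrow)$, the paper's argument is considerably shorter than yours: after invoking Corollary~\ref{cor: all inj and surj} to assume each link is an injection or a surjection, it simply observes that a nontrivial surjection between thin indecomposables forces $s_\beta \subsetneq s_\alpha$ with all bridging boundary arrows pointing outward (hence a finite sequence of reductions), and dually for injections; transitivity of $>_{\mathcal{C}}$ then finishes. The paper does \emph{not} address the possibility of non-thin intermediate $\gamma_i$ at all.

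You are right that this is a genuine subtlety relative to the stated generality of the lemma, and the paper's proof is silent on it. In the paper's actual application (Proposition~\ref{prop: no infinite sequence of non-trivial maps}), the issue is moot: Lemma~\ref{lm: ordering among types} first forces all elements of the chain to share the same type, so when that type is not~III every $\gamma_i$ is thin and the single-step analysis suffices.

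Your attempted resolution of the non-thin case, however, is not a proof. You correctly identify the obstacle (a chosen thin summand $U_k$ of $\im f$ may compose to zero with the next map), but your final paragraph merely asserts that Corollary~\ref{cor: flei indecomposables of D infinity} plus finite branching resolves it. Corollary~\ref{cor: flei indecomposables of D infinity} is stated only for $D_\infty$, whereas the lemma is for arbitrary finitely branching tree quivers; and even granting that classification, you give no mechanism for routing around a non-thin $\gamma_i$ while keeping subsequent composites nontrivial. Either restrict your claim to chains of thin intermediates (which is all the paper needs and all the paper proves), or supply an actual argument for the general case.
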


\begin{proof}
If $s_\beta$ is obtained from $s_\alpha$ by a reduction then there is a surjective map $\alpha \to \beta$ and if $s_\beta$ is obtained from $s_\alpha$ by an enhancement then there is an injection $\alpha \to \beta$. Hence $s_\alpha >_{\mathcal{C}} s_\beta$ implies $\alpha > \beta$. 

%Hence if $\alpha >_\mathcal{C} \beta$ then there is a sequence $\alpha = \gamma_1, \gamma_2, \ldots, \gamma_n = \beta$ of connected components such that $\gamma_{i + 1}$ is obtained from $\gamma_i$ by either a reduction or enhancement, so there is a sequence of thin indecomposable representations $V = U_1, U_2, \ldots, U_n = W$ and nontrivial maps $U_i \to U_{i+1}$ which are surjective in the former case and injective in the latter. Therefore $V > W$. 

Conversely, if there is a nontrivial surjective map $\alpha \to \beta$ then $s_\beta$ is a strict subset of $s_\alpha$ and any boundary arrows of $s_\beta$ which are in $s_\alpha$ must point away from $s_\beta$. Since $\Omega$ is a finitely branching tree quiver, only finitely many such boundary arrows can exist, hence $s_\beta$ can be obtained from $s_\alpha$ by a finite sequence of reductions. Similarly if there is a nontrivial injective map $\alpha \to \beta$ then $s_\beta$ can be obtained from $s_\alpha$ by a nontrivial sequence of enhancements. Hence $\alpha > \beta$ implies $s_\alpha >_{\mathcal{C}} s_\beta$. 
\end{proof}

The goal of the rest of this section is to prove that the relation $\leq$ on $\operatorname{indec}_\text{FLEI}(\Omega)$ is a well-founded partial order in the case when $\Omega$ is the mountain $D_\infty$ quiver (i.e. with all arrows pointed outward from the two leaves) shown in Figure \ref{fig: mountain}.

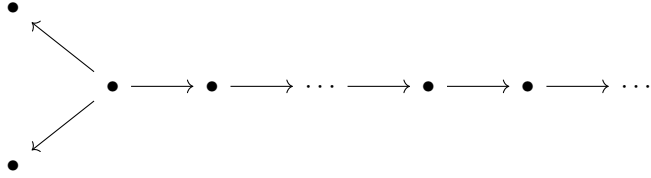
\begin{figure}
\begin{center}
 \begin{tikzcd}
\bullet  & & & &   \\
& \bullet \arrow[lu] \arrow[ld] \arrow[r] & \bullet \arrow[r] & \cdots \arrow[r] & \bullet \arrow[r] & \bullet \arrow[r]  & \cdots \\
\bullet & & & &  
\end{tikzcd}
\end{center}
\caption{The Mountain $D_\infty$ Quiver}
\label{fig: mountain}
\end{figure}

Next section we will use this to prove the same result for any eventually outward $D_\infty$ quiver.  It will first be helpful to adopt the convention that if $\alpha \in \operatorname{indec}_\text{FLEI}(\Omega)$ and $i$ is a vertex then $\alpha_i$ is the dimension of $W(i)$ where $W \in \operatorname{indec}_\text{FLEI}(\Omega)$ (necessarily independent of $W$).

We begin with a characterization of the types of maps in indecomposable representations of the $D_\infty$ mountain. 

\begin{lemma} \label{lem:  surjective or injective paths}
If  $V$ is an indecomposable representation of the $D_\infty$ mountain, and $p$ is a directed path, then the composition $V(p)$ of all maps in this path is either injective or surjective or both, and therefore of maximal rank. If the vector space on the trivalent vertex $k$ is two-dimensional, then the kernels of two otherwise non-intersecting paths of rank $1$ starting at $k$ have zero intersection. 
\end{lemma}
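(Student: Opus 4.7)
The plan is to leverage Corollary~\ref{cor: flei indecomposables of D infinity}, which tells us every (FLEI) indecomposable representation of $D_\infty$ is either \emph{thin}, with $\dim V(i) \in \{0, 1\}$ at every vertex, or has the \emph{fat} form with dimension $2$ along a segment of the long arm starting at the trivalent vertex $k$, dimension $1$ at each leaf, and $1$'s (possibly followed by $0$'s) continuing down the arm. I would prove both parts of the lemma by splitting into these two cases.

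For part~1 in the thin case, every arrow map is either the zero map or the identity on a one-dimensional space, so any composition $V(p)$ is either $0$ or the identity — both automatically injective or surjective (possibly vacuously, when a dimension is zero). For part~1 in the fat case, I would first use indecomposability to pin down the form of each arrow map: the maps $V(k) \to V(\ell_j)$ must be surjective (else their kernel would split off as a direct summand), arrows inside the dim-$2$ segment of the arm must be isomorphisms, the arrow from the dim-$2$ segment into the dim-$1$ segment is surjective, arrows within the dim-$1$ segment are isomorphisms, and any arrow into a dim-$0$ portion is zero. Because the dimension sequence along any directed path in the mountain is therefore weakly decreasing, $V(p)$ is a composition of isomorphisms and surjections onto lower-dimensional targets, hence surjective onto its final codomain and of maximal rank.

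For part~2, the hypothesis $\dim V(k) = 2$ forces us into the fat case. The rank-$1$ directed paths starting at $k$ come in two flavors: (i) the two single-arrow paths $k \to \ell_j$ for $j = 1, 2$, and (ii) any arm path $k \to u_1 \to \cdots \to u_j$ with $j$ strictly past the $2 \to 1$ drop in the arm's dimensions. Any two paths of flavor~(ii) share a long initial segment and so are not ``otherwise non-intersecting''; the hypothesis thus picks out only pairs of shape (i,i) or (i,ii). The three relevant rank-$1$ path-kernels are a priori three one-dimensional subspaces of $V(k) \cong \mathbb{F}^2$, and I plan to show they are pairwise distinct — which is equivalent to the claim that any two have zero intersection. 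If two of them coincided at a common line $L$, I would pick a complement $L' \subset V(k)$ (taking $L'$ to be the third rank-$1$ kernel whenever it differs from $L$, and any complement otherwise) and propagate the splitting $V(k) = L \oplus L'$ along the quiver. Since every other arrow is an iso, a surjection, or zero, $L$ and $L'$ would track to complementary subspaces at each downstream vertex, with $L$ killed precisely by the arrow realizing $L$ as a path-kernel and $L'$ surviving past that arrow — yielding a nontrivial direct sum decomposition $V = V_L \oplus V_{L'}$, contradicting indecomposability.

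The main obstacle I anticipate lies in the propagation argument in part~2: one must carefully verify that exactly $L$'s image (and nothing more) lies in the kernel of the critical drop-arrow responsible for $L$ being a path-kernel, while $L'$'s image bypasses it. This requires tracing both lines through the isomorphism portion of the arm and confirming that the splitting remains compatible with the single surjective drop-arrow. A secondary but more routine obstacle is deducing the specific forms of the individual arrow maps in the fat case purely from indecomposability, which amounts to splitting off the kernels or cokernels of hypothetical non-max-rank maps and verifying that the resulting summands are nontrivial.
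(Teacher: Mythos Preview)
Your proposal is correct and takes essentially the same approach as the paper: both split into the thin and fat cases via the classification of indecomposables, argue that each arrow map has maximal rank from indecomposability (so compositions along directed paths are surjective in the mountain orientation), and for the second part both observe that if two of the three rank-$1$ kernels at $k$ coincide at a line $L$, then $L$ together with a suitable complement (the third kernel when distinct, any complement otherwise) propagates to a nontrivial direct-sum decomposition. The only differences are cosmetic: you invoke Corollary~\ref{cor: flei indecomposables of D infinity} explicitly where the paper uses it implicitly, and you spell out the propagation step where the paper is terse and refers to \cite{ringel2016} for details.
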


\begin{proof}
If $V$ is thin, i.e. all vector spaces are zero or one-dimensional, it suffices to prove that the map associated to every arrow between one-dimensional spaces is nonzero, which is a direct consequence of indecomposability. If $V$ is two-dimensional at vertex $1$, the same argument implies all the arrows get sent to maps of rank at least $1$.  If any arrow between a pair of two-dimensional spaces is rank $1$, we can split its image from its codomain, and carry that decomposition forward through each arrow to give a nontrivial decomposition, so all those arrows are rank $2$. 

Suppose the dimension of $V$ at the trivalent vertex $k$ is $2$ and that there are two paths $p$ and $q$ with $V(p)$ and $V(q)$ rank $1$, which do not overlap except at $k$ with kernels that intersect nontrivially and therefore are equal. Either this intersection is also the kernel of a rank $1$ path in the third direction, in which case it and any complement in $V(k)$ generate direct summands, of this intersection is independent from the kernel of said third path, in which case it and that kernel generate direct summands. See e.g. \cite{ringel2016} for a detailed analysis.

%If the vector spaces at $0$ and $0'$ get mapped to the same one-dimensional subspace, then again the image space splits and this splitting can be pushed forward through the arrows to a decomposition of $V$.  Finally, if $p$ is a path from  either $V(0)$ or $V(0')$ to $V(i)$ with $\dim(V(i))=1$ and $V(p)=0$ then the collection of all images under paths of $V(0)$ and the collection of all images under paths  of $V(0')$ is a nontrivial decomposition, so therefore $V(p)$ is rank $1$.  We can model this representation as sending all two-dimensional vertices to $\mathbb{F}^2$ with the identity maps in between, sending $V(0)$ to the $x$-axis and $V(0')$ to the $y$ axis in $\mathbb{F}^2$, and all later on- dimensional vertices to the quotient of $\mathbb{F}^2$ by the diagonal $x=y$ line.
\end{proof}

Now we categorize the indecomposable representations of the $D_\infty$ mountain.

\begin{definition}
  Let $k$ be the trivalent vertex of $D_\infty$, and for $\alpha \in \text{indec}_\text{FLEI}(D_\infty)$ let $\alpha_i$ be the dimension of any representation in $\alpha$ at $i$. An indecomposable FLEI representation on the $D_\infty$ quiver is of \emph{type}
  \begin{itemize}
  \item[I] if $\alpha_k=0$
  \item[II] if $\alpha_k=1$ and $\alpha_j=1$ for all $j$ adjacent to $k$
  \item[III (a)] if $\alpha_k=1$ and $\alpha_j=1$ for exactly two $j$'s adjacent to $k$
  \item[III (b)] if $\alpha_k=2$
  \item[IV] if $\alpha_k=1$ and $\alpha_j=1$ for exactly one $j$ adjacent to $k$
  \item[V] if $\alpha_k=1$ and $\alpha_j=0$ for all $j$ adjacent to $k$
  \end{itemize}
  By Corollary \ref{cor: flei indecomposables of D infinity}, all indecomposable representation of $D_\infty$ are of one of these types.  
\end{definition}

\begin{lemma}\label{lm: ordering among types}
    If there is a nontrivial map from $\alpha$ to $\beta$ which is either injective or surjective, then $\alpha$ has type less than or equal to $\beta$. Therefore in an infinite chain of nontrivial maps between indecomposables, eventually all the indecomposables are of the same type.  
\end{lemma}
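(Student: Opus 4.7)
The plan is to verify the first assertion by a finite case analysis over the six types and then derive the second assertion from the pigeonhole principle on a six-element set.

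Two observations drive the case analysis. A nontrivial injection $f\maps \alpha\to\beta$ forces $\alpha_i\leq\beta_i$ at every vertex, and a nontrivial surjection forces the reverse inequality; this alone eliminates many type pairs (for instance, type III(b) has $\alpha_k=2$, so it never injects into any of I, II, III(a), IV, V). When dimensions are inconclusive, the commutativity $\beta(a)\circ f_k = f_{t(a)}\circ\alpha(a)$ at each of the three arrows $a$ out of the trivalent vertex $k$ does the rest: if $\alpha_j=0$ while $\beta_j\neq 0$ then $\operatorname{im}(f_k)$ must lie in $\ker\beta(a)$, and dually for surjections. The essential geometric input, invoked whenever one representation is thick, is Lemma \ref{lem:  surjective or injective paths}: in a thick representation the kernels of the three outgoing rank-one paths from $k$ are pairwise transverse $1$-dimensional subspaces of the $2$-dimensional space $V(k)$.

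I would then walk through the types in the listed order. Type I only surjects onto type I (since $\alpha_k=0$ forces $\beta_k=0$) but can inject into II, III(a), III(b), or IV with $f_k=0$. Type II surjects onto III(a), IV, and V by killing one, two, or three of its arms. Type III(a) injects into III(b) by choosing $\operatorname{im}(f_k)$ equal to the kernel of the direction absent from $\alpha$, and surjects onto IV and V. Type III(b) surjects onto IV and V but cannot surject onto II, III(a), or I, because compatibility at two arrows out of $k$ with $\beta_j\neq 0$ would force two distinct transverse kernel lines in $\alpha(k)$ to coincide. Type IV maps only into IV or V, since any target with $\beta_j\neq 0$ on either of IV's zero-support arms would force $\operatorname{im}(f_k)$ into an intersection of two distinct kernel lines, which is $0$. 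Finally, type V has no nontrivial outgoing map at all. The delicate step throughout is the thick case, which rests on the transversality lemma; the remaining pairs reduce to dimension checks or automatic commutativity.

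For the second statement, given an infinite chain of nontrivial maps $\alpha_0\to\alpha_1\to\cdots$, I would apply Lemma \ref{lem: break up non-trivial maps into compositions of simple non-trivial maps} to each map that is neither injective nor surjective, inserting an intermediate FLEI indecomposable and producing a refined chain in which every map is a nontrivial injection or surjection. By the first part the corresponding sequence of types is nondecreasing in a totally ordered six-element set, so it must eventually be constant.
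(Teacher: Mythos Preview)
Your overall strategy---case analysis driven by dimension comparisons at each vertex, commutativity at the three arrows out of the trivalent vertex $k$, and the transversality of kernel lines from Lemma~\ref{lem:  surjective or injective paths}---is the same as the paper's.  The second assertion is also handled the same way (factor each map via Lemma~\ref{lem: break up non-trivial maps into compositions of simple non-trivial maps}, then observe that a nondecreasing sequence in a finite ordered set stabilizes).

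There is, however, a genuine gap in your case analysis: you do not rule out nontrivial surjections from types II, III(a), III(b), or IV onto type~I.  Your commutativity tool reads ``if $\alpha_j=0$ while $\beta_j\neq 0$ then $\operatorname{im}(f_k)\subset\ker\beta(a)$,'' but when $\beta$ is of type~I we have $\beta_k=0$ and $\beta_j=0$ for \emph{every} neighbour $j$ of $k$, so this hypothesis never fires and the three arrows out of $k$ impose no constraint whatsoever on $f_k$ (which is already the zero map $\alpha_k\to 0$).  The dimension inequalities do not help either, since $\alpha_i\geq\beta_i$ is perfectly compatible with a surjection.  The obstruction lives farther out on the long arm: because $f_k=0$ and each $\alpha(a)$ along the outward path is surjective, commutativity propagates $f_{v_1}=0$, $f_{v_2}=0$, and so on through the entire (connected) support of $\alpha$, forcing $f=0$.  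Equivalently, one locates the first arrow $i\to j$ on the path from $k$ into $\beta$'s support where $\beta_i=0$ and $\beta_j=1$, and derives a contradiction there.  This is exactly the paper's Case~1, and it is the one argument in the proof that is not local to $k$; without it the lemma is not established.

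A minor point: the types are not a totally ordered six-element set, since III(a) and III(b) sit at the same level (both ``type III'').  Your claim that III(b) cannot surject onto III(a) is stronger than what the lemma asserts and is handled separately in the paper (Lemma~\ref{lem: types iii a and iii b mixed}); for the present lemma you only need type$(\alpha)\leq$ type$(\beta)$ in the five-level order $\mathrm{I}<\mathrm{II}<\mathrm{III}<\mathrm{IV}<\mathrm{V}$.
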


\begin{proof}
We consider the different possible types of $\beta$ as different cases. 

Case 1: Suppose that $\beta$ is of Type I. We claim that $\alpha$ must also be of Type I. If not then $\alpha_k \neq 0$. Since there is a nonzero map $\alpha \to \beta$, the supports of $\alpha$ and $\beta$ must intersect, therefore there exists some vertex $\ell$ where $\alpha_\ell \neq 0$ and $\beta_\ell \neq 0$. But the support of $\alpha$ is connected, so there is a directed path from $k$ to $\ell$ contained in the support of $\alpha$. Because $\beta_k = 0$ there must be an arrow $i \to j$ along this path with $\alpha_i , \alpha_j \neq 0$, $\beta_i=0$ and $\beta_j=1$. Hence we obtain the following diagram for representatives $V$ and $W$ of $\alpha$ and $\beta$ respectively.

\begin{equation}\label{eq: first diagram}
\begin{tikzcd}
V_j \neq 0 \arrow[d]  & V_i \neq 0 \arrow[l]  \arrow[d] & 
\\ W_j \cong \mathbb{F} & W_i = 0 \arrow[l] 
\end{tikzcd}
\end{equation}

Since the morphism $V \to W$ is either injective or surjective and $\beta_i = 0$, the map $V_i \to W_i$ must in fact be surjective. Since the map $V_i \to V_j$ is nonzero, this diagram cannot commute, contradiction. 

Case 2: Suppose that $\beta$ is of Type II. We claim that $\alpha$ cannot have $\alpha_k = 2$ and also cannot have $\alpha_j = 0$ for any $j$ adjacent to $k$ (this will show that $\alpha$ cannot be of Types III (a), III(b), IV, or V). The latter creates the following diagram

\begin{equation}\label{eq: one to zero bottom iso}
\begin{tikzcd}
V_j = 0 \arrow[d]  & V_i \cong \mathbb{F} \arrow[l]  \arrow[d] & 
\\ W_j \neq 0 & W_i \neq 0 \arrow[l, "\sim" '] 
\end{tikzcd}
\end{equation}

with $k = i$ which is impossible because the down arrows are both injections and the bottom arrow is an isomorphism hence $V_i \to W_i \to W_j$ is nonzero. In the former case, if $i$ and $j$ are the two vertices on the short arms, we obtain a diagram of the form 

\begin{equation}\label{eq: second diagram}
\begin{tikzcd}
V_i \cong \mathbb{F} \arrow[d]  & V_k \cong \mathbb{F}^2 \arrow[l]  \arrow[d] \arrow[r] & V_j \cong \mathbb{F} \arrow[d]  
\\ W_i \cong \mathbb{F} & W_k \cong \mathbb{F} \arrow[l] \arrow[r]  & W_j \cong \mathbb{F}
\end{tikzcd}
\end{equation}

where all maps are surjective and the outside and bottom maps are isomorphisms. Therefore the kernel of the map $V_k \to W_k$ must be equal to the kernel of the map $V_k \to V_j$ and to the kernel of the map $V_k \to V_i$. But these kernels are transverse lines in $\mathbb{F}$ and in particular are not equal so we obtain a contradiction. 

Case 3: Suppose that $\beta$ is of Type III (a), i.e. that $\beta_k=1$ and $\beta_j=1$ for exactly two $j$'s adjacent to $k$. If $\alpha$ has Type IV or V then this again creates Diagram \ref{eq: first diagram} with $V_i \to W_j \to W_j$ nonzero which is impossible.

Case 4: Now suppose that $\beta$ is of Type III (b), i.e. that $\beta_k = 2$. If $\alpha$ is of Type IV or V then there are two vertices $i$ and $j$ adjacent to $k$ with $a_i = a_j = 0$. Hence we obtain a diagram of the following form. 

\begin{equation}\label{eq: two zeros mapping to thick}
\begin{tikzcd}
V_i = 0 \arrow[d]  & V_k \cong \mathbb{F} \arrow[l]  \arrow[d] \arrow[r] & V_j = 0 \arrow[d]  
\\ W_i \cong \mathbb{F} & W_k \cong \mathbb{F}^2 \arrow[l] \arrow[r]  & W_j \cong \mathbb{F}
\end{tikzcd}
\end{equation}

Hence the image of the map $V_k \to W_k$ must be equal both to the kernel of the map $W_k \to W_i$ and to that of $W_k \to W_j$. Again these kernels are in fact transverse lines so this is impossible.  

Case 5: Suppose that $\beta$ is of Type IV. If $\alpha$ is of Type V, this again creates Diagram \ref{eq: first diagram} with $V_i \to W_j \to W_j$ nonzero which is impossible.

%If $\beta$ is of type I, then there is an arrow $i \to j$ with $\beta_j=1$ and $\beta_i=0$.  If $\alpha$ is type II or higher, then $\alpha_i>0$, and the map from $i$ to $j$ is surjective in $\alpha$.  Therefore the  map from $\alpha_j$ to $\beta_j$ must be trivial, so the map from $\alpha$ to $\beta$ must be trivial.  If $\alpha$ is of type IV, then $\alpha_k=1$ and  $\alpha_j=1$ for all $j$ adjacent to $k$. If $\alpha$ and $\beta$ are both of type II,III,V or VI, that is if $k$ has dimension $1$, then a nontrivial map between them must be an isomorphism on $k$, and therefore if $\alpha_j=1$ then $\beta_j=1$ for any adjacent $k$, and thus the type of $\alpha$ must be less than the type of $\beta$.  If $\beta$ is of type IV and $\alpha$ is neither I nor IV, then the map between $\alpha$ and $\beta$ is an injection, and because in the two dimensional space $\beta$ assigns to $k$ the kernels of the three arrow maps are transverse lines, any vector in that space is in the kernel of at most one of the maps, and therefore $\alpha$ is of type II or III.  Similarly if $\alpha$ is of type IV, and $\beta$ has $\beta_k=1$, then the map between $\alpha$ and $\beta$ is an surjection,  and because in the two dimensional space $\beta$ assigns to $k$ the kernels of the three arrow maps are transverse lines, any one-dimensional quotient identifies the remaining one-dimensional space with at least two of the kernels, and therefore $\beta$ is of type V or VI.

\end{proof}

If $\alpha$ is an isomorphism class of indecomposable representations of a $D_\infty$ quiver we define $T(\alpha)$ to be the set of vertices $i$ with $\alpha_i = 2$, and we define $L(\alpha)$ to be the set of vertices $i$ on the long arm with $\alpha_i \neq 0$. 

\begin{lemma}\label{lem: types iii a and iii b mixed}
\
\begin{enumerate}
    \item If $\alpha$ is of Type III(a) and $\beta$ is of Type III(b) and there is an injection or surjection $\alpha \to \beta$ then $L(\beta) \subseteq L(\alpha)$ and in particular $T(\beta) \subsetneq L(\alpha)$.  
    \item If $\alpha$ is of type III(b) and $\beta$ is of Type III(a) and there is an injection or surjection $\alpha \to \beta$ then $L(\beta) \subseteq T(\alpha)$ and in particular $L(\beta) \subsetneq L(\alpha)$.  
\end{enumerate}
\end{lemma}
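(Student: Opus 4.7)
The plan is to leverage the fact that $\alpha_k \neq \beta_k$ to pin down whether the given map must be an injection or a surjection, and then to extract information from commutativity at the trivalent vertex using a three-distinct-lines indecomposability criterion for Type III(b) representations. I would first establish this criterion: if $W$ is an indecomposable Type III(b) representation with $T(W) = \{k, k_1, \ldots, k_r\}$ and $L(W) = \{k, k_1, \ldots, k_{r+s}\}$, then $s \geq 1$ (the representation with dimension vector $(1,1,2,0,\ldots)$ decomposes) and the three lines in $W_k \cong \mathbb{F}^2$ given by $L_1 = \im(W(s_1 \to k))$, $L_2 = \im(W(s_2 \to k))$, and $K = W(k \to k_r)^{-1}(\ker W(k_r \to k_{r+1}))$ (well-defined as a line since $W(k \to k_r)$ is an isomorphism by Lemma~\ref{lem:  surjective or injective paths}) are pairwise distinct; otherwise, a direct summand can be split off by choosing a suitable complement at $k$ and propagating it along the long arm through the isomorphisms.

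For Part (1), since $\alpha_k = 1 < 2 = \beta_k$ no surjection can exist, so the hypothesis gives an injection $f\maps V \to W$. If $\alpha$ has both short-arm values $\alpha_{s_1} = \alpha_{s_2} = 1$, commutativity at the two short-arm arrows forces $\im(f_k) = L_1 = L_2$, contradicting distinctness. Thus $\alpha$ is Type III(a) with one short arm zero, say $\alpha_{s_1} = 1$, $\alpha_{s_2} = 0$, with $L(\alpha) = \{k, k_1, \ldots, k_m\}$; then $\im(f_k) = L_1$ and, propagating through isomorphisms on the long arm, $\im(f_{k_j})$ is identified with $L_1$ wherever $V$ is one-dimensional. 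Commutativity at the boundary arrow $k_m \to k_{m+1}$ (where $V$ drops to zero) requires $\im(f_{k_m}) \subseteq \ker W(k_m \to k_{m+1})$, and a case analysis on $m$ versus $r$ and $r+s$ leaves only the case $m = r+s$: when $m < r$ or $r < m < r + s$, the relevant arrow of $W$ is an isomorphism with zero kernel; when $m = r$, one is forced into $L_1 = K$; and when $m > r + s$ one would need an injection $\mathbb{F} \to 0$ at $k_{r+s+1}$. Hence $L(\alpha) = L(\beta)$, so $L(\beta) \subseteq L(\alpha)$, and $T(\beta) \subsetneq L(\beta) = L(\alpha)$ follows from $s \geq 1$.

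For Part (2), $\alpha_k = 2 > 1 = \beta_k$ rules out injection, so there is a surjection $g\maps V \to W$. If $\beta$ has both short arms nonzero, then $L(\beta) = \{k\}$ and the conclusion is immediate; otherwise say $\beta_{s_2} = 0$, and commutativity at $s_2 \to k$ with $W_{s_2} = 0$ forces $L_2^\alpha \subseteq \ker g_k$, giving $\ker g_k = L_2^\alpha$ by dimension count. I would then propagate this along the long arm (using that the long-arm arrows of $W$ are isomorphisms throughout $L(\beta)$ and those of $V$ are isomorphisms throughout $T(\alpha)$), obtaining that $\ker g_{k_j}$ is identified with $L_2^\alpha$ at every $k_j$ where $V$ is two-dimensional. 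Supposing for contradiction that $M := \max L(\beta) > r$, the rank-drop arrow $k_r \to k_{r+1}$ in $V$ (with kernel $K^\alpha$) paired with the isomorphism arrow $k_r \to k_{r+1}$ in $W$ yields a commutativity equation whose two sides are rank-$1$ maps $V_{k_r} \to W_{k_{r+1}}$ with kernels $K^\alpha$ and $L_2^\alpha$ respectively; their required equality contradicts pairwise distinctness. Hence $M \leq r$, giving $L(\beta) \subseteq T(\alpha)$, and $L(\beta) \subsetneq L(\alpha)$ follows from $T(\alpha) \subsetneq L(\alpha)$.

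The primary obstacle is the bookkeeping of indices relative to $T$ and $L$ for both $\alpha$ and $\beta$, and the careful propagation of the single one-dimensional invariant (either $\im f_k$ or $\ker g_k$) along the long arm through the various isomorphism segments and the single rank-drop arrow; once these are handled, the contradictions all reduce to the clean linear-algebra statement that $L_1$, $L_2$, $K$ in $\mathbb{F}^2$ are pairwise distinct.
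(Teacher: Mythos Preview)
Your proof reverses the orientation of the short-arm arrows: in the mountain orientation (Figure~\ref{fig: mountain}) the trivalent vertex $k$ is a \emph{source}, so the relevant maps are $W(k\to s_i)$, not $W(s_i\to k)$, and the three distinguished lines in $W_k\cong\mathbb{F}^2$ are the \emph{kernels} $K_i=\ker W(k\to s_i)$ together with the pullback of $\ker W(k_r\to k_{r+1})$. This is precisely the set-up of Lemma~\ref{lem:  surjective or injective paths} and of Diagrams~\eqref{eq: second diagram} and~\eqref{eq: two zeros mapping to thick} in the paper.

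The error is not merely notational. In Part~(1), your treatment of the case $\alpha_{s_1}=\alpha_{s_2}=1$ claims that commutativity at the short arms forces $\im(f_k)=L_1=L_2$; with the correct orientation, commutativity at $k\to s_i$ (where $V_{s_i},W_{s_i}$ are one-dimensional and $f_{s_i}$ is an isomorphism) yields only $\im(f_k)\not\subseteq K_i$, and no contradiction follows. Indeed an injection exists in this sub-case: for $\alpha=(1,1,1,0,\ldots)$ and $\beta=(1,1,2,1,0,\ldots)$ one may send $V_k$ onto the third line $\ker W(k\to k_1)$, and all squares commute. So this is a genuine gap. (The paper's proof, which reduces Diagram~\eqref{eq: one to zero two to one} to Diagram~\eqref{eq: two zeros mapping to thick}, also requires \emph{two} zero-directions out of $k$ for $\alpha$ and thus does not visibly cover this sub-case either.) Away from this sub-case your argument, once the orientation is corrected and images are replaced by kernels (with the leaf labels swapped in Part~(2)), is essentially the paper's: the paper packages the same three-lines constraint as back-references to Diagrams~\eqref{eq: one to zero bottom iso}, \eqref{eq: second diagram}, and~\eqref{eq: two zeros mapping to thick}, while you unfold the criterion and the case analysis on $m$ versus $r$ and $r+s$ explicitly.
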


\begin{proof}
\

\begin{enumerate}
\item If this is false, there must exist a diagram on the long arm of the following form

\begin{equation}\label{eq: one to zero two to one}
\begin{tikzcd}
V_j = 0 \arrow[d]  & V_i \cong \mathbb{F} \arrow[l]  \arrow[d] & 
\\ W_j \cong \mathbb{F} & W_i \cong \mathbb{F}^2 \arrow[l] 
\end{tikzcd}
\end{equation}

or Diagram \ref{eq: one to zero bottom iso}. We have explained why the latter is impossible. In the case of Diagram \ref{eq: one to zero two to one}, composing all maps along the long arm up to vertex $i$ yields Diagram \ref{eq: two zeros mapping to thick} which we have already shown is impossible. 

\item If this is false we obtain Diagram \ref{eq: second diagram} (where $V_k \to V_j$ and $W_k \to W_j$ are compositions of maps along the long arm) which we have already shown is impossible. 

\end{enumerate}
\end{proof}

\begin{proposition}\label{prop: no infinite sequence of non-trivial maps}
    If $\Omega$ is the $D_\infty$ mountain, there does not exist an infinite sequence $W_1, W_2, \ldots$ of indecomposable FLEI representations of $\Omega$ with nontrivial (nonzero and non-isomorphic) maps $W_1 \to W_2 \to \ldots$. 
\end{proposition}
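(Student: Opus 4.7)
The plan is to argue by contradiction. Suppose such an infinite sequence $W_1 \to W_2 \to \cdots$ of nontrivial maps exists. By inserting intermediate indecomposables between consecutive $W_i$ using Corollary \ref{cor: all inj and surj}, I may assume every map in the sequence is either an injection or a surjection. Lemma \ref{lm: ordering among types} then forces the types of the $W_i$ to be weakly nondecreasing, so after passing to a tail every $W_i$ lies in a single type class (the only ambiguity being the mixed class \{III(a), III(b)\}, since Lemma \ref{lm: ordering among types} permits maps in either direction between them).

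I would then dispose of the purely thin cases V, I, II, III(a), IV. Type V admits only the simple at the trivalent vertex $k$, so no nontrivial chain exists. For each of Types I, II, III(a), IV the $W_i$ are thin indecomposables (by Corollary \ref{cor: flei indecomposables of D infinity}), so Lemma \ref{lem: the two relations on type A are the same} applied to the finitely branching tree quiver $D_\infty$ translates the chain into an infinite descending chain in the combinatorial order $>_\mathcal{C}$ on connected subquivers of $D_\infty$. Because the mountain orientation makes $D_\infty$ globally outward from its two leaves, any connected subquiver admits only finitely many enhancements (each adds source-side vertices from the finite set $\{\ell_1,\ell_2,k\}$ or a prefix of the long arm before the source boundary becomes empty), while every reduction strictly shrinks the underlying vertex set. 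Hence $>_\mathcal{C}$ is well-founded on $\mathcal{C}(D_\infty)$, contradicting the assumed infinite chain. (For Type I, the support of each $W_i$ lies in one of the three components of $D_\infty\setminus\{k\}$ by Lemma \ref{lem: support of indecomposable is connected}, and the same $>_\mathcal{C}$ analysis restricted to that component yields well-foundedness.)

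This leaves the mixed class \{III(a), III(b)\}, which I expect to be the main obstacle. Using Corollary \ref{cor: flei indecomposables of D infinity}, I parametrize each III(b) indecomposable by a pair $(m, n)$ with $0 \le m \le n \le \infty$ describing the extents of its dimension-$2$ and dimension-$1$ regions on the long arm. A direct inspection of the commuting square at the transition vertex $v_m$, in the style of the proof of Lemma \ref{lm: ordering among types}, shows that no nontrivial injection exists between two III(b) indecomposables (the iso at $v_m$ composed with the iso arrow or injection above in $\beta$ cannot match the rank-$1$ arrow of $\alpha$ composed with the vertical), while a nontrivial surjection forces $(m', n') \le (m, n)$ componentwise with at least one strict inequality. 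Thus all-III(b) subchains are finite, and all-III(a) subchains are finite by the thin-case argument above. An infinite chain must therefore contain infinitely many III(a)$\leftrightarrow$III(b) transitions; but by Lemma \ref{lem: types iii a and iii b mixed}(2) each III(b)$\to$III(a) transition strictly shrinks the long-arm support $L(\beta)\subsetneq L(\alpha)$. Since $L$ is always a prefix of $\{v_1, v_2, \ldots\}$ and strict descent on such prefixes is well-founded (an infinite prefix can drop to a finite one only once, after which $|L|$ is a strictly decreasing natural number), only finitely many such transitions are possible. This contradiction completes the proof.
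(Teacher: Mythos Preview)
Your approach mirrors the paper's: reduce to injections/surjections via Corollary~\ref{cor: all inj and surj}, stabilize the type via Lemma~\ref{lm: ordering among types}, handle thin types through well-foundedness of $>_\mathcal{C}$, and treat Type~III by tracking long-arm invariants. Two corrections are needed.

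First, a factual slip: the mountain orientation has the \emph{trivalent vertex} as unique source (Figure~\ref{fig: mountain}); your description ``globally outward from its two leaves'' is the slide orientation. The long arm is oriented identically in both, so your Type~III analysis is unaffected, and $>_\mathcal{C}$ is well-founded for either eventually outward orientation, so your thin-case conclusion survives despite the misstatement.

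Second, the Type~III argument has a genuine gap. You establish that each III(b)$\to$III(a) step satisfies $L(\beta)\subsetneq L(\alpha)$ and conclude that only finitely many such steps can occur. But this inference requires $L$ to be \emph{nonincreasing along the entire chain}, not merely at those particular steps: otherwise $L$ could grow back during the intervening III(a)$\to$III(a), III(a)$\to$III(b), or III(b)$\to$III(b) stretches, and the strict inclusions at the III(b)$\to$III(a) steps could not be chained into a descending sequence of prefixes. You have nonincrease for III(b)$\to$III(b) via your $(m',n')\le(m,n)$, and Lemma~\ref{lem: types iii a and iii b mixed}(1) supplies it for III(a)$\to$III(b), but you never address III(a)$\to$III(a). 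The paper closes this by asserting directly that $L(\beta)\subsetneq L(\alpha)$ whenever both are of Type~III(a) (a short check in the mountain: any nontrivial injection or surjection between III(a) indecomposables forces strictly smaller long-arm support), and then runs the descent on the globally nonincreasing sequence $L(W_1)\supseteq L(W_2)\supseteq\cdots$ together with a companion sequence $S(W_i)$. With the III(a)$\to$III(a) monotonicity verified, your argument is complete and essentially equivalent to the paper's.
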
 

\begin{proof}
By Corollary \ref{cor: all inj and surj} we can assume that all maps are either injective or surjective and by Lemma~\ref{lm: ordering among types}, we can assume without loss of generality that all the indecomposables in such an infinite sequence are of the same type, i.e. all of type I, II, III, IV, or V.  If they are of any type \emph{except} III they are thin, and by Lemma~\ref{lem: the two relations on type A are the same}, they do not admit an infinite descending chain.

Finally suppose there is an infinite sequence of nontrivial morphisms between indecomposables of Type III. Suppose that $V \to W$ is a nontrivial map of such indecomposables. 

First note that if $V$ and $W$ are both of Type III(a) then $L(\beta) \subsetneq L(\alpha)$. Then if $V$ and $W$ are both of Type III(b) we claim that $V \to W$ must be a surjection. Indeed, if there is a nontrivial inclusion $V \to W$, then either $T(V) \subsetneq T(W)$, in which case we have the commutative square in Diagram \ref{eq: impossible square type 2}, or $T(V) = T(W)$ and then $L(V) \subsetneq L(W)$ in which case we have the commutative square in Diagram \ref{eq: impossible square type 1}.   

\begin{equation}\label{eq: impossible square type 2}
\begin{tikzcd}
V_i \cong \mathbb{F} \arrow[d]  & V_j \cong \mathbb{F}^2 \arrow[l]  \arrow[d] & 
\\ W_i \cong \mathbb{F}^2 & W_j \cong \mathbb{F}^2 \arrow[l] 
\end{tikzcd}
\end{equation}

\begin{equation}\label{eq: impossible square type 1}
\begin{tikzcd}
V_i = 0 \arrow[d] & V_j \cong \mathbb{F} \arrow[l]  \arrow[d] 
\\ W_i \cong \mathbb{F} & W_j \cong \mathbb{F} \arrow[l]
\end{tikzcd}
\end{equation}

In both cases $V_j \to W_j$ is an inclusion, hence by a dimension count it is an isomorphism, and $W_j \to W_i$ is also an isomorphism. Hence $V_j \to W_j \to W_i$ has rank $2$ which is at least one larger than the rank of $V_j \to V_i \to W_i$, contradiction.

Therefore $V \to W$ must be a surjection and so we have $T(W) \subseteq T(V)$, and $L(W) \subseteq L(V)$, with in one case the subset being proper. 

Suppose that $W_1 \to W_2 \to \ldots$ is a sequence of nontrivial Type III morphisms. Define $S(W_i)$ to be $T(W_i)$ if $W_i$ is of Type III(b) and $L(W_i)$ if $W_i$ is of Type III(a). Then we obtain two decreasing sequences of connected subsets of vertices on the long arm, $S(W_1) \supseteq S(W_2) \supseteq \ldots$ and $L(W_1) \supseteq L(W_2) \supseteq \ldots$ which, by the result above and Lemma \ref{lem: types iii a and iii b mixed}, has the property that for each $i$, either $S(W_i) \supsetneq S(W_{i + 1})$ or $L(W_i) \supsetneq L(W_{i + 1})$. Note that if $S(W_i)$ is infinite and strictly contains $S(W_{i + 1})$, then the latter must be finite, and similarly for $L(W_i)$. So at some point both sets must be finite, hence there can be only finitely many morphisms in this sequence. 
\end{proof}

\begin{corollary}\label{cor: the mountain is well-founded}
    If $\Omega$ is the $D_\infty$ mountain, the relation $\leq$ on $\text{FLEI}(\Omega)$ is a well-founded partial order.
\end{corollary}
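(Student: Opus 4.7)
The plan is to derive both the partial-order and well-foundedness claims directly from the preceding Proposition on the nonexistence of infinite sequences of nontrivial maps. The definition of $\leq$ is built entirely out of such chains, so once that proposition is available, both claims reduce to the same concatenation trick, with the only real care needed at the junctions where one chain of representatives meets the next.

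First I would verify the three partial-order axioms. Reflexivity follows from taking $n=1$ in Definition~\ref{def: partial order on indecomposables}, where the condition that all maps be nontrivial holds vacuously. For transitivity, suppose $\alpha\geq\beta$ is witnessed by representatives $W^{\gamma_1},\ldots,W^{\gamma_n}$ with nontrivial maps between consecutive terms, and $\beta\geq\delta$ is witnessed by $W^{\delta_1},\ldots,W^{\delta_m}$. Since $W^{\gamma_n}$ and $W^{\delta_1}$ both lie in $\beta$, there is an isomorphism $\phi\maps W^{\gamma_n}\to W^{\delta_1}$, and precomposing the map $W^{\delta_1}\to W^{\delta_2}$ with $\phi$ produces a nontrivial map $W^{\gamma_n}\to W^{\delta_2}$ (nontriviality is preserved under composition with isomorphisms). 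Splicing yields a chain witnessing $\alpha\geq\delta$. Antisymmetry is the first place Proposition~\ref{prop: no infinite sequence of non-trivial maps} is needed: if $\alpha\geq\beta$ and $\beta\geq\alpha$ with $\alpha\neq\beta$, then both chains must contain at least one nontrivial map, and repeatedly splicing the two together (alternating $\alpha\to\beta\to\alpha\to\beta\to\cdots$) produces an infinite sequence of nontrivial maps of indecomposable FLEI representations, contradicting the proposition.

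For well-foundedness, I argue in the same spirit. Suppose toward contradiction there is an infinite strictly descending chain $\alpha_1>\alpha_2>\alpha_3>\cdots$ in $\operatorname{indec}_{\text{FLEI}}(\Omega)$. Each relation $\alpha_i>\alpha_{i+1}$ provides a finite chain of representatives with nontrivial maps between them, and adjacent chains are bridged at the junction by an isomorphism exactly as in the transitivity argument above. Concatenating all of these finite chains produces an infinite sequence of nontrivial maps, contradicting Proposition~\ref{prop: no infinite sequence of non-trivial maps}.

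The main obstacle is entirely bookkeeping, namely making the junction isomorphisms explicit and recording that pre- or post-composition of a nontrivial map with an isomorphism is still nontrivial. Once that is noted, the content of the corollary reduces cleanly to the technical work already done in Proposition~\ref{prop: no infinite sequence of non-trivial maps}, and no genuinely new ideas about $D_\infty$ or FLEI representations are needed here.
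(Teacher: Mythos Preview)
Your proposal is correct and follows essentially the same approach as the paper's own proof, which likewise derives both antisymmetry and well-foundedness by concatenating witness chains and invoking Proposition~\ref{prop: no infinite sequence of non-trivial maps}. Your version is in fact slightly more careful than the paper's, since you make explicit the junction isomorphisms needed to splice chains whose endpoint representatives lie in the same isomorphism class but are not literally equal.
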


\begin{proof}
Reflexivity and transitivity are obvious. For anti-symmetry, suppose that $V = W_1 , W_2, \ldots, W_n = V$ is a sequence of indecomposable FLEI modules and there exist nontrivial maps $W_1 \to \ldots \to W_n$. We can iterate this to obtain an infinite sequence $W_1, \ldots, W_n , W_2, \ldots, W_n, W_2, \ldots$ violating Proposition \ref{prop: no infinite sequence of non-trivial maps}. Therefore $\leq$ is a partial order. The same proposition clearly shows it is well-founded. 
\end{proof}

%%%%%%%%%%%%%%%%%%%%%%%%%%%%%%%%%%%%%%%%%%%%%%%%%%%%%%%%%%

\section{Reflection Functors} \label{sec: reflection functors}

Let $\Omega$ be a quiver without cycles and let $i$ be a vertex. Suppose $i$ is either a source or a sink, and let $S_i(\Omega)$ be the same quiver with all arrows incident to $i$ reversed.  We defined a functor $\Phi_i$ from the (FLEI) representations of $\Omega$ to the (FLEI) representations of $S_i(\Omega)$ as follows.

On objects, let $V$ be a representation. define $\Phi_i(V)(j) = V(j)$ for all vertices $j \neq i$ and $\Phi_i(V)(a)=V(a)$ for all arrows not incident to $i$.  If $i$ is a source, define $F=\sum_{s(a)=i} V(a)$, a map from $V(i)$ to $X=\bigoplus_{j=t(a)\\s(a)=i} V(j)$ and define
\[\Phi_i(V)(i)= X/F[V_i]\]
and $\Phi_i(V)(a)$ for $a$ an arrow in $S_i(\Omega)$ with $t(a)=i$ is the obvious inclusion of  $V(j)$ to $X$ composed with this quotient.  A morphism $f$ from $V$ to $W$ gets mapped to $\Phi_i(f)$ with $\Phi_i(f)(j)=f(j)$ for $i \neq j$ and $\Phi_i(f)(i)$ the map
\[\bigoplus_{j=t(a)\\s(a)=i} V(j)/F[V_i] \to \bigoplus_{j=t(a)\\s(a)=i} W(j)/F[W_i]\]
induced by $f$.

If $i$ is a sink, define $\Phi_i$ in the same manner except define $F=\sum_{t(a)=i} V(a)$ and 
\[\Phi_i(V)(i)= F^{-1}[\{0\}].\]

Call the one taking a sink to a source $\Phi^+_i$ and the one taking a source to a sink $\Phi_i^-$.  The standard argument for the following result extends directly to the case of locally infinite representations on infinite quivers, but is  argued here for completeness. Let $L_i$ be the representation of $\Omega$ with a one dimensional vector space at $i$ and  $0$ dimensional vector spaces at all other vertices, and all arrows assigned the $0$ map.  Note this is irreducible.

\begin{lemma}\label{reflection-FLEI}
 When $\Omega$ is eventually outward The functors $\Phi_i$ takes FLEI representations to FLEI representations, and thus is a functor on $\operatorname{FLEI}(\Omega)$. 
\end{lemma}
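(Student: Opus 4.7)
The plan is to verify the two defining conditions of FLEI directly for $\Phi_i(V)$ on $S_i(\Omega)$. The first condition, that each $\Phi_i(V)(j)$ is finite-dimensional, is trivial at vertices $j \neq i$ since by construction $\Phi_i(V)(j) = V(j)$. At the vertex $i$ itself, the sink case is also easy: $\Phi_i(V)(i) = F^{-1}[\{0\}]$ is a subspace of the finite-dimensional space $V(i)$, so it is finite-dimensional. The source case is the only one requiring thought: $\Phi_i(V)(i) = X/F[V_i]$, where $X = \bigoplus_{s(a)=i} V(t(a))$. Each summand $V(t(a))$ is finite-dimensional by the FLEI hypothesis on $V$, and in the generalized ADE Dynkin quivers under consideration every vertex has degree at most three, so $X$ is a finite direct sum of finite-dimensional spaces. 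The quotient $X/F[V_i]$ is therefore finite-dimensional as well.

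For the second FLEI condition, I would split the arrows of $S_i(\Omega)$ into those not incident to $i$ and those incident to $i$. The arrows not incident to $i$ are literally the same arrows as in $\Omega$, and by the definition of $\Phi_i$ the associated linear maps are unchanged: $\Phi_i(V)(a) = V(a)$. So the count of non-isomorphism arrows of $S_i(\Omega)$ away from $i$ agrees exactly with the corresponding count for $V$, which is finite by the FLEI hypothesis. The arrows incident to $i$ are finite in number (again using finite valence at $i$), so they contribute at most finitely many further non-isomorphism arrows. Adding these two finite counts, only finitely many arrows of $S_i(\Omega)$ have $\Phi_i(V)(a)$ a non-isomorphism.

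Combining both checks, $\Phi_i(V)$ is FLEI, so $\Phi_i$ restricts to a functor on $\operatorname{FLEI}(\Omega) \to \operatorname{FLEI}(S_i(\Omega))$. The role of ``eventually outward'' in the statement is twofold: it is the standing framework under which the FLEI-machinery has been developed and in which reflection functors will be used, and it ensures that after reversing the arrows at $i$ the resulting quiver $S_i(\Omega)$ remains eventually outward, so the functor lands in a category where further reflections can be performed. The only real obstacle is the finite-dimensionality of $X$ in the source case, and this is handled uniformly by the bounded degree of vertices in the quivers of interest; for an arbitrary eventually outward quiver with some infinite-valence source $i$ having $V(i) \neq 0$, this step could fail, but such pathological $\Omega$ are outside the scope of the paper.
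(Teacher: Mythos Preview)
Your argument is essentially correct and is simply a fleshed-out version of what the paper does: the paper's entire proof is the single word ``Obvious.'' So there is no difference in approach to discuss.

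One small slip worth fixing: in the sink case you wrote that $\Phi_i^+(V)(i)=F^{-1}[\{0\}]$ is a subspace of $V(i)$. It is not; when $i$ is a sink, $F=\sum_{t(a)=i} V(a)$ is a map $X=\bigoplus_{t(a)=i} V(s(a)) \to V(i)$, so $F^{-1}[\{0\}]=\Ker F$ sits inside $X$, not inside $V(i)$. The conclusion survives for exactly the reason you gave in the source case: the vertex $i$ has finite valence in the quivers at hand, so $X$ is a finite direct sum of finite-dimensional spaces and hence finite-dimensional, and any subspace of it is too. Your closing remarks about where ``eventually outward'' and bounded degree actually enter are on point.
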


\begin{proof}
  Obvious.
\end{proof}

\begin{lemma} \label{lm:reflection-indec}
  The functors $\Phi_i$ take indecomposables to indecomposables except $L_i$ is taken to the $0$ representation.  If the original indecomposable is not $L_i$ then the relevant sequence
  \begin{align*}
    0 \to \Phi^+_i(V)(i) &\to X \to V(i) \to 0\\
    0 \to V(i) &\to X \to \Phi_i^-(V)(i) \to 0\\
  \end{align*}
  is exact.
\end{lemma}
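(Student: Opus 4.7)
The plan is to prove the two claims via the classical BGP argument, adapted to the possibly infinite, FLEI setting. Concretely, I would first establish exactness of the displayed sequences for $V$ indecomposable and $V \not\cong L_i$; then use this to verify that the composites $\Phi_i^{\mp}\Phi_i^{\pm}$ are naturally isomorphic to the identity on such $V$; and finally conclude that $\Phi_i(V)$ is itself indecomposable. That $\Phi_i(L_i) = 0$ is immediate from the definitions, so attention focuses on $V \not\cong L_i$.

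For exactness, take the source case, where $F\maps V(i) \to X$. Set $K = \ker F$. Every arrow out of $i$ vanishes on $K$, so splitting $V(i) = K \oplus K'$ induces a direct sum decomposition $V \cong W \oplus V'$, where $W$ is concentrated at $i$ with $W(i) = K$ (hence a direct sum of copies of $L_i$) and $V'$ is obtained by replacing $V(i)$ by $K'$ and keeping all other data of $V$. Indecomposability of $V$ together with $V \not\cong L_i$ forces $K = 0$, so $F$ is injective, giving exactness of $0 \to V(i) \to X \to \Phi^-_i(V)(i) \to 0$. The sink case is strictly dual: if $V(i)/\im F \neq 0$, a complement of $\im F$ in $V(i)$ splits off a nonzero direct sum of copies of $L_i$, which contradicts indecomposability.

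With exactness in hand, an unwinding of definitions gives a natural isomorphism $\Phi_i^{\mp}\Phi_i^{\pm}(V) \cong V$, equal to the identity on vertices $j \neq i$ and to the canonical identification $X/\ker F \cong \im F \cong V(i)$ (or its dual) at $i$; the arrow maps match by direct computation. Two further observations are now needed. First, $\Phi_i(V)$ admits no $L_i$-summand in $S_i(\Omega)$: its combined map at $i$ is either the surjective quotient $X \twoheadrightarrow X/\im F_V$ or the injective inclusion $\ker F_V \hookrightarrow X$, so the argument of the previous paragraph applied in $S_i(\Omega)$ rules out any split-off copy of $L_i$. Second, $\Phi_i^{\mp}$ kills exactly the representations concentrated at $i$, i.e., direct sums of copies of $L_i$, since it is the identity on the complementary vertices.

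To conclude, suppose $\Phi_i(V) = A \oplus B$ with $A, B \neq 0$. Since $\Phi_i(V)$ has no $L_i$-summand, neither does $A$ nor $B$; in particular neither is a direct sum of copies of $L_i$, so both $\Phi_i^{\mp}(A)$ and $\Phi_i^{\mp}(B)$ are nonzero. Applying $\Phi_i^{\mp}$ to the decomposition and invoking the natural isomorphism yields $V \cong \Phi_i^{\mp}(A) \oplus \Phi_i^{\mp}(B)$, a nontrivial decomposition contradicting indecomposability of $V$. The main obstacle is the ``no $L_i$-summand'' claim for $\Phi_i(V)$, since this is the one step where the specific surjectivity or injectivity of the reflected combined map must be leveraged carefully; everything else is a direct translation of the classical finite-dimensional BGP argument and goes through in the FLEI setting by Lemma~\ref{reflection-FLEI}.
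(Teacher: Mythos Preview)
Your proof is correct. The exactness argument is essentially identical to the paper's (the paper packages the ``kernel or complement of image splits off as copies of $L_i$'' observation into the separate Lemma~\ref{lm:representative}, but the content is the same). For indecomposability, however, you take a genuinely different route: you establish the natural isomorphism $\Phi_i^{\mp}\Phi_i^{\pm}(V)\cong V$, check that $\Phi_i(V)$ has no $L_i$-summand, and then reflect a hypothetical decomposition back. The paper instead argues directly: given $\Phi_i^+(V)=W\oplus W'$, the spaces $V(j)=W(j)\oplus W'(j)$ for $j\neq i$ induce a splitting $X=X_W\oplus X_{W'}$ compatible with the inclusion of $\Phi_i^+(V)(i)$, so the short exact sequence $0\to\Phi_i^+(V)(i)\to X\to V(i)\to 0$ itself decomposes, yielding a splitting of $V(i)$ and hence of $V$. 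The paper's route is shorter and avoids the auxiliary ``no $L_i$-summand'' step; your route is the more classical BGP presentation and has the side benefit of making the quasi-inverse relationship $\Phi_i^{\mp}\Phi_i^{\pm}\cong\mathrm{id}$ explicit, which is conceptually useful even if not strictly needed here.
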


\begin{proof}
  We argue for $\Phi_i^+$, $\Phi_i^-$ works similarly.  Let $V$ be indecomposable and not equal to $L_i$,  so by Lemma~\ref{lm:representative}(b) $\sum_{t(a)=i}V(a)$ is onto.  This proves the exactness.   Suppose $\Phi_i^+(V)= W \oplus W'$.  Then for each $j \neq i$ one has $V(j) = W(j) \oplus W'(j)$ and for each arrow $a$ that does not have $i$ as a target $V(a)=W(a) + W'(a)$.  Thus $X=\bigoplus_{j=a(a)\\t(a)=i} V(j)$ decomposes into $X_W \oplus X_{W'}$, $\Phi_i^+(V)(i)$ decomposes into $Y(i) \oplus Y'(i)$, and thus the short exact sequence decomposes with $V= W(i) \oplus W'(i)$.
\end{proof}

\begin{lemma} \label{lm:representative}
 If $V$ is indecomposable and $i$ is a sink (resp. source) of $\Omega$ then $\sum_{t(a)=i} V(a)$ is surjective (resp. $\sum_{s(a)=i} V(a)$ is injective.
\end{lemma}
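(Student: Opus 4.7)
The plan is to prove both assertions by contrapositive, constructing a nontrivial direct summand of $V$ supported only at $i$ when the relevant map fails to be surjective (sink case) or injective (source case). This is the standard splitting argument; the key observation is that because $i$ is a sink (resp.\ source), no arrow has $i$ as its source (resp.\ target), so a subspace of $V(i)$ that is either the image of all incoming maps or the kernel of all outgoing maps automatically extends to a subrepresentation that is zero away from $i$.

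First I would handle the sink case. Let $X = \bigoplus_{t(a)=i} V(s(a))$ and $F = \sum_{t(a)=i} V(a) \maps X \to V(i)$. If $F$ is not surjective, choose a complement $C$ so that $V(i) = \im(F) \oplus C$ with $C \neq 0$. Define $W$ to agree with $V$ at every vertex $j \neq i$ and every arrow not incident to $i$, and set $W(i) = \im(F)$; for each arrow $a$ with $t(a) = i$ let $W(a) = V(a)$, which lands in $\im(F) = W(i)$ by construction. Define $W'$ by $W'(j) = 0$ for $j \neq i$, $W'(i) = C$, and all structure maps zero. Since $i$ is a sink, the only arrows involving $i$ are those with $t(a) = i$, and for these the direct sum decomposition is immediate. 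Thus $V = W \oplus W'$, and indecomposability forces either $W' = 0$ (giving surjectivity of $F$) or $W = 0$, in which case $V$ is supported only at $i$ and indecomposability collapses it to $L_i$.

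The source case is dual: set $F \maps V(i) \to \bigoplus_{s(a)=i} V(t(a))$, and if $\ker F \neq 0$ write $V(i) = \ker F \oplus C$. Take $W(i) = C$, $W'(i) = \ker F$, with $W$ and $W'$ zero at the obvious places elsewhere. The outgoing maps from $i$ vanish on $W'$, so $W'$ is a subrepresentation, and since $i$ is a source there are no incoming maps to constrain $W$; the identification $V = W \oplus W'$ is immediate. The argument is entirely routine, with no real obstacle; the only subtlety worth flagging is that the lemma implicitly excludes $V = L_i$, consistent with how it is invoked in Lemma~\ref{lm:reflection-indec}.
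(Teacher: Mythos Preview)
Your argument is correct and is exactly the paper's approach; the paper's proof is the single line ``Any complement of the image (resp.\ the kernel) is a direct summand,'' which you have spelled out in full. Your observation about the implicit exclusion of $L_i$ is also apt: the lemma as stated fails for $V=L_i$, and indeed the paper only invokes it under the hypothesis $V\not\cong L_i$.
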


\begin{proof}
    Any complement of the image (resp. the kernel) is a direct summand.
\end{proof}

\begin{lemma} \label{lm:reflection-nontrivial}
  Suppose $F \maps V \to W$ is a nonzero morphism of indecomposables.  $\Phi_i(F)=0$  only if $F=0$ or $V \equiv L_i$.  $\Phi_i(F)$ is an isomorphism  only if $F$ is an isomorphism.
\end{lemma}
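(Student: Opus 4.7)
The plan is to analyze the two assertions separately, treating the cases $i$ a source (so $\Phi_i=\Phi_i^-$) and $i$ a sink ($\Phi_i=\Phi_i^+$) in parallel, since they are dual.

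For the first assertion, assume $\Phi_i(F)=0$. Since $\Phi_i(F)(j)=F(j)$ for every $j\neq i$, we immediately obtain $F(j)=0$ for all $j\neq i$, so only $F(i)$ can be nonzero. In the sink case, commutativity of the morphism square for each arrow $a$ with $t(a)=i$ gives $F(i)\circ V(a)=W(a)\circ F(s(a))=0$, so $F(i)$ vanishes on the image of $\sum_{t(a)=i} V(a)$. By Lemma \ref{lm:representative}, this sum is surjective whenever $V$ is indecomposable and not isomorphic to $L_i$, in which case $F(i)=0$ and hence $F=0$. The source case is dual: $F(i)$ is forced into the kernel of $\sum_{s(a)=i} W(a)$, which is zero unless the relevant indecomposable is $L_i$.

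For the second assertion, assume $\Phi_i(F)$ is an isomorphism, so that $F(j)$ is an isomorphism for every $j\neq i$. The induced map between the direct sums $X=\bigoplus V(s(a))$ and $\bigoplus W(s(a))$ (or the analogous sums over targets in the source case) is then an isomorphism. In the generic case Lemma \ref{lm:reflection-indec} provides a short exact sequence
\[ 0 \to \Phi_i^+(V)(i) \to X \to V(i) \to 0 \]
for $V$ and the analogous sequence for $W$, and $F$ yields a morphism of these short exact sequences. The two outer vertical arrows are isomorphisms (one by the hypothesis on $\Phi_i(F)$ and the other as just noted), so the five lemma gives that $F(i)$ is an isomorphism, and hence $F$ is an isomorphism. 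The source case runs identically using the dual short exact sequence.

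The main obstacle is the degenerate subcase in which some indecomposable equals $L_i$, where the short exact sequence of Lemma \ref{lm:reflection-indec} collapses. These cases must be handled directly: combining Lemma \ref{lm:reflection-indec} (the only indecomposable $U$ with $\Phi_i(U)=0$ is $L_i$) with the hypotheses of the lemma either forces both $V$ and $W$ to be $L_i$ (so $F$ is a scalar and the conclusion is immediate) or rules the degenerate case out entirely.
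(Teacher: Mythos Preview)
Your argument is correct and follows essentially the same route as the paper: both use that $\Phi_i(F)(j)=F(j)$ for $j\neq i$, invoke Lemma~\ref{lm:representative} to handle the vertex $i$ in the first assertion, and apply the short exact sequence of Lemma~\ref{lm:reflection-indec} together with a five-lemma argument for the second. Your treatment is in fact slightly more careful than the paper's, which tacitly works only in the sink case and does not explicitly dispose of the degenerate $L_i$ subcase in the isomorphism assertion; your observation that in the source case the obstruction lands on $W$ rather than $V$ (``the relevant indecomposable'') is also a fair point, though for the application in Proposition~\ref{pr: reflection preserves well-founded} it makes no difference.
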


\begin{proof}
  For the second sentence $F$ is clearly $0$ if $C \equiv L_i$.  For each $j \neq i$ note that $\Phi_i(F)(j)= F(j)$ and therefore $\Phi_i(F)$ can only be $0$ if for every such $j$ $F(j)= 0$.  By Lemma~\ref{lm:representative}, if $V(i)$ is not $L_i$ then  $\sum_{t(a)=i} V(a)$ is onto, and since $F$ commutes with arrow maps $F(i)=0$.

  For the third sentence, suppose that $\Phi(F)$ and hence each $\Phi(F)(j)$, is an isomorphism, and again  for each $j \neq i$ we have immediately that $F(j)$ is an isomorphism.  Since then $\sum_{j=s(a)\\s(a)=i} F(j)$ is an isomorphism, $F(i)$ sits in a short exact sequence with $\Phi(F)(j)$ and $\sum_{j=s(a)\\s(a)=i} F(j)$ and this is an isomorphism.
  
\end{proof}

\begin{lemma} \label{lm:no-irred}
  Suppose $i$ is a sink of $\Omega$, suppose $V$ and $W$ are indecomposable representations of $\Omega$ and $f \maps V \to L_i$ and $g \maps L_i \to W$ are nontrivial morphisms.  Then $gf$ is a nontrivial morphism.  
\end{lemma}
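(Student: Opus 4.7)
The approach is to verify directly, vertex-by-vertex, that $gf$ is both nonzero and not an isomorphism, using that $L_i$ is concentrated at the single vertex $i$ with a one-dimensional space there.

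The first step is to record the consequences of this support condition for $f$ and $g$: at every vertex $j \neq i$ one has $f(j) = 0$ and $g(j) = 0$, so $f$ is determined by $f(i) \maps V(i) \to \mathbb{F}$ and $g$ by $g(i) \maps \mathbb{F} \to W(i)$. The hypothesis that $f$ and $g$ are nonzero then upgrades to $f(i)$ being surjective and $g(i)$ being injective, since $\mathbb{F}$ is one-dimensional.

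Next, for the nonzero part: if $gf = 0$ then $g(i) \circ f(i) = 0$, and surjectivity of $f(i)$ forces $g(i) = 0$, contradicting $g$ being nontrivial. For the non-isomorphism part: if $gf$ were an isomorphism, then at each vertex $j \neq i$ the zero map would need to be an isomorphism $V(j) \to W(j)$, forcing $V(j) = W(j) = 0$; combined with $i$ being a sink, the only potentially nonzero structure maps of $V$ and $W$ are those into $i$ from vertices $s(a) \neq i$, and these vanish since $V(s(a)) = W(s(a)) = 0$. Hence $V$ and $W$ are supported only at $i$ with all structure maps zero, and indecomposability then forces $V \cong W \cong L_i$; but then $f \maps L_i \to L_i$ is a nonzero scalar, hence an automorphism, contradicting $f$ being non-isomorphic.

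The main obstacle is purely bookkeeping about which components of $f$ and $g$ carry the non-triviality; no deep step is involved. A shorter alternative is to note that Lemma~\ref{lm:representative} applied to the sink $i$ forces any nonzero $f \maps V \to L_i$ with $V$ indecomposable to require $V \cong L_i$ (since otherwise $\sum_{t(a) = i} V(a)$ is surjective and hence $f(i)$, and thus $f$, must vanish), whereupon the hypothesis of $f$ being nontrivial is self-contradictory and the conclusion holds vacuously.
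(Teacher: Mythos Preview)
Your main argument is correct and shares the paper's core observation: since $L_i$ is simple, any nonzero $f\maps V\to L_i$ is surjective and any nonzero $g\maps L_i\to W$ is injective, whence $gf\neq 0$. For the non-isomorphism part the paper is more direct than your contradiction: since $f$ is surjective but not an isomorphism it has nontrivial kernel, and since $g$ is injective but not an isomorphism it has nontrivial cokernel, so $gf$ itself has nontrivial kernel and cokernel and cannot be an isomorphism. Your route via ``$V(j)=W(j)=0$ for $j\neq i$ forces $V\cong W\cong L_i$'' reaches the same conclusion but with an unnecessary detour.

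Your closing alternative is a genuine observation the paper does not make: when $i$ is a sink and $V$ is indecomposable with $V\not\cong L_i$, the commutativity constraint $f(i)\circ V(a)=0$ for every arrow $a$ with $t(a)=i$, together with surjectivity of $\sum_{t(a)=i}V(a)$ from Lemma~\ref{lm:representative}, forces $f(i)=0$ and hence $f=0$. Thus no nontrivial $f\maps V\to L_i$ exists at all, and the lemma is vacuous as stated. This is a legitimate and shorter proof; it also explains why, in the application (Proposition~\ref{pr: reflection preserves well-founded}), the representation $L_i$ can in fact only occur at the start of the chain when $i$ is a sink.
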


\begin{proof}
  Clearly $f$ is surjective and if it is not an isomorphims must have a nontrivial kernel, and $g$ is injective and must have a notrivial cokernel, so $gf$ is nonzero and has nontrivial kernel and cokernel.
\end{proof}

\begin{proposition} \label{pr: reflection preserves well-founded}
  If $\Omega$ is a quiver and $i$ is a source or sink, and if the category of representations (or FLEI representations) of $S_i(\Omega)$ has well-founded indecomposables, so does $\Omega$.
\end{proposition}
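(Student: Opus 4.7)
The plan is to lift, via the reflection functor $\Phi_i$, any hypothetical infinite descending chain in $\operatorname{indec}_\text{FLEI}(\Omega)$ to one in $\operatorname{indec}_\text{FLEI}(S_i(\Omega))$, contradicting the hypothesis. First, I would suppose $\alpha_1 > \alpha_2 > \cdots$ is such a chain in $\Omega$. By concatenating the witnessing sequences of nontrivial morphisms from Definition \ref{def: partial order on indecomposables}, I would extract an infinite sequence $W_1 \to W_2 \to \cdots$ of nontrivial maps between indecomposable FLEI representations of $\Omega$ in which infinitely many pairwise non-isomorphic classes appear, namely the representatives of each $\alpha_k$.

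Next, I would excise $L_i$ from the sequence. Since $L_i$ is one-dimensional at $i$ and zero elsewhere, any nonzero endomorphism of $L_i$ is an isomorphism, hence trivial; so $L_i$ cannot occur at two consecutive positions of a nontrivial-map chain. At each isolated appearance $W_{k-1} \to L_i \to W_{k+1}$, Lemma \ref{lm:no-irred} (and its evident dual when $i$ is a source) provides a nontrivial composition $W_{k-1} \to W_{k+1}$. Deleting every $L_i$ in this way produces an infinite sequence $V_1 \to V_2 \to \cdots$ of nontrivial maps between indecomposables of $\Omega$, none equal to $L_i$, and still containing infinitely many distinct isomorphism classes, because at most one of the original $\alpha_k$ was the class of $L_i$.

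Then I would apply $\Phi_i$ termwise. By Lemma \ref{lm:reflection-indec} each $\Phi_i(V_k)$ is a nonzero indecomposable of $S_i(\Omega)$, and by Lemma \ref{lm:reflection-nontrivial} each induced morphism $\Phi_i(V_k) \to \Phi_i(V_{k+1})$ is nontrivial since $V_k \not\equiv L_i$. Moreover, the short exact sequence in Lemma \ref{lm:reflection-indec} recovers the vector space $V_k(i)$ as a cokernel (respectively, kernel) built from $\Phi_i(V_k)(i)$ and the shared data on every vertex and arrow not incident to $i$, so distinct isomorphism classes $V_k$ produce distinct classes $\Phi_i(V_k)$. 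The resulting sequence $\Phi_i(V_1) \to \Phi_i(V_2) \to \cdots$ then assembles into an infinite descending chain in $\operatorname{indec}_\text{FLEI}(S_i(\Omega))$, contradicting the hypothesis. The non-FLEI version of the argument is verbatim the same, with Lemma \ref{reflection-FLEI} invoked only to keep everything inside the FLEI subcategory. The main obstacle is the $L_i$-excision step: one must ensure the spliced maps remain nontrivial and that the pruned sequence still carries infinitely many isomorphism classes, but Lemma \ref{lm:no-irred} is precisely tailored to the former and distinctness of the $\alpha_k$ handles the latter, so the rest is bookkeeping.
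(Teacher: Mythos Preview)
Your proposal is correct and follows essentially the same approach as the paper: excise occurrences of $L_i$ from an infinite chain of nontrivial maps using Lemma~\ref{lm:no-irred}, then apply $\Phi_i$ and invoke Lemmas~\ref{lm:reflection-indec} and~\ref{lm:reflection-nontrivial} to obtain an infinite chain in $S_i(\Omega)$. Your version is in fact slightly more careful than the paper's in two respects: you explicitly note the source/sink duality for Lemma~\ref{lm:no-irred}, and you track that infinitely many distinct isomorphism classes survive so as to reassemble a strictly descending chain rather than relying implicitly on the equivalence between well-foundedness and the nonexistence of infinite nontrivial-map sequences.
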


\begin{proof}
  Let $\alpha_1 \to \alpha_2 \to \alpha_3 \to \cdot$ be a chain of morphisms of indecomposables, with each morphism neither the identity nor $0$.  If any of the $\alpha$ are isomorphic to $L_i$, then since $L_i$ is irreducible two such cannot be adjacent in the chain or the map between them would be the identity or $0$, so using Lemma~\ref{lm:no-irred}, we can remove each and replace the two maps involving it with their product, and the resulting chain will be infinite of the original one was.  So we may assume that the chain contains no representation isomorphic to $L_i$.  Applying the functor $\Phi_i^+$, we see the resulting chain will consist of indecomposable representations by Lemma~\ref{lm:reflection-indec}, and that each map is niether zero nor the identity by Lemma~\ref{lm:reflection-nontrivial}.  Since $\Omega'$ admits no infinite such chain, we conclude that $\Omega$ admits no such either.  
\end{proof}

\begin{proposition}
 Every eventually outward orientation of the $D_\infty$ quiver has well-founded indecomposables.
\end{proposition}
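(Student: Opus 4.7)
The plan is to reduce to Corollary~\ref{cor: the mountain is well-founded} by exhibiting, for any eventually outward orientation $\Omega$ of $D_\infty$, a finite sequence of source/sink reflections connecting $\Omega$ to the mountain orientation; iterating Proposition~\ref{pr: reflection preserves well-founded} along the reverse of this sequence then transfers well-foundedness of the mountain back to $\Omega$. Since $\Omega$ is eventually outward, it differs from the mountain on only finitely many edges (at most two on the short arms, and finitely many on the long arm). I will call such an edge \emph{wrong} and set out to eliminate all wrong edges by a controlled sequence of reflections.

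Label the leaves $L_1, L_2$, the trivalent vertex $k$, and the long-arm vertices $v_1, v_2, \ldots$ with $v_1$ adjacent to $k$. First, for each leaf $L_i$ whose incident edge is $L_i \to k$ (wrong), reflect at $L_i$---automatically a source since univalent---yielding $k \to L_i$; this fixes the short arms. Then process the wrong long-arm arrows one at a time in order of increasing distance from $k$. Setting $v_0 := k$, let $e_J$ be the current leftmost wrong long-arm arrow, oriented $v_J \to v_{J-1}$. I ``bubble'' $e_J$ toward $k$ by reflecting successively at $v_{J-1}, v_{J-2}, \ldots, v_1$: at each stage the chosen vertex is a sink because its edge toward $k$ is still in its correct (hence incoming) mountain orientation, while its edge away from $k$ is the wrong arrow freshly pushed there by the previous reflection and is therefore also incoming. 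Each such reflection flips both incident arrows, moving the wrong arrow one step closer to $k$.

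Once the wrong arrow reaches $e_1$ (oriented $v_1 \to k$), I apply a three-step trick: reflect at $L_1$, then at $L_2$, then at $k$. Since the short arms still satisfy $k \to L_i$ from the first phase, each leaf is a sink at the moment of its reflection, and the two leaf reflections produce $L_i \to k$; then $k$ has all three incident arrows incoming (from $L_1$, $L_2$, and $v_1$) and is itself a sink, so reflecting at $k$ flips all three and produces the mountain configuration $k \to L_1$, $k \to L_2$, $k \to v_1$. This restores the short arms and fixes $e_1$ without disturbing any wrong arrows further out on the long arm, since no reflection in this iteration involves any $v_m$ with $m \geq J$. Iterating once per wrong long-arm arrow completes the construction. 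The main technical point to verify is that each bubbling reflection is genuinely at a source or sink at the moment it is performed; this is guaranteed by always targeting the leftmost wrong arrow, so that everything to its left remains in the mountain orientation at the start of each iteration.
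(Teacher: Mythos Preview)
Your proof is correct and follows the same overall strategy as the paper: reduce to Corollary~\ref{cor: the mountain is well-founded} by exhibiting a finite chain of source/sink reflections connecting an arbitrary eventually outward $D_\infty$ to the mountain, then iterate Proposition~\ref{pr: reflection preserves well-founded}. The paper's construction of the reflection sequence is organized differently: rather than tracking ``wrong'' arrows and bubbling each one inward to the trivalent vertex, the paper tracks the furthest source $i$ from the leaves, follows a maximal directed path from $i$ toward the leaves to a sink, and reflects back along that path, thereby moving the furthest source strictly closer to the leaves at each stage. Your bookkeeping (decreasing the count of wrong arrows) and the paper's (decreasing the distance of the outermost source) are different inductive invariants leading to different explicit reflection sequences, but the argument is otherwise the same; your version has the advantage of spelling out more carefully why each reflection is genuinely at a sink, including the three-step handling at the trivalent vertex.
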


\begin{proof}
 It suffices by Proposition~\ref{pr: reflection preserves well-founded} together with Corollary~\ref{cor: the mountain is well-founded} to show that there is a sequence of reflections connecting the mountain orientation of every $D_\infty$ with any given eventually outward orientation. So suppose $\Omega$ is an eventually outward orientation  of $D_\infty$.  Because it is eventually outward   there must be a source, so let $i$ be the furthest source from the leaves of $D_\infty$, and assume $i$ is neither a leaf nor the trivalent vertex.  Consider a maximal oriented path from $i$ heading towards one or both leaves. This path necessarily ends in a sink. A reflection along that sink and then in turn along every vertex in the path results in an orientation with the furthest source closer than $i$ was to the leaves.  Inductively we will eventually reach an orientation where the furthest source is the trivalent vertex or a leaf.  It it is a leaf, then a reflection of one or both leaves results in the furthest source being the trivalent graph, which is the mountain orientation.   
\end{proof}

%%%%%%%%%%%%%%%%%%%%%%%%%%%%%%%%%%%%%%%%%%%%%%%%%%%%%%%%%%

\section{Poset Filtration of Subrepresentations}
\label{sec: poset filtration of subrepresentations}
In this section the quiver is a tree (only necessary for  Lemma~\ref{lm: V is made of F}), but we assume that the FLEI indecomposables form a well-founded partial order under the ordering of Section~\ref{sec: FLEI}.

When we speak in the following of a (possibly infinite) sum of subrepresentations  of a quiver representation, we mean the subrepresentation that assigns to each vertex all finite linear combinations of vectors in individual subrepresentations at that vertex.  Recall a set $S$ of subrepresentations of some $V$  is independent if and only if for every finite subset $W \in S$, $W \cap \sum_{U \in S\setminus \{W\}} U = 0$. Thus if $S$ is independent $\sum_{U \in S} U$  is isomorphic to the direct sum $\bigoplus_{U \in S} U$.

\begin{remark}\label{rem: independence is finite}
 Independence is purely finite  in the  sense that if $S$ is a dependent set of subrepresentations (i.e. not independent), then there exists a finite subset $\{U_1, \ldots, U_n\} \subset S$ which is dependent, and indeed if we have written each $U_k= \sum_j W_{k,j}$ as a sum of subrepresentations,  we can choose for each $i$ a $U_i'$ which is a finite sum of the $W_{i,j}$ so that $\{U'_1, \ldots, U'_n\}$ is dependent. Also, since independence implies independence at each vertex, if $S$ is dependent and is a disjoint union of $T$ and $T'$, then either $T$, $T'$ or $\{\sum_{U \in T} U, \sum_{U \in T'} U\}$ is dependent.  
\end{remark}

  For the balance of this section assume all representations are subrepresentations of some fixed representation $V$ of $\Omega$.  So for instance below if $\beta$ is an isomorphism class of indecomposable FLEI representations then $\sum_{X \in \alpha<\beta } X$ is the sum of all suprepresentations of $V$ that are in some  isomorphism class $\alpha$ of indecomposables with $\alpha<\beta$.

\begin{definition}
    Let $V$ be a representation of $\Omega$. Define for each $\beta \in \operatorname{indec}_{\text{FLEI}}(\Omega)$, $F^\beta = \sum_{X \in \alpha \leq \beta} X$ and $F^{<\beta} = \sum_{\alpha < \beta} F^\alpha = \sum_{\alpha < \beta} X$. 
\end{definition}

The following proposition is then immediate.
\begin{proposition}
The map $\beta \mapsto F^\beta$ is order-preserving. 
\end{proposition}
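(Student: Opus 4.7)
The plan is simply to unwind the definitions and apply transitivity of the partial order $\leq$ on $\operatorname{indec}_{\text{FLEI}}(\Omega)$. Concretely, I need to show that if $\beta_1 \leq \beta_2$ in $\operatorname{indec}_{\text{FLEI}}(\Omega)$ then $F^{\beta_1} \subseteq F^{\beta_2}$ as subrepresentations of $V$, where containment of subrepresentations means containment at every vertex.

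First I would fix indecomposable classes $\beta_1 \leq \beta_2$ and rewrite $F^{\beta_1} = \sum_{X \in \alpha \leq \beta_1} X$ according to the definition, where the sum runs over all subrepresentations $X$ of $V$ belonging to some isomorphism class $\alpha$ with $\alpha \leq \beta_1$. Given such an $X$, transitivity of $\leq$ together with $\beta_1 \leq \beta_2$ yields $\alpha \leq \beta_2$, so $X$ appears among the summands defining $F^{\beta_2} = \sum_{X \in \alpha \leq \beta_2} X$. Therefore the defining index set for $F^{\beta_1}$ is a subset of that for $F^{\beta_2}$, and by the convention that the sum of a family of subrepresentations assigns to each vertex all finite linear combinations of vectors in the summands, enlarging the family can only enlarge the sum at each vertex. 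Hence $F^{\beta_1} \subseteq F^{\beta_2}$.

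There is no real obstacle here: the argument is a formal consequence of transitivity of $\leq$ and monotonicity of the sum operation under inclusion of index sets, which is why the authors flag it as immediate. The only point worth double-checking is that ``order-preserving'' is intended with respect to inclusion on the poset $\text{sub}(V)$ of subrepresentations of $V$, matching the language of the introduction where $F$ is described as an order-preserving map into $\text{sub}(V)$.
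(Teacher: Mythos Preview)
Your proposal is correct and matches the paper's approach exactly: the paper declares the proposition ``immediate'' with no further argument, and your unwinding via transitivity of $\leq$ and monotonicity of sums of subrepresentations is precisely the immediate verification the authors have in mind.
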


\begin{lemma} \label{lem: nonzero maps between isomorphic indecomposables are isomorphisms}
    If $U , V \in \alpha$ are indecomposable FLEI representations of a quiver $\Omega$ and there is a nonzero map $f : U \to V$ then $f$ is an isomorphism. 
\end{lemma}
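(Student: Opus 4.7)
The plan is to argue by contradiction: suppose $f$ is nonzero but not an isomorphism. The first key observation is that because $U$ and $V$ lie in the same isomorphism class $\alpha$, they have identical dimension vectors, and FLEI guarantees $\dim U(i) = \dim V(i) < \infty$ at every vertex $i$. Hence if $f$ were surjective, then at each vertex $f(i) \maps U(i) \to V(i)$ would be a surjection between finite-dimensional vector spaces of equal dimension, hence an isomorphism; that would force $f$ itself to be an isomorphism of representations, contradicting our assumption. The identical argument with "injective" in place of "surjective" rules out $f$ being injective either. So $f$ is nontrivial, non-surjective, and non-injective.

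At this point I would apply Lemma~\ref{lem: break up non-trivial maps into compositions of simple non-trivial maps} to factor $f = h \circ g$, where $g \maps U \to W$ is a nontrivial surjection, $h \maps W \to V$ is a nontrivial injection, and $W$ is an indecomposable FLEI representation. Let $\beta \in \operatorname{indec}_{\text{FLEI}}(\Omega)$ denote the isomorphism class of $W$. The maps $g$ and $h$ are length-two nontrivial chains witnessing $\alpha \geq \beta$ and $\beta \geq \alpha$ in the partial order of Section~\ref{sec: partial order}.

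Invoking the standing assumption of this section that $\geq$ is an (antisymmetric) partial order on $\operatorname{indec}_{\text{FLEI}}(\Omega)$, we conclude $\alpha = \beta$, so $W \cong U$. Reapplying the dimension argument from the first paragraph to $g$, the nontrivial surjection $g \maps U \to W$ between representations with equal finite dimensions at every vertex must be an isomorphism, contradicting the non-triviality of $g$. The only step that required any thought is the dimension-equality observation for isomorphic FLEI indecomposables; everything else is a direct application of the factorization lemma and antisymmetry of the partial order we have assumed.
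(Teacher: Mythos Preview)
Your proof is correct and follows essentially the same route as the paper's: factor through an indecomposable FLEI summand $W \in \beta$ of $\im f$, use antisymmetry of the assumed partial order to conclude $\beta = \alpha$, and finish with a local dimension count. The only difference is cosmetic---you invoke Lemma~\ref{lem: break up non-trivial maps into compositions of simple non-trivial maps} for the factorization and argue by contradiction on the surjection $g \maps U \to W \cong U$, whereas the paper decomposes $\im f$ directly and concludes $f$ is surjective via the inclusion $W \subseteq V$ with $W \cong V$.
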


\begin{proof}
The image of $f$ is a FLEI submodule of $V$ by Proposition \ref{prop: flei closed under sums and quotients and krull-schmidt} which is nonzero by hypothesis, and is therefore a direct sum of nonzero indecomposable FLEI modules. Let $W \in \beta$ be one of these modules. Then the composition of $f$ with the projection onto $W$ is nonzero, hence $\alpha \geq \beta$ but the inclusion of $W$ into $V$ is also nonzero, hence $\beta \geq \alpha$. Therefore $\beta = \alpha$, so $W \subseteq V$ and yet $W \cong V$. Since $V$ is locally finite-dimensional, $W = V$ so the direct sum has one module in it, and $f$ is surjective. Again because $U$ and $V$ are locally finite-dimensional, this implies $f$ is an isomorphism. 
\end{proof}

% \begin{definition}
% Define the quotient map $Q_\beta \maps V \to V/F^{<\beta}$. A family $X^1, \ldots, X^I$ of subrepresentations of $V$ is called \emph{$Q_\beta$-independent} if the family $F^{<\beta}$, $X^1, \ldots, X^I$ is independent. 
% \end{definition}

% \begin{remark}
% Note that this is equivalent to requiring that $Q_\beta$ be injective when restricted to $\sum_{i = 1}^I X^i$ and $Q_\beta(X^1), \ldots, Q_\beta(X^I)$ be an independent family of subrepresentations of $V / F^{<\beta}$. Similarly it is equivalent to the map $\bigoplus_{i = 1}^I Q_\beta|_{X^i} : \bigoplus_{i = 1}^I X^i \to Q_\beta(\sum_{i = 1}^I X^i)$ being an isomorphism.   
% \end{remark}

\begin{lemma} \label{lm:summing beta reps}
  Let $V$ be a representation. 
  \begin{enumerate}
  \item If $U_1, \ldots, U_J \subseteq V$ are submodules with $U_j \in \beta_j$ then there exists an independent family of nonzero submodules $X_1 , \ldots X_I \subseteq V$ with $X_i \in \alpha_i$ such that $\sum_{j = 1}^J U_j = \sum_{i = 1}^I X_i$. For each $1 \leq i \leq I, 1 \leq j \leq J$ we denote by $f_{i,j} : U_j \to X_i$ the injection of $U_j$ into $\sum_{j = 1}^J U_j$ composed with the projection of $\sum_{i = 1}^I X_i$ onto $X_i$. Then if $f_{i , j}$ is nonzero, we have that $\alpha_i \leq \beta_j$ and if equality holds then $f_{i , j}$ is an isomorphism. Furthermore, for each $1 \leq i \leq I$, there exists $1 \leq j \leq J$ such that $\alpha_i \leq \beta_j$. 

    \item Suppose that $S \subset \beta$ (i.e. a set of subrepresentations of $V$ isomorphic to $\beta$) and $S \cup \{F_{<\beta}\}$ is independent.  If $X \in \alpha \leq \beta$ then either $X \subset \sum_{Z \in S} Z + F_{<\beta}$ or $S \cup \{X\} \cup \{F_{<\beta}\}$ is independent. In the latter case $X \in \beta$.
  % \item Suppose that $X^1 , \ldots, X^I \subseteq V$ with $X^i \in \beta$ is a $Q_\beta$-independent family and suppose $X^{I+1} \in \alpha$ for $\alpha \leq \beta$. Then either $Q_\beta\parens{\sum_{i=1}^{I+1} X^i} = Q_\beta \parens{ \sum_{i=1}^{I} X^i}$ or $\alpha=\beta$ and $X^1 , \ldots, X^{I + 1}$ is $Q_\beta$-independent.

%  \hilight{need to prove this when $I = 0$ too!}

  % \item Suppose that $X^1 , \ldots, X^I \subseteq V$ with $X^i \in \beta$ is a $Q_\beta$-independent family and  $U^1 , \ldots, U^J \subseteq V$ with $U^j \in \alpha_j \leq \beta$.  Then there is an $N$ and representations  $X^{I+1} , \ldots, X^{I+N} \subseteq V$ with each $X^{I+n}$ equal to some distinct $U_j$ such that $X^1 , \ldots, X^{I + N}$ is $Q_\beta$-independent and 
  %   \[Q_\beta \parens{\sum_{i = 1}^I X^i  + \sum_{j = 1}^J U^j} = Q_\beta \parens{\sum_{i = 1}^{I+N} X^i } \]
  \end{enumerate}
\end{lemma}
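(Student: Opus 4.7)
For part (1), I would apply Proposition~\ref{prop: flei closed under sums and quotients and krull-schmidt}: by part (2) the finite sum $\sum_{j=1}^J U_j$ is FLEI, and by part (3) it admits a Krull-Schmidt decomposition $\sum_j U_j = \bigoplus_{i=1}^I X_i$ into indecomposable FLEI summands $X_i \in \alpha_i$, giving the required independent family. The bound on $f_{i,j}$ follows from the definition of the partial order: if $f_{i,j}$ is nonzero and not an isomorphism, it is nontrivial (Definition~\ref{def: partial order on indecomposables}) so $\beta_j > \alpha_i$, while if it is an isomorphism then $\alpha_i = \beta_j$; either way $\alpha_i \leq \beta_j$. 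Equality together with nonvanishing of $f_{i,j}$ forces an isomorphism by Lemma~\ref{lem: nonzero maps between isomorphic indecomposables are isomorphisms}. For the final assertion, $X_i \neq 0$ equals $\pi_i(\sum_j U_j) = \sum_j \operatorname{im}(f_{i,j})$ (where $\pi_i$ is the projection onto $X_i$), so at least one $f_{i,j}$ must be nonzero, yielding the required $j$.

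For part (2), I would first dispose of the case $\alpha < \beta$: by construction $X$ is one of the indecomposable FLEI subrepresentations contributing to $F_{<\beta}$, so $X \subset F_{<\beta} \subset \sum_{Z \in S} Z + F_{<\beta}$, placing us in the first alternative (consistent with the ``latter case'' requiring $X \in \beta$). The substance lies in the case $\alpha = \beta$, where I argue the contrapositive: assuming $X \not\subset \sum_{Z \in S} Z + F_{<\beta}$, I aim to show $S \cup \{X\} \cup \{F_{<\beta}\}$ is independent. Using Remark~\ref{rem: independence is finite} to reduce to finite subfamilies, and observing that any potential dependence must involve $X$ (since $S \cup \{F_{<\beta}\}$ is independent by hypothesis), I would expand the contribution from $F_{<\beta}$ as a finite sum of constituent indecomposable FLEI submodules $Y_1, \ldots, Y_n$ of classes strictly less than $\beta$ and reduce the question to showing $X \cap T = 0$, where $T := \bigoplus_{Z \in S_0} Z \oplus \bigoplus_k Y_k$ for some finite $S_0 \subset S$ (the sum being direct by the independence hypothesis).

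Suppose for contradiction that $K := X \cap T \neq 0$. The plan is to show $K = X$, thereby placing $X$ inside $T \subset \sum_{Z \in S} Z + F_{<\beta}$ and contradicting the standing assumption. Decomposing $K = \bigoplus_\ell K_\ell$ by Proposition~\ref{prop: flei closed under sums and quotients and krull-schmidt}, each inclusion $K_\ell \hookrightarrow X$ is a nonzero map into an indecomposable of class $\beta$, so $\operatorname{class}(K_\ell) \leq \beta$; if any equals $\beta$ then Lemma~\ref{lem: nonzero maps between isomorphic indecomposables are isomorphisms} upgrades that inclusion to an isomorphism, and since both $K_\ell$ and $X$ are subrepresentations of $V$ with equal dimension at every vertex, we obtain $K_\ell = X$ and hence $K = X$ as needed. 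So we may assume every $K_\ell$ has class strictly less than $\beta$; dually, every indecomposable summand of the proper nonzero quotient $X/K \cong (X+T)/T$ has class strictly less than $\beta$, since the nontrivial surjection from $X$ onto any such summand forces the class to drop strictly.

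The main obstacle, and the technical heart of the proof, is ruling out the remaining configuration $0 \neq K \subsetneq X$. My plan is to apply part (1) to $\{X\} \cup S_0 \cup \{Y_k\}$ to decompose $W := X + T = \bigoplus_i X_i'$ with $X_i' \in \alpha_i' \leq \beta$, and to use the detailed analysis of the maps $f_{i, *}$ from part (1) together with Lemma~\ref{lem: nonzero maps between isomorphic indecomposables are isomorphisms} to locate a class-$\beta$ component $X_{i_0}'$ of $W$ onto which $X$ maps isomorphically. Composing this isomorphism with the quotient $W \twoheadrightarrow W/T = X/K$ would then yield a nonzero map from a class-$\beta$ indecomposable into a module whose indecomposable summands all have class strictly less than $\beta$, violating the well-founded partial order. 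Producing $X_{i_0}'$ (i.e., ensuring the inclusion $X \hookrightarrow W$ doesn't distribute entirely among components of strictly smaller class) is the genuinely subtle step, which I expect to require a careful matrix analysis of the $f_{i,*}$'s combined with the tree structure of $\Omega$ via Lemma~\ref{lem: support of indecomposable is connected}.
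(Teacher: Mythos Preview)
Your part (1) is correct and matches the paper's argument.

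Part (2) contains a genuine gap. First, you have the direction of the order reversed for $K$: by Definition~\ref{def: partial order on indecomposables} a nontrivial map $A \to B$ between indecomposables gives $\operatorname{class}(A) \geq \operatorname{class}(B)$ (domain dominates codomain, exactly as you use correctly in part (1)), so the inclusion $K_\ell \hookrightarrow X$ yields $\operatorname{class}(K_\ell) \geq \beta$, not $\leq \beta$. More importantly, the contradiction you aim for at the end is not a contradiction at all: a nonzero map from a class-$\beta$ indecomposable into a module whose summands all have class strictly less than $\beta$ is exactly what the relation $\beta > \gamma$ encodes, and is fully compatible with well-foundedness. So even after locating the summand $X_{i_0}'$ (which, incidentally, is easier than you suggest: if $X$ projected trivially to every class-$\beta$ summand of $W$ it would lie in $W_{<\beta} \subset F_{<\beta}$, contrary to hypothesis), the argument does not close.

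The paper instead argues by counting class-$\beta$ summands. Assuming $S \cup \{X\} \cup \{F_{<\beta}\}$ dependent, reduce to finite $S$ and a finite sum $U$ of class-$<\beta$ subrepresentations, then decompose $\sum_{Z \in S} Z + X + U = Y_1 \oplus Y_2$ with $Y_1$ the class-$\beta$ part and $Y_2$ the class-$<\beta$ part. By part (1) the map $U \to Y_1$ is zero, so $\sum_Z Z + X$ surjects onto $Y_1$, and a vertexwise dimension count bounds the number of $\beta$-summands in $Y_1$ by $|S|+1$. Equality would make $\sum_Z Z + X \to Y_1$ bijective, forcing $S \cup \{X\}$ independent and (since $U \subset Y_2$) independent from $U$ as well, contradicting dependence. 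Hence $Y_1$ has at most $|S|$ summands; independence of $S$ from $F_{<\beta} \supset Y_2$ then makes $\sum_Z Z \to Y_1$ injective, hence bijective, so $Y_1 \subset \sum_Z Z + Y_2$ and therefore $X \subset \sum_Z Z + F_{<\beta}$.
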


\begin{proof}

\begin{enumerate}
    \item By hypothesis $U_j$ is FLEI, hence by part (2) of Proposition \ref{prop: flei closed under sums and quotients and krull-schmidt} $\sum_{j = 1}^J U_j$ is also FLEI and by part (3) is therefore a finite direct sum of indecomposable FLEI submodules, say $\bigoplus_{i = 1}^I X_i$. If $f_{i,j} : U_j \to X_i$ is nonzero, then $\alpha_i \leq \beta_j$ by definition. If $f_{i,j} \neq 0$ and $\alpha_i = \beta_j$, then $f_{i , j}$ is an isomorphism by Lemma \ref{lem: nonzero maps between isomorphic indecomposables are isomorphisms}. The final claim holds because for each $1 \leq i \leq I$, $\sum_{j = 1}^J f_{i , j} : \sum_{j = 1}^J U_j \to X_i$ is the projection map which is nonzero because $X_i$ is, hence one of the $f_{i , j}$'s must also be nonzero. 

    \item Assume that $S \cup \{X\} \cup \{F^{<\beta}\}$ is dependent. Then by Remark~\ref{rem: independence is finite}, we can assume $S$ is finite,  and replace $F^{<\beta}$ with some $U$ which is a finite sum of elements of $\alpha<\beta$.   Then by  part (1) of this lemma, we can write  $\sum_{Z \in S} Z + X + U= Y_1 \oplus Y_2$ where $Y_1$ is a direct sum of elements of $\beta$ and $Y_2$ is a direct sum of elements of $\alpha<\beta$.  Again by part (1) of this lemma, the map of $U$ to $Y_1$ is $0$,  so the map of $\sum_{X \in S} X + Z$ to $Y_1$  surjective. Thus by local dimension count $Y_1$ must be a direct sum of at most $|S+1|$ subrepresentations isomorphic to $\beta$. If it is exactly $|S+1|$, then again by dimension count the map  $\sum_{X \in S} X + Z$ to $Y_1$  is a bijection. In that case $S \cup \{X\}$ is independent, so $\{U, \sum_{Z \in S} Z + X\}$ is dependent.  But the map from $U$ into $Y_1$ is $0$ and the map from $ \sum_{Z \in S} Z + X$ is injective, so they cannot have a nontrivial intersection, which gives a contradiction.  Therefore the sum is of fewer than $|S+1|$ subrepresentations. Then  since  $\sum_{Z \in S} Z$ does not intersect with $Y_2$  the map of $\sum_{Z \in S} Z$ into $Y_1$ is injective, hence by dimension count bijective. Then  one checks at each vertex that $Y_1 \subset \sum_{Z \in S} X + Y_2$ and thus $X \subset \sum_{Z \in S} Z + F^{<\beta}$.  
\end{enumerate}

\end{proof}

\begin{proposition} \label{pr:existence of G}
For each $\beta \in \text{FLEI}(\Omega)$ there is a subrepresentation $G_\beta \subseteq F_\beta$ such that $G_\beta$ is a direct sum of elements of $\beta$ and such that $F_\beta = F_{<\beta} \oplus G_\beta$. 
\end{proposition}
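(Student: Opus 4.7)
The plan is to construct $G_\beta$ by a Zorn's lemma argument, exploiting part (2) of Lemma~\ref{lm:summing beta reps} to show that maximality of a certain independent family forces $F^\beta = F^{<\beta} + G_\beta$. Specifically, I would consider the poset $\mathcal{P}$ whose elements are sets $S$ of subrepresentations of $V$ with each $Z \in S$ isomorphic to $\beta$, and with the property that $S \cup \{F^{<\beta}\}$ is independent (in the sense defined at the start of the section), ordered by inclusion.

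To apply Zorn's lemma I would need to check that every chain $\{S_i\}_{i \in I}$ in $\mathcal{P}$ has an upper bound. The natural candidate is $S = \bigcup_i S_i$, and Remark~\ref{rem: independence is finite} makes the verification routine: if $S \cup \{F^{<\beta}\}$ were dependent, then some finite subfamily would already be dependent, but this finite subfamily lies inside some single $S_i$, contradicting independence there. (The empty set trivially sits in $\mathcal{P}$, so $\mathcal{P}$ is nonempty.) Hence Zorn yields a maximal element $S^*$, and I would define
\[G_\beta \;=\; \sum_{Z \in S^*} Z,\]
which is (canonically isomorphic to) a direct sum of elements of $\beta$ since $S^*$ is independent. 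Independence of $S^* \cup \{F^{<\beta}\}$ also immediately gives $G_\beta \cap F^{<\beta} = 0$, so the sum $F^{<\beta} + G_\beta$ is direct.

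The remaining task is to show $F^\beta \subseteq F^{<\beta} + G_\beta$, for which it suffices to check each generator $X$ of $F^\beta$, i.e.\ each submodule $X$ of $V$ with $X \in \alpha$ for some $\alpha \leq \beta$. Here I apply Lemma~\ref{lm:summing beta reps}(2) directly to $S = S^*$: either $X \subseteq G_\beta + F^{<\beta}$, which is exactly what we want, or $S^* \cup \{X\} \cup \{F^{<\beta}\}$ is independent and moreover $X \in \beta$. In the second case, if $X \notin S^*$ then $S^* \cup \{X\}$ strictly enlarges $S^*$ inside $\mathcal{P}$, contradicting maximality; and if $X \in S^*$ then automatically $X \subseteq G_\beta$. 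Either way, $X \subseteq F^{<\beta} + G_\beta$, which finishes the proof.

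The only real obstacle is conceptual rather than computational: one must make sure that Zorn applies in the infinite-dimensional setting (this is why independence was carefully defined using arbitrary sums) and that the dichotomy in Lemma~\ref{lm:summing beta reps}(2) exactly matches the maximality condition. Both of these are handled cleanly by Remark~\ref{rem: independence is finite} and the formulation of the lemma, so no further ingredients beyond the statements already in the paper should be required.
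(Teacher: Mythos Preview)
Your argument is correct and is essentially the same as the paper's: both construct a maximal independent family of $\beta$-subrepresentations that together with $F^{<\beta}$ remain independent, and both invoke Lemma~\ref{lm:summing beta reps}(2) to see that maximality forces every $X \in \alpha \leq \beta$ into $F^{<\beta} + G_\beta$. The only difference is packaging: the paper well-orders the set of $\beta$-subrepresentations and builds the family by transfinite induction (setting $G_W = W$ or $G_W = 0$ at each stage according to the dichotomy in Lemma~\ref{lm:summing beta reps}(2)), whereas you use Zorn's lemma directly on the poset of independent families. These are interchangeable applications of choice, and neither offers a real advantage here.
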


\begin{proof}

Let $\mathcal{B}$ be the set of subrepresentations of $V$ which are in $\beta$, and place a well-order on $\mathcal{B}$. We prove by transfinite induction the claim that for each $W \in \mathcal{B}$ there exists a subrepresentation $G_{W}$ of $F_\beta$, either equal to $W$ or to $0$, with the properties that the family $F_{< \beta}, G^U$ for $U \leq W$ (in this well-ordering of  $\mathcal{B}$) is independent and its span contains $W$. 

Suppose the claim is satisfied for all $U < W$. then $S=\{G_U \mid U<W\}$ and $W$ satisfy the assumptions of Lemma \ref{lm:summing beta reps} part (2), so either $W$ is contained in the sum $F_{<\beta} + \sum_U G_U$, in which case the claim is true with $G_W=0$, or $W=G_W$ makes $S \cup \{G_W\} \cup \{F_{<\beta}\}$ independent and again the claim is true.  

Now let $G_\beta = \sum_{W \in \beta} G_W$. On the one hand, the family $\{F_{< \beta}\} \cup \{ G_W \mid W \in \beta\}$ is independent. On the other hand, $F_\beta = \sum_{W \in \alpha \leq \beta} W = F_{<\beta} + \sum_{ W \in \beta} W \subset F_{<\beta} + \sum_{ W \in \beta} G_W$.  Therefore $F_\beta = F_{<\beta} \oplus G_\beta$ as desired.

\end{proof}

\begin{lemma} \label{lm: goddam G beta lemma}
If $\beta_1, \ldots, \beta_n$ are distinct isomorphism classes of FLEI modules then the $G_{\beta_i}$ are independent so $\sum_{i=1}^n G_{\beta_i} \cong \bigoplus_{i=1}^n G_{\beta_i}.$
\end{lemma}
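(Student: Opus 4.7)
The plan is to induct on $n$, with the base case $n=1$ vacuous. For the inductive step I use that $\{\beta_1,\ldots,\beta_n\}$ is finite to reindex so that $\beta_n$ is \emph{maximal} in the partial order, ensuring that every $\beta_i$ with $i<n$ either satisfies $\beta_i<\beta_n$ or is incomparable with $\beta_n$. By the inductive hypothesis $\sum_{i<n} G_{\beta_i}$ is already direct, so it suffices to show $G_{\beta_n}\cap\sum_{i<n} G_{\beta_i}=0$.

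Suppose for contradiction that $y\neq 0$ lies in this intersection. Using Remark~\ref{rem: independence is finite}, I can replace each $G_{\beta_i}$ by a finite subsum $U_i':=\bigoplus_{W\in S_i'}W$ (each $W\in\beta_i$) such that $y\in U_n'$ and $y\in\sum_{i<n}U_i'$. Applying Lemma~\ref{lm:summing beta reps}(1) to $T:=\bigcup_i S_i'$ yields a decomposition $\sum_{W\in T}W=\bigoplus_k X_k$ with $X_k\in\alpha_k$ and each $\alpha_k\leq\beta_i$ for some $i$. Set $K_n:=\{k:\alpha_k=\beta_n\}$. The maximality of $\beta_n$ forces the component map from any $W\in S_i'$ with $i\neq n$ into any $X_k$ with $k\in K_n$ to vanish, since its nonvanishing would require $\beta_n\leq\beta_i$. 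Hence $\sum_{i<n}U_i'\subseteq B:=\bigoplus_{k\notin K_n}X_k$, and so $y\in U_n'\cap B$.

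The crux is to show $U_n'\cap B=0$. This intersection is a subrepresentation of the FLEI module $U_n'$, so by Proposition~\ref{prop: flei closed under sums and quotients and krull-schmidt} it decomposes as a finite direct sum $\bigoplus_j N_j$ of indecomposable FLEI modules. Each $N_j$ embeds into $U_n'\subseteq G_{\beta_n}$ (a direct sum of copies of $\beta_n$) and into $B$ (a direct sum of the $X_k$ with $k\notin K_n$), so projecting onto a nonzero summand in each embedding shows the isomorphism class $\gamma$ of $N_j$ satisfies $\gamma\leq\beta_n$ and $\gamma\leq\alpha_k$ for some $k\notin K_n$. If $\gamma=\beta_n$, then $\beta_n\leq\alpha_k\leq\beta_i$ for some $i$, forcing $\alpha_k=\beta_n$ by maximality of $\beta_n$ and contradicting $k\notin K_n$. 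If $\gamma<\beta_n$, then $N_j\subseteq F_{<\beta_n}\cap G_{\beta_n}=0$. Thus every $N_j=0$, giving $y=0$, the desired contradiction. The main obstacle is handling those $\beta_i$ incomparable to $\beta_n$ (where $G_{\beta_i}\not\subseteq F_{<\beta_n}$, so one cannot simply invoke Proposition~\ref{pr:existence of G}); this is precisely what the maximality-forced vanishing in Lemma~\ref{lm:summing beta reps}(1) handles.
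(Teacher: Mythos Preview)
Your overall strategy (induct on $n$, reindex so that $\beta_n$ is maximal, reduce to finite sums via Remark~\ref{rem: independence is finite}, and apply Lemma~\ref{lm:summing beta reps}(1)) is sound up through the claim $\sum_{i<n}U_i'\subseteq B$. The gap is in the argument that $U_n'\cap B=0$: the inequalities in your step ``projecting onto a nonzero summand \dots shows $\gamma\leq\beta_n$ and $\gamma\leq\alpha_k$'' point the wrong way. By Definition~\ref{def: partial order on indecomposables} (and exactly as in Lemma~\ref{lm:summing beta reps}(1)), a nonzero map $N_j\to W_s$ with $W_s\in\beta_n$ gives $\gamma\geq\beta_n$, not $\gamma\leq\beta_n$; likewise $N_j\to X_k$ gives $\gamma\geq\alpha_k$. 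With the correct direction your case analysis collapses: if $\gamma=\beta_n$ you only get $\beta_n\geq\alpha_k$, which does not force $\alpha_k=\beta_n$, and the case $\gamma>\beta_n$ is not handled at all (such $\gamma$ can certainly occur for indecomposable subrepresentations of a sum of copies of $\beta_n$).

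The fix is short and does not require decomposing $U_n'\cap B$. Apply Lemma~\ref{lm:summing beta reps}(1) to the summands of $U_n'$ itself inside $\bigoplus_k X_k$: every nonzero component map $W\to X_k$ with $W\in\beta_n$ forces $\alpha_k\leq\beta_n$, so $U_n'\subseteq\bigoplus_{\alpha_k\leq\beta_n}X_k$. Intersecting with $B=\bigoplus_{k\notin K_n}X_k$ gives $U_n'\cap B\subseteq\bigoplus_{\alpha_k<\beta_n}X_k\subseteq F_{<\beta_n}$, and since $U_n'\subseteq G_{\beta_n}$ you conclude $U_n'\cap B\subseteq G_{\beta_n}\cap F_{<\beta_n}=0$. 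This is essentially the paper's mechanism specialised to the index $n$: the paper avoids induction by showing, for every $i$ at once, that $H_{\beta_i}$ injects into the $\beta_i$-isotypic piece $Z_i$ of the decomposition (using $H_{\beta_i}\cap F_{<\beta_i}=0$), and the $Z_i$ are pairwise disjoint because the $\beta_i$ are distinct.
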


\begin{proof}
  Suppose not. Since each $G_{\beta_i}$ is a direct sum of subrepresentations in $\beta_i$, there is a subrepresentation $H_{\beta_i} \subset G_{\beta_i}$ which is a \emph{finite} direct sum of subrepresentations in $\beta_i$ so that each  $H_{\beta_i}$ is independent of $F_{< \beta_i}$ and also $\{ H_{\beta_i}\}$ is dependent.  By Lemma~\ref{lm:summing beta reps}(1) write $\sum_{i=1}^n H_{\beta_i}$  as a direct sum of indecomposable FLEI modules. Then each $H_{\beta_i}$ embeds in a subset of this sum $Z_i \oplus Y_i$ where $Z_i$ is a direct sum of subrepresentations in $\beta_i$ and $Y_i$ is a direct sum of subrepresentations in $\alpha < \beta_i$.

Notice that $H_{\beta_i} \cap Y_i = 0$ because $H_{\beta_i}$ is independent of $Y_i \subset F_{<\beta_i}$.  Since the kernel of the map
 of $H_{\beta_i}$ into $Z_i$ is contained in $Y_i$, $H_{\beta_i}$ maps injectively  into $Z_i$.  By our supposition there is an $i_0$ such that $H_{\beta_{i_0}} \cap \sum_{i \neq i_0} H_{\beta_{i}}$.  Then projecting this intersection to $\bigoplus_i Z_i$ we see that it has a nonzero image in $Z_{i_0}$ and a nonzero image in $\bigoplus_{i \neq i_0} Z_i$, which is a contradiction.
\end{proof}

\begin{lemma} \label{lm:F is made of G}
For each $\beta$ we have $F_\beta = \sum_{\alpha \leq \beta} G_\alpha$.
\end{lemma}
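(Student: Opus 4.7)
The plan is to prove this by well-founded induction on $\beta$, using the hypothesis that the partial order on $\operatorname{indec}_{\text{FLEI}}(\Omega)$ is well-founded and the defining property of $G_\beta$ from Proposition \ref{pr:existence of G}.

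First I would handle the base of the induction: if $\beta$ is minimal, then there are no $\alpha < \beta$, so $F_{<\beta} = 0$ and hence by Proposition \ref{pr:existence of G} $F_\beta = G_\beta = \sum_{\alpha \leq \beta} G_\alpha$.

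For the inductive step, I would fix $\beta$ and assume as inductive hypothesis that $F_\alpha = \sum_{\gamma \leq \alpha} G_\gamma$ for every $\alpha < \beta$. Then I would compute
\[
F_{<\beta} \;=\; \sum_{\alpha < \beta} F_\alpha \;=\; \sum_{\alpha < \beta} \sum_{\gamma \leq \alpha} G_\gamma \;=\; \sum_{\gamma < \beta} G_\gamma,
\]
where the last equality uses that every $\gamma$ appearing in the double sum satisfies $\gamma \leq \alpha < \beta$ (so $\gamma < \beta$ by transitivity), and conversely every $\gamma < \beta$ appears by taking $\alpha = \gamma$. Combined with $F_\beta = F_{<\beta} \oplus G_\beta$ from Proposition \ref{pr:existence of G}, this gives
\[
F_\beta \;=\; F_{<\beta} + G_\beta \;=\; \sum_{\gamma < \beta} G_\gamma + G_\beta \;=\; \sum_{\gamma \leq \beta} G_\gamma,
\]
completing the induction.

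I do not anticipate a serious obstacle: the argument is a transparent transfinite induction, with all the real content already packaged into Proposition \ref{pr:existence of G} (the existence of $G_\beta$ as a complement of $F_{<\beta}$ in $F_\beta$) and into the well-foundedness of $\leq$ established in Section \ref{sec: reflection functors}. The only point requiring a bit of care is the set-theoretic identity $\sum_{\alpha<\beta}\sum_{\gamma\leq\alpha}G_\gamma = \sum_{\gamma<\beta}G_\gamma$, which is immediate from transitivity of $\leq$ and the convention that sums of subrepresentations are unions of finite linear combinations.
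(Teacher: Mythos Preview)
Your proof is correct and essentially identical to the paper's: both use well-founded induction on $\beta$, invoke $F_\beta = F_{<\beta} \oplus G_\beta$ from Proposition~\ref{pr:existence of G}, and reduce $F_{<\beta} = \sum_{\alpha<\beta} F_\alpha$ to $\sum_{\gamma<\beta} G_\gamma$ via the inductive hypothesis. The only cosmetic difference is that the paper phrases the induction as ``take a minimal counterexample and derive a contradiction,'' whereas you give the direct inductive argument.
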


\begin{proof}
Suppose this is not true. Then by well-foundedness there is some $\beta$ which is minimal among the set of indecomposable FLEI subrepresentations which do not satisfy the desired condition. Then by Lemma \ref{pr:existence of G} we have that $F_\beta = F_{<\beta} \oplus G_\beta$. But $F_{< \beta} = \sum_{\alpha < \beta} F_\alpha$ and by minimality of $\beta$ this is equal to $\sum_{\alpha < \beta} \sum_{\gamma \leq \alpha} G_\gamma$. Hence $F_\beta = \sum_{\alpha < \beta} G_\alpha + G_\beta = \sum_{\alpha \leq \beta} G_\alpha$, contradicting the assumption about of $\beta$.

%Since the collection of all $\beta$ is well-founded, we can induct on it.  Suppose $F^\alpha =  \sum_{\alpha' \leq \alpha} G^{\alpha'}$ for all $\alpha < \beta$.  Then $  F^{< \beta} = \sum{\alpha < \beta} G^\alpha$.  If $x \in F^\beta$, then by Proposition~\ref{pr:existence of G} there is a $y in G^{\beta}$ such that $Q_\beta(x)= Q_\beta(y)$, and therefore $x-y \in F^{< \beta}$ so by induction $x-y \in  \sum{\alpha < \beta} G^\alpha$ which means that $x \in \sum{\alpha \leq \beta} G^\alpha$.
\end{proof}

\begin{lemma} \label{lm: V is made of F}
If $V$ is a representation then $V = \sum_\beta F_\beta$.
\end{lemma}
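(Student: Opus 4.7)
The plan is to reduce the identity $V=\sum_\beta F_\beta$ to the observation that every vector $v$ at every vertex of $V$ lies in some FLEI subrepresentation of $V$, at which point the finite Krull--Schmidt decomposition of FLEI representations from Proposition~\ref{prop: flei closed under sums and quotients and krull-schmidt} finishes the argument.

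First I would fix a vertex $i$ and a vector $v \in V(i)$, and take $W \subseteq V$ to be the smallest subrepresentation of $V$ containing $v$: explicitly, $W(j)$ is the $\mathbb{F}$-span of all $V(p)(v)$ where $p$ ranges over directed paths from $i$ to $j$ in $\Omega$ (with $W(j)=0$ when no such path exists), and $W(a)$ is the restriction of $V(a)$ to $W(s(a))$. Because $\Omega$ is a tree, the directed path between two vertices is unique when it exists, so $W$ is thin, with every $W(j)$ either zero- or one-dimensional.

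Next I would argue that $W$ is FLEI. Local finite-dimensionality is immediate from thinness. For the arrow condition, any arrow $a$ with both endpoints outside the support of $W$ is the trivial isomorphism $0\to 0$. Along any directed journey starting at $i$, the dimension assigned by $W$ cannot increase, and once it drops from $1$ to $0$ it stays at $0$; so each such forward journey contributes at most one arrow $a$ where $W(a)$ is the zero map $\mathbb{F}\to 0$, and the (implicit) finitely-branching hypothesis bounds the number of forward journeys from $i$ by a finite number. Finally, arrows entering the support from outside (those with $s(a)\notin \operatorname{supp}(W)$ but $t(a)\in \operatorname{supp}(W)$) are finite in number by the (implicit) eventually-outward hypothesis, which guarantees that past finitely many arrows every journey points outward and hence has no ``back-edges'' into the support. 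Thus only finitely many arrows of $W$ fail to be isomorphisms, so $W$ is FLEI.

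By Proposition~\ref{prop: flei closed under sums and quotients and krull-schmidt}, $W$ then decomposes as a finite direct sum $W = X_1 \oplus \cdots \oplus X_n$ of indecomposable FLEI subrepresentations, each $X_m \in \alpha_m \in \operatorname{indec}_{\text{FLEI}}(\Omega)$, so $v \in W \subseteq \sum_{m=1}^n F_{\alpha_m} \subseteq \sum_\beta F_\beta$. Since $v$ and $i$ were arbitrary we conclude $V=\sum_\beta F_\beta$. The main obstacle will be the FLEI verification for $W$: making precise how the combination of the tree, finitely-branching, and eventually-outward hypotheses bounds the number of non-isomorphic arrows of $W$. Once that geometric counting is in hand the rest of the argument is purely formal.
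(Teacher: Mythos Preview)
Your proof is correct and follows essentially the same construction as the paper: build the thin subrepresentation $W$ generated forward from a single vector and observe that it is FLEI. The only difference is that the paper skips your appeal to Proposition~\ref{prop: flei closed under sums and quotients and krull-schmidt} by noting directly that $W$ is already indecomposable (its support is connected and every arrow between $1$-dimensional pieces is an isomorphism), so $W$ itself lies in some $\beta$ and $v \in F_\beta$; your Krull--Schmidt step is harmless but unnecessary.
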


\begin{proof}
  If not, there is a vertex $i$ of $\Omega$ so that $\sum_\beta F_\beta(i)$ is a proper subset of $V(i)$, which is to say there is an $x \in V(i)$ with $x$ not in $\sum_\beta F_\beta(i)$.  We build an indecomposable FLEI subrepresentation $W$ that contains $x$ and is in some $\beta$ recursively, so that each $W(j)$ has dimension $1$ or $0$.  First, let $W(i)$ be the span of $x$.  Then if $W(j)$ is one-dimensional and there is an arrow $a \maps j\to k$, then define $W(k)= V(a)[W(j)]$ which is either one-dimensional or $0$ dimensional.  Notice all the $1$ dimensional vertices are connected and the arrows connecting them are all assigned isomorphisms, so this representation is indecomposable, and obviously FLEI.  Then $W$ is in some $\beta$, contradicting the assumption.
\end{proof}

\begin{theorem}\label{thm: decomp of D infinity reps}
If $V$ is a representation of $\Omega$ and the poset of indecomposable FLEI subrepresentations of $V$ is well-founded,  then $V = \bigoplus_\alpha G_\alpha$. In particular every such representation can be written uniquely up to isomorphism and reordering, as a direct sum of indecomposable FLEI modules, and therefore the category of representations of a quiver with this property is infinite Krull-Schmidt.
\end{theorem}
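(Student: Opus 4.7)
My plan is to handle existence first and then uniqueness. For existence, I would combine Lemma~\ref{lm: V is made of F} and Lemma~\ref{lm:F is made of G} to write
\[V = \sum_\beta F^\beta = \sum_\beta \sum_{\alpha \leq \beta} G_\alpha = \sum_\alpha G_\alpha.\]
To upgrade this to a direct sum, I would invoke Remark~\ref{rem: independence is finite} to reduce independence of $\{G_\alpha\}$ to independence of its finite subfamilies, which is precisely Lemma~\ref{lm: goddam G beta lemma}. Since each $G_\alpha$ is by construction (Proposition~\ref{pr:existence of G}) itself a direct sum of subrepresentations in the class $\alpha$, this already exhibits $V$ as a direct sum of indecomposable FLEI representations.

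For uniqueness, suppose $V = \bigoplus_{j \in J} W_j$ is any second decomposition, with $W_j$ an indecomposable FLEI subrepresentation in class $\beta_j$. I would group summands by isomorphism class, setting
\[V_\beta = \bigoplus_{j\,:\, W_j \in \beta} W_j \qquad \text{and} \qquad V_\beta' = \bigoplus_{j\,:\, W_j \notin \beta} W_j,\]
so that $V = V_\beta \oplus V_\beta'$. The goal is the identity $F^\beta = V_\beta \oplus F^{<\beta}$; combined with the construction $F^\beta = G_\beta \oplus F^{<\beta}$, this will force $V_\beta \cong F^\beta/F^{<\beta} \cong G_\beta$. Since both $V_\beta$ and $G_\beta$ are isotypic direct sums of copies of $\beta$, the isotypic Krull--Schmidt statement recalled in Section~\ref{sec: background} then matches the cardinalities of their summand sets, giving the required bijection between the two decompositions of $V$.

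For the sum $V_\beta + F^{<\beta} = F^\beta$, I would take any indecomposable FLEI subrepresentation $X \in \alpha \leq \beta$ of $V$ and project onto each $W_j$. A nonzero projection $X \to W_j$ is either an isomorphism (so $\beta_j = \alpha \leq \beta$) or nontrivial in the sense of the paper (so $\alpha \geq \beta_j$, hence $\beta_j \leq \beta$); either way $X$ only projects onto $W_j$'s with $\beta_j \leq \beta$, so $X \subseteq V_\beta + \bigoplus_{\beta_j < \beta} W_j \subseteq V_\beta + F^{<\beta}$, and summing over $X$ gives the inclusion. For the directness $V_\beta \cap F^{<\beta} = 0$, I would take an indecomposable FLEI subrepresentation $X \in \alpha < \beta$ of $V$: a nonzero projection $X \to W_j$ with $W_j \in \beta$ would force $\alpha \geq \beta$, contradicting $\alpha < \beta$ by antisymmetry of the partial order. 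Hence $X \subseteq V_\beta'$, so $F^{<\beta} \subseteq V_\beta'$ and $V_\beta \cap F^{<\beta} \subseteq V_\beta \cap V_\beta' = 0$.

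The main obstacle is this last directness step: it is the only place where antisymmetry of the partial order on $\operatorname{indec}_{\text{FLEI}}(\Omega)$ is genuinely used, and without the well-foundedness hypothesis (which the earlier sections work hard to establish via reflection functors) the separation of isotypic pieces across two different decompositions would collapse.
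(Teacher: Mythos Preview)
Your proposal is correct and follows essentially the same route as the paper's proof: existence via Lemmas~\ref{lm: V is made of F}, \ref{lm:F is made of G}, and \ref{lm: goddam G beta lemma}, and uniqueness by projecting an arbitrary second decomposition onto the filtration $F^\beta$ to identify each isotypic piece with $F^\beta/F^{<\beta}\cong G_\beta$. Your uniqueness argument is spelled out in more detail than the paper's (which simply asserts that the projection of $F_\beta$ onto $H_\alpha$ vanishes for $\alpha>\beta$), but the underlying mechanism is identical; the only minor imprecision is your closing remark, since well-foundedness is already used essentially in the existence half via Lemma~\ref{lm:F is made of G}, not only in the directness step.
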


\begin{proof}
  By Lemmas~\ref{lm: V is made of F} and~\ref{lm:F is made of G}, $V=\sum_\beta G_\beta$ and by Lemma~\ref{lm: goddam G beta lemma} $V = \bigoplus_\beta G_\beta$.  Since each $G_\beta$ is a direct sum of indecomposable FLEI, so is $V$.

  Suppose now that $V= \bigoplus_\beta H_\beta$ where each $H_\beta$ is a direct sum of subrepresentions in $\beta$.  Then by definition of $F_\beta$  each $H_\beta \subset F_\beta$, and the projection of $F_\beta$ onto $H_\alpha$ for $\alpha>\beta$ is zero by the definition of the ordering.  So for each $\beta$, $ F_\beta= \bigoplus_{\alpha \leq \beta} H_\alpha$ and therefore $H_\beta \cong F_\beta/ F_{< \beta} \cong  G_\beta.$  If $H_\beta$ is isomorphic to $G_\beta$, by dimension count at vertices they must have the same cardinality of indecomposable summands, so the uniqueness property of infinite Krull-Schmidt follows.
\end{proof}

%%%%%%%%%%%%%%%%%%%%%%%%%%%%%%%%%%%%%%%%%%%%%%%%%%%%%%%%%%

%% steve's original is in ``old version 2''
%% nate's original is in ``old version 2''

\section{The Root Space of an Infinite Quiver}
\label{sec: roots}

%Denote by $\phi$ the vertex function which sends an arrow $e$ to the set $\{ s(e) , t(e) \}$. A (finite or infinite) \emph{walk} in $\Omega$ is a (finite or infinite) sequence of edge $e_1, e_2, \ldots$ such that there exists a (necessarily unique) sequence of vertices $v_0, v_1, \ldots$ with $\phi(e_n) = \{ v_{n - 1} , v_n \}$. A walk is called a \emph{trail} if the edges $e_1, e_2, \ldots$ are distinct. A finite trail with vertex sequence $v_0, v_1, \ldots, v_n$ is called a \emph{cycle} if $v_0 = v_n$. A trail is called a \emph{path} if the vertices $v_0, v_1, \ldots$ are distinct.   

\begin{definition}
    Define the \emph{root space} of $\Omega$ to be the set of functions $n : \Omega_0 \to \mathbb{Z}$ such that for all but finitely many arrows $a$, the difference in value between $s(a)$ and $t(a)$ is $0$ (we say then that $n$ is constant along $a$ in this case). 
  \end{definition}

\begin{definition}
  If $n$ is an element of the root space of a quiver $\Omega$ and constant on all arrows not in a subquiver $\Omega'$ we say that $n$ is \emph{anticonstant} on $\Omega'$ and that $\Omega'$ is $n$-anticonstant. Note that every such $n$ admits finite $n$-anticonstant subquivers and any subquiver containing an $n$-anticonstant subquiver is also $n$-anticonstant.
\end{definition}

\begin{definition}
The \emph{support} $\Omega^n$ of an element $n$ of the root space of $\Omega$ is defined to be the unique minimal full subquiver containing all vertices $i \in \Omega_0$ such that $n_i \neq 0$. 
\end{definition}

A subquiver $\Omega'$ of $\Omega$ is called \emph{IC} or ``injective on components''
(resp. \emph{BC} or ``bijective on components'') if no component of $\Omega$ contains multiple components of  $\Omega'$  (resp.  each component of $\Omega$ contains exactly  one component of $\Omega'$). 

\begin{lemma}\label{lem: finite subquivers are contained in finite IC subquivers}
If $\Omega'$ is a finite subquiver of $\Omega$ then there exists a finite IC subquiver $\Omega''$ of $\Omega$ containing $\Omega'$. 
\end{lemma}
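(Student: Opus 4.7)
The plan is to enlarge $\Omega'$ by attaching finitely many walks of $\Omega$ that link together those components of $\Omega'$ already sharing a component of $\Omega$. Since $\Omega'$ is finite, it has only finitely many connected components $C_1,\ldots,C_k$. I would partition $\{1,\ldots,k\}$ into equivalence classes by declaring $i\sim j$ exactly when $C_i$ and $C_j$ lie in the same connected component of $\Omega$; equivalently, when there is a walk in $\Omega$ from some vertex of $C_i$ to some vertex of $C_j$ (walks are allowed to traverse arrows in either direction, since $A_n$ may be given any orientation).

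Next, for each equivalence class $\{i_1,\ldots,i_m\}$ with $m\geq 2$, I would pick for each $\ell\in\{2,\ldots,m\}$ a walk $w_\ell$ in $\Omega$ from some vertex of $C_{i_1}$ to some vertex of $C_{i_\ell}$, which exists by the definition of lying in the same component. Then let $\Omega''$ be the subquiver obtained from $\Omega'$ by adjoining all vertices and arrows appearing in any of the chosen $w_\ell$.

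Finally I would verify the three required conclusions. Containment $\Omega'\subset\Omega''$ is by construction. Finiteness follows because there are finitely many equivalence classes, each contributing at most $k-1$ walks, and each walk has finite domain $A_n$ and so adds only finitely many vertices and arrows. For the IC property, note that within each equivalence class every $C_{i_\ell}$ is connected in $\Omega''$ to $C_{i_1}$ via $w_\ell$, so all the $C_{i_\ell}$ merge into a single connected component of $\Omega''$; thus the components of $\Omega'$ that share a component of $\Omega$ are amalgamated in $\Omega''$, and each component of $\Omega$ now contains at most one component of $\Omega''$.

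There is no real obstacle: the substance of the lemma is that ``being in the same component of $\Omega$'' is by definition witnessed by a walk of finite length, and finitely many finite walks add only finitely much data. The only care required is to make sure, when choosing representatives to connect within each equivalence class, that one does not try to connect every pair (which would still be finite but wasteful); a spanning-tree-style choice as above suffices.
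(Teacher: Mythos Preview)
Your proof is correct and follows essentially the same idea as the paper's: both arguments merge those components of $\Omega'$ lying in a common component of $\Omega$ by adjoining finitely many finite connecting paths/walks. The only cosmetic difference is that the paper proceeds by induction on the number of components of $\Omega'$, adding one path at a time, whereas you fix a spanning-tree choice of walks up front and adjoin them all at once.
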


\begin{proof}
Since $\Omega'$ is finite, it has finitely may components, so we can induct on the number of components. If $\Omega'$ has one component it is automatically IC. Suppose the result holds for finite subquivers with $p$ components. If $\Omega'$ has $p + 1$ components and is not IC, then there exist vertices $i, j \in \Omega_0'$ and a path from $i$ to $j$ in $\Omega$, but no path from $i$ to $j$ in $\Omega'$. Let $\Omega''$ be the union of $\Omega'$ and said path. Then $\Omega''$ has  $p$ or fewer components, so it is contained in a finite IC subquiver by the inductive hypothesis. But $\Omega''$ contains $\Omega'$ so the desired result follows. 
\end{proof}

Recall that a \emph{directed set} is a set $\mathcal{D}$ together with a reflexive, transitive relation $\leq$ on that set with upper bounds in that if  $a , b \in \mathcal{D}$ there exists $c \in \mathcal{D}$ such that $a \leq c$ and $b \leq c$.  If $\mathcal{C} \subset \mathcal{D}$ is \emph{cofinal} (for each $d \in \mathcal{D}$ there is a $c \in \mathcal{C}$ with $c \geq d$) then $\mathcal{C}$ is also directed.   If $\mathcal{D}$ is a directed set, $X$ is a topological space, and $f : \mathcal{D} \to X$ is a function, then we call $f$ a \emph{net} and  say $\lim_{d \in \mathcal{D}} f(d) = x$ if for all open neighborhoods $U$ of $x$, there exists $c \in \mathcal{D}$ such that if $d \geq c$ then $f(d) \in U$.   If $X$ is Hausdorff, then limits are unique when they exist. 

\begin{definition}
  If $(\mathcal{D},f)$ is a net, a subset $\mathcal{C} \subset \mathcal{D}$ is called \emph{stabilizing} if, for each $d \in \mathcal{D}$ there exists a $c \in \mathcal{C}$ with $c \geq d$ and $f(c)=f(d)$.  
\end{definition}

\begin{lemma} \label{lem: stabilizing preserves limits}
  If $\mathcal{C}$ is stabilizing for  $(\mathcal{D},f)$ then $\mathcal{C}$ is a directed set  and
  \[\lim_{d \in \mathcal{D}} f(d) = \lim_{c \in \mathcal{C}} f(c)\]
\end{lemma}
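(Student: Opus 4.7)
The plan is to address the two assertions separately, both of which follow almost directly from unwinding definitions. For the claim that $\mathcal{C}$ is a directed set, given $c_1, c_2 \in \mathcal{C} \subseteq \mathcal{D}$ I would first apply directedness of $\mathcal{D}$ to obtain $d \in \mathcal{D}$ with $d \geq c_1$ and $d \geq c_2$, and then apply the stabilizing hypothesis to $d$ to produce a $c \in \mathcal{C}$ with $c \geq d$; by transitivity $c \geq c_1, c_2$. Reflexivity and transitivity of $\leq$ on $\mathcal{C}$ are inherited from $\mathcal{D}$.

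For the equality of limits, I would prove the two containments. Suppose first that $\lim_{d \in \mathcal{D}} f(d) = x$. Given an open neighborhood $U$ of $x$, choose $d_0 \in \mathcal{D}$ past which $f$ takes values in $U$, and then use the stabilizing property at $d_0$ to obtain some $c_0 \in \mathcal{C}$ with $c_0 \geq d_0$. Any $c \in \mathcal{C}$ with $c \geq c_0$ then satisfies $c \geq d_0$ as well, so $f(c) \in U$, proving $\lim_{c \in \mathcal{C}} f(c) = x$. Note this half only uses cofinality of $\mathcal{C}$ in $\mathcal{D}$, not the value-preservation clause.

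Conversely, suppose $\lim_{c \in \mathcal{C}} f(c) = x$ and let $U$ be an open neighborhood of $x$. Choose $c_0 \in \mathcal{C}$ past which $f$ restricted to $\mathcal{C}$ takes values in $U$. Regarding $c_0$ as an element of $\mathcal{D}$, I would show that $f(d) \in U$ for every $d \in \mathcal{D}$ with $d \geq c_0$: apply the stabilizing hypothesis to $d$ to find $c \in \mathcal{C}$ with $c \geq d$ and $f(c) = f(d)$, note $c \geq c_0$ by transitivity, and conclude $f(d) = f(c) \in U$. This step is the only one that genuinely uses the $f(c) = f(d)$ clause of stabilizing; mere cofinality does not suffice here, as the example $\mathcal{D} = \mathbb{N}$, $\mathcal{C} = 2\mathbb{N}$, $f(n) = n \bmod 2$ (where $\lim_{\mathcal{C}} f = 0$ but $\lim_{\mathcal{D}} f$ does not exist) illustrates. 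I expect no real obstacle here beyond bookkeeping; the content of the lemma is essentially that \emph{stabilizing} is precisely the strengthening of cofinality required to make limits transfer in both directions.
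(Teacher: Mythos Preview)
Your proposal is correct and follows essentially the same approach as the paper: both arguments observe that stabilizing implies cofinality (hence $\mathcal{C}$ is directed), use cofinality alone for the forward implication, and use the value-preservation clause $f(c)=f(d)$ for the reverse implication. Your write-up is simply more detailed, and the counterexample you include nicely highlights why cofinality alone is insufficient for the converse direction.
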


\begin{proof}
Note that $\mathcal{C}$ is cofinal and hence directed. Let $U$ be an open set in the codomain of $f$. If $d \in \mathcal{D}$ is such that for all $d' \in \mathcal{D}$ if $d' \geq d$, then  $f(d') \in U$, then choose $c \in \mathcal{C}$ with $c \geq d$ and it follows that if $c' \geq c$ and $c' \in \mathcal{C}$ then $f(c') \in U$.  On the other hand if $c \in \mathcal{C}$ is such that for all $c' \in \mathcal{C}$ with $c' \geq c$, $f(c') \in U$, then $c \in \mathcal{D}$, and if $d' \in \mathcal{D}$ with $d' \geq c$ choose $c' \geq d'$ with $c' \in \mathcal{C}$ and $f(d')=f(c')$, but $c' \geq c$ so $f(c') \in U$, which implies  $f(d') \in U$.
\end{proof}

Let $\mathcal{I}(\Omega)$ denote the set of finite IC subquivers of $\Omega$ together with the partial order given by inclusion. Given finite IC subquivers $\Omega'$ and $\Omega''$ of $\Omega$, it follows from Lemma \ref{lem: finite subquivers are contained in finite IC subquivers} that $\mathcal{I}(\Omega)$ is cofinal in the set of all finite subquivers and therefore is a directed set.

If $n$ and $m$ are two elements of the root space and  $\Omega'$ is a finite subquiver of $\Omega$, define the Euler form of $n$ and $m$ (respectively the Tits form of $n$) restricted to $\Omega'$ to be
\begin{align} \label{eq: tits form}
\bracket{n,m}_{\Omega'} &= \sum_{ i \in \Omega'_0} n_im_i - \frac{1}{2} \sum_{a \in \Omega'_1}\parens{ n_{s(a)}m_{t(a)} + n_{t(a)}m_{s(a)}}\\ \bracket{n,n}_{\Omega'} &= \sum_{ i \in \Omega'_0} n_i^2 -  \sum_{a \in \Omega'_1}n_{s(a)}n_{t(a)}  \nonumber
\end{align} 
where $n_i$ is the value of $n$ at $i$.  

 Any function $n : \Omega_0 \to \mathbb{Z}$ defines a function $\mathcal{I}(\Omega) \to \mathbb{Z}$ by $\Omega' \mapsto \bracket{n, n}_{\Omega'}$ which gives  a net in $\mathbb{Z} \cup \{ \pm \infty\}$ with the subspace topology (as a subspace of the extended real numbers).   

\begin{definition}\label{def: tits form limit}
  If $\Omega$ is a quiver and $n$ is an element of the root space, define
\begin{equation*}
\bracket{n,n}_{\Omega} = \lim_{\Omega' \in \mathcal{I}(\Omega)} \bracket{n,n}_{\Omega'}
\end{equation*}
\end{definition}

\begin{remark}
    If $\Omega$ is a finite quiver and $n$ is an element of the root space of $\Omega$, then $\lim_{\Omega' \in \mathcal{I}(\Omega)} \bracket{n , n}_{\Omega'} = \bracket{n,n}_{\Omega}$, so the notation ``$\bracket{n,n}_{\Omega}$'' introduced in Definition \ref{def: tits form limit} is not ambiguous.  
\end{remark}

\begin{remark}
  Since $\mathbb{Z} \cup \{ \pm \infty\}$ is Hausdorff this limit is unique if it exists.  If $\ell \in \ZZ$ then $\bracket{n,n}_{\Omega}= \ell$ if and only if there is an $\Omega'' \in\mathcal{I}(\Omega)$ such that $\bracket{n,n}_{\Omega'}= \ell$ for all finite IC subquivers $\Omega' \supseteq \Omega''$. 
\end{remark}

\begin{remark}
One could also consider taking the limit $\lim_{\Omega'} \bracket{n,n}_{\Omega'}$ over the directed set $\mathcal{G}$ of all finite (not necessarily IC) subquivers of $\Omega$. However this has no hope of converging. For example let $\Omega$ be any $A_\infty$ quiver with vertices $x_0, x_1, \ldots$ and arrows $a_1, a_2, \ldots$ and let $n$ denote the element of the root space defined by $n_{x_i} = 1$ for all $i \geq 0$. This should have Tits length $1$, and yet the limit $\lim_{\Omega' \in \mathcal{G}} \bracket{n,n}_{\Omega'}$ does not exist since any finite subquiver $\Omega'$ is contained in a finite subquiver $\Omega''$ (obtained by adding on $i$ vertices and no arrows, and is therefore \textbf{not IC}) such that $\bracket{n,n}_{\Omega''} = \bracket{n,n}_{\Omega'} + i$, hence the net never converges.
\end{remark}

\begin{lemma}\label{lem: tits form computed on support finite}
    Suppose $n$ is an element of the root space of a quiver $\Omega$ and $\Omega'$ is a finite subquiver of $\Omega$. Then $\bracket{n , n}_{\Omega' \cap \Omega^n} = \bracket{n , n}_{\Omega'}$. 
\end{lemma}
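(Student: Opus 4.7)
The plan is to observe that the only contributions to the Tits form come from vertices and arrows where $n$ is nonzero, so intersecting with the support $\Omega^n$ only deletes terms that were already zero.

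First I would unpack the definitions: by definition $\Omega^n$ is the full subquiver on the vertex set $\{i \in \Omega_0 : n_i \neq 0\}$, so $\Omega' \cap \Omega^n$ is the subquiver whose vertex set is $\{i \in \Omega'_0 : n_i \neq 0\}$ and whose arrow set is $\{a \in \Omega'_1 : n_{s(a)} \neq 0 \text{ and } n_{t(a)} \neq 0\}$. Note that fullness of $\Omega^n$ is what ensures that every arrow of $\Omega'$ joining two nonzero-$n$ vertices is retained, so no arrows are spuriously dropped.

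Next I would compare the two Tits form sums directly using the explicit formula from \eqref{eq: tits form}. Any vertex $i \in \Omega'_0 \setminus \Omega^n_0$ has $n_i = 0$ and hence contributes $n_i^2 = 0$ to the vertex sum defining $\bracket{n,n}_{\Omega'}$. Similarly, any arrow $a \in \Omega'_1 \setminus \Omega^n_1$ has at least one endpoint $v \in \{s(a), t(a)\}$ outside $\Omega^n_0$, so $n_{s(a)} n_{t(a)} = 0$, and hence $a$ contributes $0$ to the arrow sum. Thus passing from $\Omega'$ to $\Omega' \cap \Omega^n$ drops only terms that vanish, giving the claimed equality.

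There is really no obstacle here; the statement is essentially a bookkeeping remark whose content is that the formula for the Tits form is local, and its value on a finite subquiver is supported on the intersection with $\Omega^n$. The only thing worth being a little careful about is the fullness of $\Omega^n$, which is what guarantees the arrow sets match up correctly and no arrow between two nonzero-$n$ vertices gets lost in the intersection.
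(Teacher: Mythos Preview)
Your proof is correct and follows essentially the same approach as the paper: both arguments observe that vertices outside $\Omega^n$ contribute zero to the vertex sum, and that any arrow outside $\Omega^n$ must (by fullness of $\Omega^n$) have an endpoint outside $\Omega^n$ and hence contribute zero to the arrow sum. Your version is a bit more explicit about the vertex and arrow sets of $\Omega' \cap \Omega^n$, but the content is identical.
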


\begin{proof}
Every vertex in $\Omega'$ not in $\Omega^n$ clearly contributes only a zero term to the Tits form. Furthermore any arrow in $\Omega'$ not in $\Omega^n$ must have either its source or target not in $\Omega^n$ because $\Omega^n$ is a full subquiver by definition. Hence this also contributes only a zero term to the Tits form, so we obtain the desired equality. 
\end{proof}

We will use Lemma~\ref{lem: tits form computed on support finite} to show $\bracket{n , n}_\Omega = \bracket{n , n}_{\Omega^n}$. Define $\mathcal{J}_n(\Omega)$ to be the set of all finite subquivers $\Omega'$ whose intersection with $\Omega^n$ is IC in $\Omega^n$, which is clearly cofinal in the directed set of finite subquivers and hence a directed set by Lemma~\ref{lem: finite subquivers are contained in finite IC subquivers}.  Since $n$ is in the root space, there exists a finite $n$-anticonstant subquiver.  By Lemma~\ref{lem: finite subquivers are contained in finite IC subquivers} we can extend this to be IC, and again by Lemma~\ref{lem: finite subquivers are contained in finite IC subquivers} we can extend it to $\Omega^*$ so that its intersection with $\Omega^n$ is IC in $\Omega^n$ (it will still be IC in $\Omega$ because we did not add any new components). Define $\mathcal{I}'(\Omega)$ and $\mathcal{J}'_n(\Omega)$ to be the subsets of $\mathcal{I}(\Omega)$ and $\mathcal{J}_n(\Omega)$ respectively of subquivers containing $\Omega^*$. It is easy to see that the following equations hold (in fact they true for any net in any topological space).  
\begin{align}
  \label{eq:intial-segment}
   \lim_{\Omega' \in \mathcal{I}(\Omega)} \bracket{n,n}_{\Omega'}&= \lim_{\Omega' \in \mathcal{I}'(\Omega)} \bracket{n,n}_{\Omega'}\\
   \lim_{\Omega' \in \mathcal{J}_n(\Omega)} \bracket{n,n}_{\Omega'}&= \lim_{\Omega' \in \mathcal{J}'_n(\Omega)} \bracket{n,n}_{\Omega'} \nonumber
\end{align}

\begin{lemma}\label{lem: stabilizing subset}
  Given an element $n$ of the root space of a quiver $\Omega$, the set $\mathcal{I}'(\Omega)$ is a stabilizing subset of $\mathcal{J}'_n(\Omega)$ for the Tits form.
\end{lemma}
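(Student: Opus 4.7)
The plan is to verify two things: first, that $\mathcal{I}'(\Omega) \subseteq \mathcal{J}'_n(\Omega)$ so that the stabilizing statement is even well posed, and second, that given any $\Omega' \in \mathcal{J}'_n(\Omega)$, one can produce an enlargement $\Omega'' \in \mathcal{I}'(\Omega)$ with $\Omega'' \supseteq \Omega'$ and $\Omega'' \cap \Omega^n = \Omega' \cap \Omega^n$; by Lemma~\ref{lem: tits form computed on support finite} this last equality gives $\bracket{n,n}_{\Omega''} = \bracket{n,n}_{\Omega'}$, which is exactly the stabilizing condition. Throughout I will use two structural facts: $\Omega' \supseteq \Omega^*$ is $n$-anticonstant, so every arrow of $\Omega$ outside $\Omega'$ is $n$-constant; and every non-constant arrow of $\Omega$ lies in $\Omega^*$, which in particular forces every arrow of $\Omega$ connecting a vertex of $\Omega^n$ to a vertex outside $\Omega^n$ to lie in $\Omega^*$.

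For the inclusion, take $\Omega' \in \mathcal{I}'(\Omega)$ and assume for contradiction that a single component $F$ of $\Omega^n$ contains two distinct components $A, B$ of $\Omega' \cap \Omega^n$. Since $\Omega^* \cap \Omega^n$ is IC in $\Omega^n$, the intersection $\Omega^* \cap F$ is either empty or a single component $C^*$ of $\Omega^* \cap \Omega^n$. In the empty case, no non-constant arrow of $\Omega$ is incident to $F$, so no arrow of $\Omega'$ crosses the boundary of $F$; each component of $\Omega'$ meeting $F$ is therefore entirely contained in $F$, and $A, B$ are already distinct components of $\Omega'$ sitting in a single $\Omega$-component, contradicting IC-ness of $\Omega'$ in $\Omega$. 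In the nonempty case, WLOG $C^* \subseteq A$; since $\Omega'$ is IC in $\Omega$, $A$ and $B$ lie in a common component $D$ of $\Omega'$, so there is an (undirected) $D$-walk $p$ from $A$ to $B$. Any arrow of $p$ leaving $F$ must be non-constant and hence lie in $\Omega^*$, forcing its $F$-endpoint to lie in $\Omega^* \cap F = C^*$; thus every exit and every re-entry of $p$ through the boundary of $F$ occurs at a vertex of $C^* \subseteq A$. Inside $F$, the walk $p$ uses only $\Omega' \cap F$-arrows and hence remains within a single component of $\Omega' \cap F$, so each stretch of $p$ inside $F$ is confined to $A$; consequently $p$ can never reach $B$, a contradiction.

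For stabilization, given $\Omega' \in \mathcal{J}'_n(\Omega)$ with two distinct components $D_1, D_2$ of $\Omega'$ inside the same component $E$ of $\Omega$, take a shortest undirected walk $p$ in $E$ from $D_1$ to any other component of $\Omega' \cap E$; by minimality $p$ has no intermediate vertex in $\Omega'$ at all, so every arrow of $p$ lies outside $\Omega'$ and is therefore $n$-constant. Hence $n$ takes a common value $c$ along every vertex of $p$. If $c \neq 0$ then $p$ lies entirely inside $\Omega^n$, placing its two endpoints in a common component of $\Omega^n$ but in distinct components of $\Omega' \cap \Omega^n$, contradicting $\Omega' \in \mathcal{J}'_n(\Omega)$. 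Hence $c = 0$, so the intermediate vertices and all arrows of $p$ lie outside $\Omega^n$; adjoining $p$ to $\Omega'$ therefore strictly reduces the number of $\Omega'$-components in $E$ while leaving $\Omega' \cap \Omega^n$ (and thus the Tits form) unchanged. Iterating over the finitely many excess components in the finitely many affected components of $\Omega$ produces the required $\Omega'' \in \mathcal{I}'(\Omega)$. The main delicate point is the componentwise bookkeeping around $C^*$ in the inclusion step; the stabilization step is then essentially formal, since the hypothesis $\Omega' \cap \Omega^n$ IC in $\Omega^n$ is exactly what rules out the $c \neq 0$ case for a minimal connecting walk.
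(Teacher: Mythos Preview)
Your proof is correct and follows essentially the same two-step strategy as the paper: first show $\mathcal{I}'(\Omega)\subseteq\mathcal{J}'_n(\Omega)$ via the observation that any arrow crossing the boundary of a component $F$ of $\Omega^n$ is non-constant and hence lies in $\Omega^*$, then build the stabilizing enlargement by adjoining minimal connecting paths, which are forced to have $n$-value zero. Your inclusion argument is organized a bit differently (an explicit case split on whether $\Omega^*\cap F$ is empty, followed by tracking a walk's excursions out of $F$) whereas the paper argues by choosing a minimal bad pair $i,j$ with an $\Omega'$-path avoiding $\Omega^n$ internally; your version is arguably more careful, since the paper's minimality really needs to be taken relative to $F$ rather than $\Omega^n$ for the final appeal to IC-ness of $\Omega^*\cap\Omega^n$ in $\Omega^n$ to apply cleanly, and your walk-tracking handles this automatically.
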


\begin{proof}
First we need to prove that $\mathcal{I}'(\Omega) \subset \mathcal{J}'_n(\Omega)$, that is if $\Omega'$ is IC and contains $\Omega^*$ then its intersection with $\Omega^n$ is IC in $\Omega^n$. Second we need to prove  that $\mathcal{I}'(\Omega)$ is stabilizing, that is if $\Omega'$ has an IC intersection with $\Omega^n$ and contains $\Omega^*$, it is contained in an $\Omega''$ that is IC and has intersection with $\Omega^n$ that is IC in $\Omega^n$ and agrees with $\Omega'$ on the Tits form.

For the first suppose it is not the case that $\Omega' \cap \Omega^n$ is IC in $\Omega^n$.  Then there exist $i,j \in \Omega' \cap \Omega^n$ with a path connecting them in $\Omega^n$ but no path connecting them in $\Omega' \cap \Omega^n$. In particular since $\Omega'$ is IC there is a path connecting them in $\Omega'$ but not one in $\Omega' \cap \Omega^n$.  We can assume that $i$ and $j$ are minimal with this property in that there is a path $p$  connecting them in $\Omega'$ with no vertices in the path in $\Omega^n$ except the endpoints. That means that $n_i$ and $n_j$ are nonzero but $n$ is equal to zero on every vertex in between on $p$.  That means that the arrows joining $i$ and $j$ to the path are not constant (if $p$ consisted of just one arrow connecting $i$ and $j$ then it would be in $\Omega' \cap \Omega^n$ since $\Omega^n$ is full) and therefore are in $\Omega^*$.  That means that $i$ and $j$ are in $\Omega^* \cap \Omega^n$ and since this is IC in $\Omega^n$ there is a path in $\Omega^* \cap \Omega^n$, which is a contradiction.

For the second, consider any two vertices $i,j$ in $\Omega'$ that are connected in $\Omega$ by a path that does not include vertices in $\Omega'$ other than the endpoints.  Thus no arrows in the path are in $\Omega^*$ and therefore $n$ is constant along this path. If the constant value is nonzero then the entire path is in $\Omega^n$ and so since $\Omega' \cap \Omega^n$ is IC as a subquiver of $\Omega^n$, there is a path connecting $i$ and $j$ in $\Omega' \cap \Omega^n$. Otherwise, all vertices in the path are not in $\Omega^n$ an we add all those vertices to $\Omega'$. We repeat this until no such vertices $i$ and $j$ remain, and this process terminates because each time we add a path, it decreases the number of components of $\Omega'$ which is finite. The result is $\Omega''$ which is IC and contains $\Omega^*$ and has the same intersection with $\Omega^n$ as $\Omega'$. Therefore $\Omega'' \in \mathcal{I}'(\Omega)$ and by Lemma~\ref{lem: tits form computed on support finite} $\bracket{n , n}_{\Omega''} = \bracket{n , n}_{\Omega'}$.
\end{proof}

\begin{proposition} \label{prop: limit computed by support}
For all $\Omega$ and $n$, $\bracket{n,  n}_\Omega = \bracket{n , n}_{\Omega^n}$, that is to say if one exists the other does and in that case their values are equal.
\end{proposition}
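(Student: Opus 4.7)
The plan is to reduce the limit over $\mathcal{I}(\Omega)$ to the limit over $\mathcal{I}(\Omega^n)$ in two stages. In the first stage, I will chain the results already assembled in the excerpt: by Equation~\ref{eq:intial-segment} the limit over $\mathcal{I}(\Omega)$ equals the limit over the final segment $\mathcal{I}'(\Omega)$; Lemma~\ref{lem: stabilizing subset} says $\mathcal{I}'(\Omega)$ is a stabilizing subset of $\mathcal{J}'_n(\Omega)$, so Lemma~\ref{lem: stabilizing preserves limits} identifies their limits; and a second application of Equation~\ref{eq:intial-segment} yields $\lim_{\mathcal{J}'_n(\Omega)} \bracket{n,n}_{\Omega'} = \lim_{\mathcal{J}_n(\Omega)} \bracket{n,n}_{\Omega'}$. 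Thus $\bracket{n,n}_\Omega = \lim_{\Omega' \in \mathcal{J}_n(\Omega)} \bracket{n,n}_{\Omega'}$, with one side existing if and only if the other does.

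In the second stage, I will identify this with $\bracket{n,n}_{\Omega^n} = \lim_{\Omega'' \in \mathcal{I}(\Omega^n)} \bracket{n,n}_{\Omega''}$. The two essential inputs are that $\mathcal{I}(\Omega^n) \subseteq \mathcal{J}_n(\Omega)$ (a finite IC subquiver of $\Omega^n$ is a finite subquiver of $\Omega$ whose intersection with $\Omega^n$ equals itself, hence is IC in $\Omega^n$) and that Lemma~\ref{lem: tits form computed on support finite} gives $\bracket{n,n}_{\Omega'} = \bracket{n,n}_{\Omega' \cap \Omega^n}$ for every $\Omega' \in \mathcal{J}_n(\Omega)$. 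I will then argue bidirectionally on open neighborhoods of a candidate value $\ell$. If $\Omega^\sharp \in \mathcal{J}_n(\Omega)$ eventually traps the $\mathcal{J}_n(\Omega)$-net in a neighborhood $U$, then for any $\Omega'' \in \mathcal{I}(\Omega^n)$ with $\Omega'' \supseteq \Omega^\sharp \cap \Omega^n$ I form $\Omega' = \Omega'' \cup \Omega^\sharp$, which remains in $\mathcal{J}_n(\Omega)$ because $\Omega' \cap \Omega^n = \Omega''$ is IC in $\Omega^n$, and conclude $\bracket{n,n}_{\Omega''} = \bracket{n,n}_{\Omega'} \in U$. Conversely, if $\Omega''_0 \in \mathcal{I}(\Omega^n)$ eventually traps the $\mathcal{I}(\Omega^n)$-net in $U$, then any $\Omega' \in \mathcal{J}_n(\Omega)$ containing $\Omega''_0$ satisfies $\Omega' \cap \Omega^n \supseteq \Omega''_0$, so $\bracket{n,n}_{\Omega'} = \bracket{n,n}_{\Omega' \cap \Omega^n} \in U$.

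The main obstacle is that $\mathcal{I}(\Omega^n)$ is not cofinal in $\mathcal{J}_n(\Omega)$, since elements of $\mathcal{J}_n(\Omega)$ may contain vertices outside $\Omega^n$. Consequently one cannot bridge the two directed sets with a single invocation of Lemma~\ref{lem: stabilizing preserves limits}, and the bidirectional neighborhood argument above is the substitute; it relies essentially on the invariance $\bracket{n,n}_{\Omega'} = \bracket{n,n}_{\Omega' \cap \Omega^n}$ from Lemma~\ref{lem: tits form computed on support finite}, which lets us freely trim or pad the part of a test subquiver lying outside $\Omega^n$ without affecting the form. Once these two stages are combined, the equality $\bracket{n,n}_\Omega = \bracket{n,n}_{\Omega^n}$ (with joint existence) follows.
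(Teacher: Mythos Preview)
Your proposal is correct and follows essentially the same route as the paper: the first stage is identical, and for the second stage the paper also passes from $\mathcal{J}_n(\Omega)$ to $\mathcal{I}(\Omega^n)$ via the intersection map $\Omega' \mapsto \Omega' \cap \Omega^n$, invoking Lemma~\ref{lem: tits form computed on support finite} for form-invariance and the same union construction $\Lambda'' = \Omega'' \cup \Lambda'$ for the lifting property. The only difference is presentational: the paper packages the second stage as an assertion about an order-preserving, onto, form-preserving map with a lifting property, whereas you unwind this into an explicit bidirectional neighborhood argument.
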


\begin{proof}
By Equation~\ref{eq:intial-segment} we have 
\[\bracket{n,n}_{\Omega} = \lim_{\Omega' \in \mathcal{I}(\Omega)} \bracket{n,n}_{\Omega'}=  \lim_{\Omega' \in \mathcal{I}'(\Omega)} \bracket{n,n}_{\Omega'}\]
By Lemmas~\ref{lem: stabilizing subset} and~\ref{lem: stabilizing preserves limits} we obtain
\[ \lim_{\Omega' \in \mathcal{I}'(\Omega)} \bracket{n,n}_{\Omega'}= \lim_{\Omega' \in \mathcal{J}_n'(\Omega)} \bracket{n,n}_{\Omega'}\]
and again by Equation~\ref{eq:intial-segment} it follows that
\[ \lim_{\Omega' \in \mathcal{J}_n'(\Omega)} \bracket{n,n}_{\Omega'}= \lim_{\Omega' \in \mathcal{J}_n(\Omega)} \bracket{n,n}_{\Omega'}\]
Thus $\bracket{n,n}_{\Omega} = \lim_{\Omega' \in \mathcal{J}_n(\Omega)} \bracket{n,n}_{\Omega'}$.
Finally, consider the map from $\mathcal{J}_n(\Omega)$ to $\mathcal{I}(\Omega^n)$, given by intersection.  It is order preserving, onto, by Lemma~\ref{lem: tits form computed on support finite} preserves the Tits form, and has the property that for any $\Omega' , \Omega'' \in \mathcal{I}(\Omega^n)$ and $\Lambda' \in \mathcal{J}_n(\Omega)$ with $\Omega' \subseteq \Omega''$ and $\Lambda' \cap \Omega^n = \Omega'$, there exists $\Lambda'' \in \mathcal{J}_n(\Omega)$ with $\Lambda' \subseteq \Lambda''$ and $\Lambda'' \cap \Omega^n = \Omega''$. Thus it follows that
  \[\lim_{\Omega' \in \mathcal{J}_n(\Omega)} \bracket{n,n}_{\Omega'}=\lim_{\Omega' \in \mathcal{I}(\Omega^n)} \bracket{n,n}_{\Omega'}\]
  and the proposition is proven.
\end{proof}

Now that we can calculate the Tits form on $\Omega^n$, we can determine when it converges to a finite or infinite value.

\begin{lemma}\label{ref: sequence of IC subquivers}
    Let $\Omega$ be a quiver and $\Omega' \subsetneq \Omega''$ finite IC subquivers. Then there exists a sequence of finite IC subquivers $\Omega' = \Lambda^0 \subsetneq \ldots \subsetneq \Lambda^\ell =\Omega''$ such that $\Lambda^{p + 1}$ can be obtained from $\Lambda^p$ by one of the following moves: (1) add a vertex not in $\Lambda^p$ and an arrow connecting it to a vertex that is in $\Lambda^p$, (2) add an arrow not in $\Lambda^p$ between two vertices which are in the same component of $\Lambda^p$, or (3) add a vertex in a component of $\Omega$ which does not contain any vertices from $\Lambda^p$. 

    \begin{enumerate}
        \item If $\Omega'$ is a BC subquiver, then move (3) cannot occur. 
        \item If $\Omega'$ contains all cycles of $\Omega$ then move (2) cannot occur. 
    \end{enumerate}
\end{lemma}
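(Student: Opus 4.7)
The plan is to construct the sequence $\Lambda^0 \subsetneq \cdots \subsetneq \Lambda^\ell$ greedily: at each stage I produce $\Lambda^{p+1}$ from $\Lambda^p$ by a single allowable move that keeps the subquiver inside $\Omega''$ and IC, and terminate when $\Lambda^p=\Omega''$. Each move strictly increases the vertex count or the arrow count, and $\Omega''$ is finite, so the process halts.

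At a given stage I would select the move by the following priority. First, look for an arrow $a \in \Omega''_1 \setminus \Lambda^p_1$ whose both endpoints lie in $\Lambda^p_0$. The two endpoints must then be in the same component of $\Lambda^p$: if not, those two components of $\Lambda^p$ would sit in a common component of $\Omega$ (joined by $a$), contradicting IC of $\Lambda^p$. Hence move (2) applies, and IC is trivially preserved since no new vertex is added. Second, look for $a \in \Omega''_1 \setminus \Lambda^p_1$ with exactly one endpoint in $\Lambda^p_0$, and apply move (1); the new vertex joins an existing component, so IC is preserved. Third, if neither of the above options exists, pick any $v \in \Omega''_0 \setminus \Lambda^p_0$ (one exists since $\Lambda^p\subsetneq \Omega''$) and perform move (3).

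The main obstacle is ensuring that move (3) really is legal, i.e. that the chosen $v$ lies in a component of $\Omega$ disjoint from $\Lambda^p_0$. Suppose for contradiction some $w\in\Lambda^p_0$ lies in the same component of $\Omega$ as $v$. Both are vertices of $\Omega''$; IC of $\Omega''$ implies they share a component of $\Omega''$ as well, so a walk in $\Omega''$ joins them. Walking from $w$ until the first vertex outside $\Lambda^p_0$ (which exists since $v\notin \Lambda^p_0$), the crossing arrow must lie in $\Omega''_1 \setminus \Lambda^p_1$ (if it were in $\Lambda^p_1$ its endpoints would all lie in $\Lambda^p_0$), and it has exactly one endpoint in $\Lambda^p_0$ and one outside. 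This contradicts the hypothesis that the second case did not apply. Thus $v$ is in a component of $\Omega$ with no vertex of $\Lambda^p_0$, and adding $v$ as an isolated vertex preserves IC.

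For the two refinements, I would argue as follows. If $\Omega'$ is BC then so is every $\Lambda^p \supseteq \Omega'$, because $\Omega'$ already meets every component of $\Omega$; in particular the scenario needed for move (3) (a component of $\Omega$ disjoint from $\Lambda^p_0$) never arises, so only moves (1) and (2) are ever used. If $\Omega'$ contains every cycle of $\Omega$, then move (2) can never be invoked: the arrow $a$ added in move (2) would together with a path in $\Lambda^p$ between its endpoints constitute a cycle of $\Omega$, which by hypothesis lies in $\Omega' \subseteq \Lambda^p$, contradicting $a \notin \Lambda^p_1$. Hence only moves (1) and (3) occur in that case.
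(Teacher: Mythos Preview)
Your argument is correct. The forward greedy construction works: the priority scheme guarantees that at each step exactly one of the three moves is available inside $\Omega''$, the IC property is preserved (trivially for (2), by attaching to an existing component for (1), and by your walk argument for (3)), and finiteness of $\Omega''$ forces termination. The refinements are handled cleanly as well.

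The paper takes the dual route: it works \emph{backward} from $\Omega''$ toward $\Omega'$, peeling off one arrow or vertex at a time. At each stage it looks for an arrow of $\Omega''\setminus\Omega'$ whose removal keeps the subquiver IC (undoing a move (2)), or an isolated vertex (undoing a move (3)); failing both, it argues that the part of $\Omega''$ lying outside $\Omega'$ across some separating arrow is a tree, and so has a leaf that can be removed (undoing a move (1)). Your forward construction is somewhat cleaner: the IC check at each step is immediate, and your use of the IC property of $\Omega''$ to produce the crossing arrow in case (3) avoids the tree/leaf analysis entirely. The backward approach, on the other hand, makes it slightly more transparent that one never needs to look outside $\Omega''$. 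Both yield the same refinements (1) and (2) by essentially the same observations.
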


\begin{proof}
We proceed by induction on the total number of edges and vertices in $\Omega''$ not in $\Omega'$. If $\Omega''$ has an arrow not in $\Omega'$ whose removal does not create a new component (leaves it as IC), then it comes via a move (2) from an IC subquiver that inductively can be built from moves 1-3.  If $\Omega''$ contains a vertex with no arrows, it is necessarily in a distinct component of $\Omega$ and comes via a move (1) from an IC subquiver that inductively can be built from moves 1-3. Suppose that neither such a redundant arrow, nor an isolated vertex exists. Then $\Omega''$ must contain an arrow not in $\Omega'$, the removal of which separates $\Omega''$ into two pieces, one of which contains no elements of $\Omega'$ (since $\Omega'$ is IC) and therefore must be a tree (since otherwise an arrow of the first type above would exist). This implies that an arrow whose removal minimizes the number of arrows in that second piece will be a leaf whose removal leaves an isolated vertex, and therefore it comes via a move 1 from an IC subquiver that inductively can be built from moves 1-3.

\begin{enumerate}
    \item If $\Omega'$ is a BC subquiver, then any IC subquiver containing it is also BC and therefore there are no components of $\Omega$ which do not contain vertices from any $\Lambda^p$.
    
    \item If $\Omega'$ contains all cycles of $\Omega$, then no cycle of $\Omega''$ contains any arrow not in $\Omega'$, so it is not possible to add an arrow through move (2).  
\end{enumerate}
\end{proof}

\begin{lemma}\label{lem: constant on complement of chain of quivers}
Suppose $n$ is supported on $\Omega$ (i.e. $\Omega=\Omega^n$) and anticonstant on some finite IC subquiver $\Omega'$ which is contained in another finite IC subquiver $\Omega''$.   

\begin{enumerate}
    \item If $\Omega'$ is a BC subquiver then $\bracket{n , n}_{\Omega''} \leq \bracket{n , n}_{\Omega'}$ with the inequality strict if $\Omega''$ contains a cycle not in $\Omega'$. 
    \item If $\Omega'$ contains all cycles of $\Omega$ then $\bracket{n , n}_{\Omega''} \geq \bracket{n , n}_{\Omega'}$ with the inequality strict if $\Omega''$ has more components than $\Omega'$. 
\end{enumerate}
\end{lemma}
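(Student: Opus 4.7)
The plan is to apply Lemma~\ref{ref: sequence of IC subquivers} to the pair $\Omega' \subseteq \Omega''$, producing a chain of finite IC subquivers $\Omega' = \Lambda^0 \subsetneq \Lambda^1 \subsetneq \cdots \subsetneq \Lambda^\ell = \Omega''$ in which each successive inclusion is effected by one of the three moves (1), (2), or (3). The hypotheses restrict which moves can occur: under the BC assumption of part (1), move (3) is forbidden, and under the ``contains all cycles'' assumption of part (2), move (2) is forbidden. I would then compute the incremental change in the Tits form across each move and telescope.

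Two observations drive the per-move computation. First, since $\Omega = \Omega^n$, every vertex $i$ of $\Omega$ has $n_i \neq 0$. Second, every arrow $a$ added along the chain is an arrow outside $\Omega'$, and since $\Omega'$ is anticonstant this forces $n_{s(a)} = n_{t(a)}$. A direct calculation then yields: a move (1) step, which adds a new vertex $j$ together with an arrow joining $j$ to some $i \in \Lambda^p$, contributes $n_j^2 - n_i n_j = 0$ since $n_i = n_j$; a move (2) step, which adds only an arrow between two vertices $i, j$ already in the same component, contributes $-n_i n_j = -n_i^2 < 0$; and a move (3) step, which adds an isolated new vertex $j$, contributes $n_j^2 > 0$.

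For part (1) all contributions are nonpositive, yielding $\bracket{n,n}_{\Omega''} \leq \bracket{n,n}_{\Omega'}$, with equality precisely when no move (2) occurs. For part (2) all contributions are nonnegative, yielding the reverse inequality, with equality precisely when no move (3) occurs. It remains to match these move-based criteria with the stated combinatorial conditions. For part (2) this is immediate: only move (3) changes the component count of $\Lambda^p$, and it always increases it by one. For part (1) one observes that move (1) attaches a pendant vertex and therefore cannot participate in any new cycle, whereas each move (2) step closes an existing path from $i$ to $j$ in $\Lambda^p$ through the new arrow into a new cycle of $\Lambda^{p+1} \subseteq \Omega''$; by induction the cycle sets of $\Lambda^p$ and $\Lambda^{p+1}$ coincide whenever the step is a move (1), so $\Omega''$ contains a cycle not in $\Omega'$ if and only if some move (2) appears in the chain.

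The only genuinely subtle step is the cycle equivalence in part (1), specifically the direction that a new cycle forces a move (2); I expect this to be the main (mild) obstacle, but it follows from the observation that any arrow added by a move (1) is pendant at the moment of its insertion and so cannot complete a cycle, so if no move (2) ever occurs then the cycle set is preserved all the way from $\Lambda^0 = \Omega'$ to $\Lambda^\ell = \Omega''$.
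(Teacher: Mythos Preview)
Your proposal is correct and follows essentially the same approach as the paper: both apply Lemma~\ref{ref: sequence of IC subquivers} to obtain a chain of finite IC subquivers linked by moves (1)--(3), compute the change in the Tits form across each move using that $n$ is anticonstant on $\Omega'$ and nowhere zero (since $\Omega=\Omega^n$), and then invoke the move restrictions under each hypothesis. Your write-up is in fact slightly more explicit than the paper's in justifying the per-move increments and the cycle/move~(2) equivalence for the strictness claim in part~(1).
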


\begin{proof}
By Lemma \ref{ref: sequence of IC subquivers} there exists a sequence of finite IC subquivers $\Omega' = \Lambda^0 \subsetneq \ldots \subsetneq \Lambda^\ell =\Omega''$ such that $\Lambda^{p + 1}$ can be obtained from $\Lambda^p$ by a move of type (1), (2), or (3). The fact that $n$ is anticonstant on $\Omega'$ and is supported on $\Omega$ implies the following. If the move is type (1) then $\bracket{n , n}_{\Lambda^{p + 1}} = \bracket{n , n}_{\Lambda^{p}}$, if the move is type (2) then $\bracket{n , n}_{\Lambda^{p + 1}} < \bracket{n , n}_{\Lambda^{p}}$ and if the move is type (3) then $\bracket{n , n}_{\Lambda^{p + 1}} > \bracket{n , n}_{\Lambda^{p}}$.
    \begin{enumerate}
        \item In this case, move (3) cannot occur, and if there is a cycle in $\Omega''$ not in $\Omega'$ then move (2) must occur, so we obtain the desired inequalities. 
        \item In this case, move (2) cannot occur, and if $\Omega''$ has more components than $\Omega'$ then move (3) must occur, so again we obtain the desired inequalities. 
    \end{enumerate}
  \end{proof}

\begin{definition}
A quiver is said to have \emph{finite zeroth homology} if the rank of the zeroth homology of the underlying graph is finite, i.e.  it has only finitely many connected components and \emph{finite first homology} if the rank of the first homology of the underlying graph is finite, i.e. it has only finitely many cycles. 
\end{definition}

\begin{lemma}\label{lem: finite homology equals finite retraction}
    A quiver has finite zeroth and first homology if and only if it has a finite retraction. 
\end{lemma}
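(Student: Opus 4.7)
The plan is to establish the two directions by leveraging only what a finite retraction visibly controls (components and cycles), and then reversing the construction by taking the full subquiver on a finite vertex set chosen to capture every cycle and exactly one component inside each component of $\Omega$.

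For the forward direction, suppose $\Omega'$ is a finite retraction of $\Omega$. Since $\Omega'$ is finite, it has only finitely many components and, because trails cannot repeat arrows, only finitely many cycles. The retraction condition says each component of $\Omega$ contains exactly one component of $\Omega'$, which sets up a bijection between the component sets, so $\Omega$ has finitely many components as well. Likewise, every cycle of $\Omega$ lies inside $\Omega'$ by definition of retraction, and there are only finitely many such cycles inside $\Omega'$; hence $\Omega$ has finitely many cycles. This gives finite zeroth and first homology.

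For the reverse direction, assume $\Omega$ has components $C_1, \dots, C_k$ and cycles $c_1, \dots, c_m$, both finite. First I would let $V_0$ be the (finite) set of all vertices occurring on some $c_j$ and take $\Lambda^{(0)}$ to be the full subquiver of $\Omega$ on $V_0$. Since $\Lambda^{(0)}$ is full and each cycle has all of its vertices in $V_0$, every cycle of $\Omega$ lies in $\Lambda^{(0)}$. Then for each component $C_i$: if $C_i$ contains no vertex of $V_0$, pick one vertex $v_i \in C_i$ and add it; if $C_i$ contains components $K_{i,1}, \dots, K_{i,r_i}$ of $\Lambda^{(0)}$ with $r_i \geq 2$, then because $C_i$ is connected choose, for $1 \leq s < r_i$, a path $p_{i,s}$ in $C_i$ from $K_{i,s}$ to $K_{i,s+1}$ and adjoin its vertices. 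Let $V'$ be the (finite) union of $V_0$ and all the added vertices, and let $\Omega' = \Omega[V']$ be the full subquiver on $V'$.

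I would then verify that $\Omega'$ is a finite retraction. It is finite since $V'$ is finite and between any two vertices the quiver has at most one arrow in each direction, so the full subquiver on a finite vertex set is finite. Fullness together with $V_0 \subseteq V'$ guarantees that every cycle of $\Omega$ sits inside $\Omega'$. Finally, $\Omega' \cap C_i$ is connected and nonempty for each $i$: when $r_i = 0$ this is the single added vertex $v_i$, and when $r_i \geq 1$ the paths $p_{i,s}$ connect consecutive $K_{i,s}$ into one connected subgraph (and adding extra full-subquiver arrows can only help connectivity), so each component of $\Omega$ contains exactly one component of $\Omega'$. The main conceptual point — and the only real subtlety — is the fullness bookkeeping in this last step: one must confirm that augmenting $V_0$ by connecting paths and isolated witness vertices does not accidentally break the ``exactly one component per component of $\Omega$'' condition, which follows since adding extra arrows (from fullness) only merges components further inside the ambient $C_i$.
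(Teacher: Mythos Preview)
Your proof is correct. The forward direction matches the paper's (which dispatches it in one sentence). For the reverse direction, the paper takes a slightly different route: it starts with any finite subquiver containing all cycles together with one vertex from each component, and then invokes Lemma~\ref{lem: finite subquivers are contained in finite IC subquivers} to enlarge this to a finite IC subquiver; since it still meets every component, it is BC, and (implicitly) one passes to the full subquiver on that vertex set to obtain a retraction. Your argument is essentially this same construction with Lemma~\ref{lem: finite subquivers are contained in finite IC subquivers} unpacked by hand: you explicitly adjoin connecting paths between the components of $\Lambda^{(0)}$ that fall inside the same component of $\Omega$, and add witness vertices for cycle-free components. What you gain is self-containment and an explicit treatment of the fullness condition in the definition of retraction (which the paper's proof leaves tacit); what the paper's version gains is brevity, since the path-adding step has already been isolated as a lemma. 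Your observation that the full subquiver on a finite vertex set is finite relies on the paper's convention that $\Omega_1$ is a set of ordered pairs, which is indeed the standing definition here.
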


\begin{proof}
If $\Omega$ has a finite retraction it obviously has only finitely many cycles and connected components.  
If $\Omega$ has finite zeroth and first homology, then there is some finite subquiver which contains all cycles and contains a point from every connected component of $\Omega$. By Lemma \ref{lem: finite subquivers are contained in finite IC subquivers} this is contained in a finite IC subquiver which still contains a vertex from each connected component of $\Omega$ and is therefore BC as well. 
\end{proof}

\begin{corollary}\label{cor: constant on complement of retraction}
Suppose $n$ is supported on $\Omega$ and constant on all arrows not in some finite retraction $\Omega^*$ of $\Omega$ then $\bracket{n,  n}_\Omega = \bracket{n , n}_{\Omega^*}$.
\end{corollary}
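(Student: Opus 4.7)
The plan is to show that for every finite IC subquiver $\Omega'$ of $\Omega$ containing $\Omega^*$, the equality $\bracket{n,n}_{\Omega'} = \bracket{n,n}_{\Omega^*}$ holds. Since the set of such $\Omega'$ is cofinal in $\mathcal{I}(\Omega)$, this identifies the limit defining $\bracket{n,n}_\Omega$ with $\bracket{n,n}_{\Omega^*}$.

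First I would verify the setup for Lemma~\ref{lem: constant on complement of chain of quivers}. By definition of retraction, $\Omega^*$ is BC (each component of $\Omega$ contains exactly one component of $\Omega^*$), so in particular IC. Also by definition of retraction, $\Omega^*$ contains every cycle of $\Omega$. Since $n$ is anticonstant on $\Omega^*$ and supported on $\Omega$, the hypotheses of both parts of the lemma are satisfied, with $\Omega^*$ playing the role of the smaller subquiver.

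Now fix a finite IC subquiver $\Omega' \supseteq \Omega^*$. Applying part (1) of Lemma~\ref{lem: constant on complement of chain of quivers} (using that $\Omega^*$ is BC) gives $\bracket{n,n}_{\Omega'} \leq \bracket{n,n}_{\Omega^*}$; the inequality is not strict because $\Omega^*$ contains every cycle of $\Omega$, so $\Omega'$ contains no cycle outside $\Omega^*$. Applying part (2) (using that $\Omega^*$ contains all cycles of $\Omega$) gives $\bracket{n,n}_{\Omega'} \geq \bracket{n,n}_{\Omega^*}$; the inequality is not strict because $\Omega'$ and $\Omega^*$ have the same number of components. Indeed, $\Omega^*$ being BC forces each component of $\Omega$ to contain exactly one component of $\Omega^*$, which sits inside some component of $\Omega'$, and $\Omega'$ being IC means no component of $\Omega$ contains more than one component of $\Omega'$. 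Combining the two inequalities yields $\bracket{n,n}_{\Omega'} = \bracket{n,n}_{\Omega^*}$.

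Finally, the collection $\{\Omega' \in \mathcal{I}(\Omega) : \Omega' \supseteq \Omega^*\}$ is cofinal in $\mathcal{I}(\Omega)$ (any two finite IC subquivers have a common finite IC refinement by Lemma~\ref{lem: finite subquivers are contained in finite IC subquivers}), so the net $\Omega' \mapsto \bracket{n,n}_{\Omega'}$ is eventually constant with value $\bracket{n,n}_{\Omega^*}$, giving $\bracket{n,n}_\Omega = \bracket{n,n}_{\Omega^*}$. There is no real obstacle here; the entire content is bookkeeping around the definition of a retraction (BC plus containing all cycles) so that both sides of Lemma~\ref{lem: constant on complement of chain of quivers} can be invoked simultaneously and the strict-inequality cases are precisely excluded.
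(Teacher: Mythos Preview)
Your proof is correct and follows the same approach as the paper: a retraction is both BC and contains all cycles, so both parts of Lemma~\ref{lem: constant on complement of chain of quivers} apply to give $\bracket{n,n}_{\Omega'}=\bracket{n,n}_{\Omega^*}$ for every finite IC $\Omega'\supseteq\Omega^*$, and cofinality finishes the limit argument. One minor redundancy: once you have both $\bracket{n,n}_{\Omega'}\leq\bracket{n,n}_{\Omega^*}$ and $\bracket{n,n}_{\Omega'}\geq\bracket{n,n}_{\Omega^*}$, equality is immediate, so the separate discussion of why each inequality fails to be strict is not needed.
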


\begin{proof}
A retraction is both BC and contains all cycles of $\Omega$, hence Lemma \ref{lem: constant on complement of chain of quivers} gives the desired equality. 
\end{proof}

\begin{proposition}\label{lpr: convergence of roots based on supports}
Let $\Omega$ be a quiver and $n$ an element of the root space of $\Omega$. 
\begin{enumerate}
    \item If $\Omega^n$ has finite zeroth and first homology, then there exists a finite IC subquiver $\Omega^*$ of $\Omega$ such that $\bracket{n , n}_{\Omega} = \bracket{n,n}_{\Omega^*}$. 
    
    \item If $\Omega^n$ has finite first homology but infinite zeroth homology then $\bracket{n , n}_{\Omega} = +\infty$.

    \item If $\Omega^n$ has finite zeroth homology but infinite first homology then $\bracket{n , n}_{\Omega} = -\infty$.
\end{enumerate}
\end{proposition}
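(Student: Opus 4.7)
The plan is to reduce to $\Omega^n$ via Proposition~\ref{prop: limit computed by support}, exploiting the key fact that on $\Omega^n$ every vertex value lies in $\mathbb{Z}\setminus\{0\}$ so its square is at least $1$. This lets me turn each hypothesis on homology into a quantitative estimate on how the Tits form changes as one enlarges IC subquivers of $\Omega^n$.

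For part (1), I would use Lemma~\ref{lem: finite homology equals finite retraction} to produce a finite retraction $R$ of $\Omega^n$. Since $n$ is in the root space, there is a finite subquiver $Q$ on which $n$ is anticonstant; letting $R'$ denote the full subquiver of $\Omega^n$ on the vertices of $R \cup Q$, one checks that $R'$ is still a retraction of $\Omega^n$ (enlarging a BC vertex set cannot destroy the BC condition, and $R'$ retains all cycles of $\Omega^n$) and that $n$ is anticonstant on $R'$ because $R' \supseteq Q$. Applying Corollary~\ref{cor: constant on complement of retraction} within $\Omega^n$ gives $\bracket{n,n}_{\Omega^n} = \bracket{n,n}_{R'} \in \mathbb{Z}$, so by Proposition~\ref{prop: limit computed by support} the limit $\bracket{n,n}_\Omega$ is a finite integer $\ell$. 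The remark immediately after Definition~\ref{def: tits form limit} then yields a finite IC subquiver $\Omega^*$ of $\Omega$ with $\bracket{n,n}_{\Omega^*} = \ell$, as required.

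For part (2), I would select a finite IC subquiver $\Omega'$ of $\Omega^n$ that contains all cycles of $\Omega^n$ (possible since $\Omega^n$ has finite first homology) and on which $n$ is anticonstant, combining the two requirements via Lemma~\ref{lem: finite subquivers are contained in finite IC subquivers}. Because $\Omega^n$ has infinitely many components while $\Omega'$ touches only finitely many, for any $N$ I can choose $N$ components $C_1, \ldots, C_N$ of $\Omega^n$ disjoint from $\Omega'$ and a vertex $v_j \in C_j$ from each; the subquiver $\Omega''_N$ obtained from $\Omega'$ by adjoining these $N$ isolated vertices is still a finite IC subquiver of $\Omega^n$ on which $n$ remains anticonstant. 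No new arrows are added, and each $v_j$ contributes $n_{v_j}^2 \ge 1$ to the Tits form, so $\bracket{n,n}_{\Omega''_N} \ge \bracket{n,n}_{\Omega'} + N$, forcing the net to diverge to $+\infty$. For part (3), I would pick a finite BC subquiver $\Omega'$ of $\Omega^n$ on which $n$ is anticonstant (possible since $\Omega^n$ has finite zeroth homology). Because $\Omega^n$ has infinitely many independent cycles distributed among finitely many components, for each $N$ I can find a finite IC subquiver $\Omega''_N \supseteq \Omega'$ of $\Omega^n$ whose first homology exceeds that of $\Omega'$ by at least $N$. Using the step decomposition from Lemma~\ref{ref: sequence of IC subquivers} (with type (3) moves excluded since BC is preserved), each type (1) move adds a vertex $v$ and an arrow $a$ both outside $\Omega'$; anticonstancy forces $n_v$ to equal the value $c$ at the other endpoint of $a$, contributing $c^2 - c^2 = 0$ to the Tits form. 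Each type (2) move adds a single arrow outside $\Omega'$ on which $n$ is constant with value $c \neq 0$, changing the Tits form by $-c^2 \le -1$. Since increasing the first homology by $N$ requires at least $N$ type (2) moves, $\bracket{n,n}_{\Omega''_N} \le \bracket{n,n}_{\Omega'} - N$, and the net diverges to $-\infty$.

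The principal obstacle is arranging, in a single finite $\Omega'$, the simultaneous conditions (anticonstancy, BC-ness, and either containment of all cycles or containment of a vertex from each component) needed to invoke Lemma~\ref{lem: constant on complement of chain of quivers} or its direct computational analog; once this setup is in place the integrality of $n$ and its nowhere-vanishing on $\Omega^n$ force the quantitative $\pm 1$ change of the Tits form at each enlargement, which is what converts the qualitative strict-inequality statements of Lemma~\ref{lem: constant on complement of chain of quivers} into actual divergence of the net.
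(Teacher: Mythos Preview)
Your overall strategy matches the paper's: reduce to $\Omega^n$ via Proposition~\ref{prop: limit computed by support}, then exploit the combinatorics of IC subquivers and Lemma~\ref{lem: constant on complement of chain of quivers}. Two genuine gaps remain.

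In part (1), the assertion that the full subquiver $R'$ on the vertices of $R \cup Q$ is again a retraction is false in general. If $Q$ contains a vertex $v$ lying in some component $C$ of $\Omega^n$ but $v$ is not adjacent (in $\Omega^n$) to any vertex of $R \cup Q$ already in $C$, then $R'$ acquires an extra component inside $C$ and fails to be IC. (Concretely: take $\Omega^n$ the path $1\text{--}2\text{--}3\text{--}4\text{--}5$, $R=\{3\}$, $Q=\{1,5\}$; then $R'$ has three components.) The paper avoids this by applying Lemma~\ref{lem: finite subquivers are contained in finite IC subquivers} to $R \cup Q$ to produce a finite IC subquiver $\Omega^*$; since $\Omega^*$ is IC and contains the retraction $R$, it is itself a retraction, and Corollary~\ref{cor: constant on complement of retraction} applies.

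In parts (2) and (3), exhibiting a sequence $\Omega''_N$ with $\bracket{n,n}_{\Omega''_N} \to \pm\infty$ does not by itself force the net limit to be $\pm\infty$: a net can have a cofinal sequence diverging to $+\infty$ while still failing to converge. What you are missing is the eventual one-sided bound. Once $\Omega'$ is chosen to contain all cycles (resp.\ to be BC) and to be $n$-anticonstant, each $\Omega''_N \supseteq \Omega'$ inherits both properties, so Lemma~\ref{lem: constant on complement of chain of quivers} gives $\bracket{n,n}_{\Lambda} \geq \bracket{n,n}_{\Omega''_N}$ (resp.\ $\leq$) for \emph{every} finite IC $\Lambda \supseteq \Omega''_N$. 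This is precisely the step that upgrades ``arbitrarily large on a cofinal family'' to ``net limit equals $\pm\infty$'', and the paper states it explicitly; your final paragraph gestures at Lemma~\ref{lem: constant on complement of chain of quivers} but does not deploy it in this way.
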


\begin{proof}
First of all, by Proposition \ref{prop: limit computed by support} we have $\bracket{n , n}_\Omega = \bracket{n , n}_{\Omega^n}$. If $n$ is eventually constant on $\Omega$, it is eventually constant on $\Omega^n$, hence we may assume without loss of generality that $\Omega^n = \Omega$.
\begin{enumerate}

\item If $\Omega$ has finite zeroth and first homology, by Lemma \ref{lem: finite homology equals finite retraction}, $\Omega$ has a finite retraction, call it $\Omega'$. Furthermore, because $n$ is eventually constant on $\Omega$, there exists a finite subquiver $\Omega''$ of $\Omega$ such that $n$ is constant on all arrows that are not in $\Omega''$. Let $\Omega^*$ be a finite IC subquiver which contains $\Omega' \cup \Omega''$. Then $n$ is constant on all arrows not in $\Omega^*$ and $\Omega^*$ is IC and contains a retraction of $\Omega$, hence it is itself a retraction. By Corollary \ref{cor: constant on complement of retraction}, $\bracket{n , n}_{\Omega} = \bracket{n,n}_{\Omega^*}$.

\item If $\Omega$ has finite first homology, it has a finite subquiver containing all cycles. Furthermore there is a finite subquiver such that $n$ is constant on all arrows not contained in it. Hence by Lemma \ref{lem: finite subquivers are contained in finite IC subquivers} there exists a finite IC subquiver $\Omega^*$ containing both which therefore contains all cycles and has the property that $n$ is constant on all arrows outside of it. Then every finite IC subquiver $\Omega'$ containing $\Omega^*$ has $\bracket{n , n}_{\Omega'} \geq \bracket{n , n}_{\Omega^*}$ by Lemma \ref{lem: constant on complement of chain of quivers}, and because $\Omega$ has infinitely many components and $\Omega^*$ only has finitely many, there is an infinite sequence of finite IC subquivers $\Omega^* = \Lambda^1 \subseteq \Lambda^2 \subseteq \ldots$ where each $\Lambda^{p + 1}$ has more components than $\Lambda^p$ and hence again by Lemma \ref{lem: constant on complement of chain of quivers}, $\bracket{n , n}_{\Lambda^p} < \bracket{n , n}_{\Lambda^{p + 1}}$, so $\bracket{n , n}_{\Omega} = +\infty$.

%\item If $\Omega$ has finite zeroth and first homology, by Lemma \ref{lem: finite homology equals finite retraction} $\Omega$ has a finite retraction, call it $\Omega'$. Furthermore, because $n$ is eventually constant on $\Omega$, there exists a finite $n$-anticonstant subquiver $\Omega''$ of $\Omega$. Let $\Omega^*$ be a e $\Omega$ has infinitely many components and $\Omega^*$ only has finitely many, there is an infinite sequence of finite IC subquivers $\Omega^* = \Lambda^1 \subseteq \Lambda^2 \subseteq \ldots$ where each $\Lambda^{p + 1}$ has more components than $\Lambda^p$ and hence again by Lemma \ref{lem: constant on complement of chain of quivers}, $\bracket{n , n}_{\Lambda^p} < \bracket{n , n}_{\Lambda^{p + 1}}$, so $\bracket{n , n}_{\Omega} = +\infty$. 

\item If $\Omega$ has finite zeroth homology there is a finite subquiver which contains at least one vertex from every connected component of $\Omega$. Furthermore there is a finite subquiver outside of which $n$ is constant on arrows, hence by Lemma \ref{lem: finite subquivers are contained in finite IC subquivers} there is a finite IC subquiver $\Omega^*$ containing these, which is then BC as well and $n$ is constant on arrows outside of it. Then every finite IC subquiver $\Omega'$ containing $\Omega^*$ has $\bracket{n , n}_{\Omega'} \leq \bracket{n , n}_{\Omega^*}$ by Lemma \ref{lem: constant on complement of chain of quivers}, and because $\Omega$ has infinitely many cycles and $\Omega^*$ only has finitely many, there is an infinite sequence of finite IC subquivers $\Omega^* = \Lambda^1 \subseteq \Lambda^2 \subseteq \ldots$ where each $\Lambda^{p + 1}$ contains a cycle not in $\Lambda^p$ and hence again by Lemma \ref{lem: constant on complement of chain of quivers}, $\bracket{n , n}_{\Lambda^p} > \bracket{n , n}_{\Lambda^{p + 1}}$, so $\bracket{n , n}_{\Omega} = -\infty$. 
\end{enumerate}
\end{proof}

\begin{corollary}
    If $\Omega$ has either finite zeroth or first homology then for any element $n$ of the root space of $\Omega$, the limit $\bracket{n , n}_\Omega$ converges to an element of $\mathbb{Z} \cup \{ \pm \infty \}$. 
\end{corollary}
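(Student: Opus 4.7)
The plan is to reduce everything to Proposition~\ref{lpr: convergence of roots based on supports} applied to the support $\Omega^n$, with the only substantive step being to transfer the finite-homology hypothesis from $\Omega$ to $\Omega^n$.

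By Proposition~\ref{prop: limit computed by support}, $\bracket{n,n}_\Omega = \bracket{n,n}_{\Omega^n}$ in the sense that one converges if and only if the other does and they have the same value, so it suffices to show that $\bracket{n,n}_{\Omega^n}$ converges in $\mathbb{Z} \cup \{\pm\infty\}$. Proposition~\ref{lpr: convergence of roots based on supports} gives a trichotomy for $\Omega^n$: finite zeroth and first homology yield a finite integer, finite first and infinite zeroth yield $+\infty$, and finite zeroth and infinite first yield $-\infty$. The only situation not covered is when $\Omega^n$ has both infinite zeroth and infinite first homology, so the entire proof reduces to ruling this out.

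The case of finite first homology transfers immediately: since $\Omega^n$ is a (full) subquiver of $\Omega$, every cycle of $\Omega^n$ is a cycle of $\Omega$, so if $\Omega$ has only finitely many cycles then so does $\Omega^n$. The finite zeroth homology case requires slightly more care because passing to a full subquiver can in principle split one component into many, but this is controlled by the root-space condition. Indeed, $n$ is constant on all but finitely many arrows $a_1, \ldots, a_k$; removing these from $\Omega$ produces a quiver with at most (components of $\Omega$)$+k$ components, and on each such component $n$ is constant, either identically zero or identically nonzero. Then $\Omega^n$ is precisely the union of the ``nonzero'' components, together with whichever of the $a_i$ connect two such components, and hence has at most this many components. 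So finite zeroth homology of $\Omega$ forces finite zeroth homology of $\Omega^n$.

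In either hypothesis on $\Omega$, $\Omega^n$ lands in one of the three cases of Proposition~\ref{lpr: convergence of roots based on supports}, so $\bracket{n,n}_{\Omega^n}$ converges in $\mathbb{Z} \cup \{\pm\infty\}$, and the corollary follows. The main (and essentially only) obstacle is the bookkeeping for the zeroth-homology case; everything else is a direct invocation of earlier results.
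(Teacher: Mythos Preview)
Your proposal is correct and follows essentially the same approach as the paper: reduce to $\Omega^n$ via Proposition~\ref{prop: limit computed by support}, show that the relevant finite-homology hypothesis passes from $\Omega$ to $\Omega^n$, and then invoke Proposition~\ref{lpr: convergence of roots based on supports}. The only difference is cosmetic: for the zeroth-homology case the paper argues by choosing a finite $n$-anticonstant subquiver $\Omega'$ and connecting every vertex of $\Omega^n$ to $\Omega'$ by a path along which $n$ is constant, whereas you delete the finitely many non-constant arrows and count components of the resulting quiver---both are straightforward bookkeeping arguments yielding the same bound.
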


\begin{proof}
    Given an element $n$ of the root space of $\Omega$, if $\Omega$ has finite first homology then it is clear that $\Omega^n$ does too. On the other hand we claim that if $\Omega$ has finite zeroth homology so does $\Omega^n$. Indeed, if this is not the case then there must be a single component of $\Omega$ which contains infinitely many components of $\Omega^n$. Therefore we may assume without loss of generality that $\Omega$ has one component. Then there exists a finite subquiver $\Omega'$ of $\Omega$ on which $n$ is anticonstant, hence given any vertex $i$ of $\Omega^n$, there is a minimal path from $i$ to a vertex $j$ in $\Omega'$ whose arrows are not contained in $\Omega'$. Hence $n$ is constant along each of these arrows, so the path is in $\Omega^n$. Therefore $i$ is in the same component of $\Omega^n$ as $j$. Since $\Omega'$ is finite, this implies there can only be finitely many components of $\Omega^n$ as desired. 
\end{proof}

\begin{remark}
  If the zeroth and first homology of $\Omega$ are finite, then it follows that the Euler form is well-defined on roots, though we do not need this result for what follows.
\end{remark}

\begin{definition}
  A quiver is called \emph{positive definite} if, for each nonzero element $n$ of the root space for which the Tits form converges, $\bracket{n , n}_{\Omega} > 0$ or $+\infty$.
\end{definition}

\begin{lemma}\label{lm:Tits positive definite}
If $\Omega$ is a quiver  the Tits form on $\Omega$ is positive definite  if and only if the Tits form on every finite IC subquiver of $\Omega$ is positive definite.
\end{lemma}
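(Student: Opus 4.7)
The proof splits into two directions. For the backward direction, assume every finite IC subquiver of $\Omega$ is positive definite and let $n$ be a nonzero element of the root space of $\Omega$ with $\bracket{n,n}_\Omega$ converging. After picking a vertex $i_0$ with $n_{i_0} \neq 0$, observe that the finite IC subquivers of $\Omega$ containing $i_0$ form a cofinal subset of $\mathcal{I}(\Omega)$: given any $\Omega' \in \mathcal{I}(\Omega)$, the finite subquiver $\Omega' \cup \{i_0\}$ lies in some finite IC subquiver by Lemma~\ref{lem: finite subquivers are contained in finite IC subquivers}. On this cofinal subnet every restriction $n|_{\Omega'}$ is nonzero at $i_0$, so positive definiteness of $\Omega'$ gives $\bracket{n,n}_{\Omega'} \geq 1$. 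Since every term is $\geq 1$, the limit $\bracket{n,n}_\Omega$ must be either a positive integer or $+\infty$, establishing positive definiteness of $\Omega$.

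For the forward direction, assume $\Omega$ is positive definite, let $\Omega'$ be a finite IC subquiver of $\Omega$, and let $m$ be a nonzero element of its root space. Extend $m$ by zero to $n$ on $\Omega$. Since $n$ has finite support, the subquiver $\Omega^n$ is finite, so Proposition~\ref{lpr: convergence of roots based on supports}(1) guarantees convergence of $\bracket{n,n}_\Omega$, and Proposition~\ref{prop: limit computed by support} gives $\bracket{n,n}_\Omega = \bracket{n,n}_{\Omega^n}$, the ordinary finite-quiver Tits form. Positive definiteness of $\Omega$ then forces $\bracket{n,n}_{\Omega^n} > 0$. The plan is to identify this value with $\bracket{m,m}_{\Omega'}$; a direct calculation shows these two quantities differ by $\sum m_{s(a)} m_{t(a)}$ taken over arrows of $\Omega$ whose endpoints both lie in the support of $m$ but which are not in $\Omega'_1$, and this correction vanishes precisely when $\Omega'$ is a full subquiver of $\Omega$ on the support of $m$.

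The main obstacle is securing this fullness, and the plan is to do so by first showing that positive definiteness of $\Omega$ forces the underlying undirected graph of $\Omega$ to be a forest. Indeed, if it contained a cycle, then the indicator function of the component $V$ of $\Omega$ containing the cycle would lie in the root space (it is constant on every arrow of $\Omega$), and by Proposition~\ref{prop: limit computed by support} together with Proposition~\ref{lpr: convergence of roots based on supports}(1) or (3) the Tits form would compute to $1$ minus the first Betti number of $V$ (or $-\infty$ if $V$ has infinitely many independent cycles), which is non-positive and contradicts positive definiteness. Once $\Omega$ is known to be a forest, any connected subquiver is a subtree and hence full in its ambient tree by uniqueness of paths in trees, so every IC subquiver of $\Omega$ is full. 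Under this reduction, $\Omega^n$ coincides with the minimal full subquiver of $\Omega'$ containing the support of $m$, and Lemma~\ref{lem: tits form computed on support finite} applied within $\Omega'$ yields $\bracket{m,m}_{\Omega'} = \bracket{n,n}_{\Omega^n} > 0$, completing the forward direction.
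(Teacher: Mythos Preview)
Your backward direction is essentially identical to the paper's: both restrict to a cofinal family of finite IC subquivers meeting the support of $n$ and use that the restricted Tits forms are all $\geq 1$ there.

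Your forward direction is correct but takes a genuinely different route from the paper. The paper argues by contradiction: given $m$ on $\Omega'$ with $\bracket{m,m}_{\Omega'}\le 0$, it passes to the union $\Omega''$ of the components of $\Omega$ meeting $\Omega'$, uses the indicator-of-a-component trick only to show $\Omega''$ has \emph{finite} first homology, builds a finite retraction $\Omega^*\supseteq\Omega'$, and then extends $m$ in two stages---by zero to $\Omega^*$ and by \emph{constants along paths} out to all of $\Omega''$---so as to invoke Corollary~\ref{cor: constant on complement of retraction} and obtain $\bracket{n,n}_\Omega=\bracket{m,m}_{\Omega'}$. You instead push the indicator argument one step further to conclude that every component of $\Omega$ is a tree (if $b_1(V)\ge 1$ then the indicator of $V$ has Tits form $1-b_1(V)\le 0$, or $-\infty$), and then exploit the pleasant fact that in a forest every IC subquiver is automatically full; this lets you extend $m$ by zero alone and identify $\Omega^n$ with the support of $m$ inside $\Omega'$.

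Your approach is more elementary and sidesteps the retraction machinery entirely. The paper's extension-by-constants is more elaborate but produces a root with infinite support, illustrating that failure of positive definiteness on a finite piece propagates to a genuinely infinite root. One small point: when you assert the Tits form of the indicator equals $1-b_1(V)$, the cleanest citation is Corollary~\ref{cor: constant on complement of retraction} (together with Lemma~\ref{lem: finite homology equals finite retraction}) rather than Proposition~\ref{lpr: convergence of roots based on supports}(1), since the latter only asserts existence of \emph{some} finite IC subquiver computing the limit without saying it is a retraction.
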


\begin{proof}
Suppose that the Tits form is positive definite on every finite IC subquiver of $\Omega$ and let $n$ be a nonzero element of the root space of $\Omega$. Then for any finite IC subquiver $\Omega'$ of $\Omega$ we have $\bracket{n , n}_{\Omega'} \geq 0$ with the inequality strict if $\Omega' \cap \Omega^n \neq \emptyset$. Since $\bracket{n , n}_\Omega$ is the limit of non-negative integers, it must be non-negative itself if it exists. Suppose for contradiction that $\bracket{n , n}_\Omega = 0$. Then by definition there exists $\Omega' \in \mathcal{I}(\Omega)$ such that $\bracket{n , n}_{\Omega''} = 0$ for all $\Omega'' \in \mathcal{I}(\Omega)$ containing $\Omega'$. But since $\Omega^n$ is nonempty, by Lemma \ref{lem: finite subquivers are contained in finite IC subquivers} there exists a finite IC subquiver $\Omega''$ of $\Omega$ containing $\Omega'$ and a single vertex from $\Omega^n$. Then $\bracket{n , n}_{\Omega''} \neq 0$ by hypothesis, contradiction.

Conversely, suppose that the Tits form on $\Omega$ is positive definite. If $\Omega'$ is a finite IC subquiver which does not have a positive-definite Tits form, there exists an element $n \neq 0$ of the root space of $\Omega'$ such that $\bracket{n , n}_{\Omega'} \leq 0$. Since $\Omega'$ is finite, it only intersects finitely many components of $\Omega$. Let $\Omega''$ be the union of these components. We claim that $\Omega''$ must have finite first homology. Indeed if not, then one component $\Omega^\dagger$ of $\Omega''$ (and thus of $\Omega$) has infinite first homology, and let $n$ be the element of the root space of $\Omega$ defined by $n_i = 1$ for all $i \in \Omega^\dagger_0$ and $n_i = 0$ for all other $i$. Then $\Omega^n = \Omega^\dagger$, hence by Proposition \ref{lpr: convergence of roots based on supports}, we have $\bracket{n , n}_\Omega = -\infty$, contradiction. Thus $\Omega''$ has finite zeroth and first homology, so by Lemma \ref{lem: finite homology equals finite retraction}, there is a finite retraction $\Omega^*$ of $\Omega''$ which we can assume contains $\Omega'$.

%Conversely, suppose that the Tits form on $\Omega$ is positive definite. We claim that $\Omega$ must have finite first homology. Indeed if not, then one component $\Omega^\dagger$ has infinite first homology \hilight{what about infinitely many disjoint loops?}, and let $n$ be the element of the root space defined by $n_i = 1$ for all $i \in \Omega^\dagger_0$, then $\Omega^n = \Omega^*$ hence by Proposition \ref{lpr: convergence of roots based on supports}, we have $\bracket{n , n}_\Omega = -\infty$, contradiction. Now if $\Omega'$ is a finite IC subquiver which does not have a positive-definite Tits form, there exists an element $n \neq 0$ of the root space of $\Omega'$ such that $\bracket{n , n}_{\Omega'} \leq 0$. Since $\Omega'$ is finite, it only intersects finitely many components of $\Omega$. Let $\Omega''$ be the union of these components. Then $\Omega''$ has finite zeroth and first homology, so by Lemma \ref{lem: finite homology equals finite retraction}, there is a finite retraction $\Omega^*$ of $\Omega''$ which we can assume contains $\Omega'$. 

Extend $n$ to $\Omega^*$ by defining it to be zero outside of $\Omega'$. Then we have $\bracket{n , n}_{\Omega^*} = \bracket{n , n}_{\Omega'}$ by Lemma \ref{lem: tits form computed on support finite}. Because $\Omega^*$ is a retraction, each vertex $i$ of $\Omega''$ is connected to $\Omega^*$ by a unique minimal path ending at a vertex $j$. Extend $n$ to $\Omega''$ by letting $n_i = n_j$. Then because $\Omega^*$ is finite, $n$ is constant along all but finitely many arrows so it is an element of the root space of $\Omega''$, and furthermore we have $\bracket{n , n}_{\Omega''} = \bracket{n , n}_{\Omega^*}$ by Lemma \ref{cor: constant on complement of retraction}. Finally extend $n$ to all of $\Omega$ by letting it be zero on all vertices not in $\Omega''$. This is clearly still an element of the root space of $\Omega$, and since the support of $n$ is contained in $\Omega''$ by Proposition \ref{prop: limit computed by support} we have $\bracket{n , n}_{\Omega} = \bracket{n , n}_{\Omega''}$. 

Hence all together we have a nonzero element $n$ of the root space of $\Omega$ with the property that $\bracket{n , n}_{\Omega} = \bracket{n , n}_{\Omega'} \leq 0$, contradiction.

%But if we let $m$ equal $n$ on $\Omega'$ and $0$ everywhere else, then $m \neq 0$ and by  we have  $\bracket{m , m}_{\Omega} = \bracket{m , m}_{\Omega'} = \bracket{n , n}_{\Omega} \leq 0$ so the Tits form is not positive definite on $\Omega'$ either. 

%If $\Omega$ has infinite first homology, then it is not positive definite because the root assigning $1$ to every vertex has negative infinite Tits form.  If $\Omega$ has finite first homology, suppose $\Omega'$ is a finite subquiver and $n$ is a root on $\Omega'$ with $\bracket{n,n}_{\Omega'} \leq 0$. Extend it to a larger finite subquiver that is a retraction of $\Omega$ by assigning $n_i=0$ to all new vertices, and then extend it through the retraction so as to make it constant on the retraction \hilight{I don't quite understand what this means}. The result is a root with  $\bracket{n,n}_{\Omega}=\bracket{n,n}_{\Omega'} \leq 0$.

\end{proof}

\begin{proposition}\label{prop: positive tits form if and only if infinite dynkin}
The Tits form is positive definite on the root space of $\Omega$ if and only if the underlying graph of every component of $\Omega$ is of the form  $A_n$, $D_n$, $E_6$, $E_7$, or $E_8$,  $A_\infty$, $A_{\infty, \infty}$, or $D_\infty$).     
\end{proposition}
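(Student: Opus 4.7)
The plan is to invoke Lemma~\ref{lm:Tits positive definite} to reduce both directions to a statement about finite IC subquivers, and then bootstrap the classical (finite) Gabriel--Tits classification: a finite connected quiver is positive definite iff its underlying graph is $A_n$, $D_n$, or $E_n$, and moreover any finite connected graph which is not of ADE Dynkin type contains, as a subgraph, one of the affine Dynkin diagrams $\tilde A_n$, $\tilde D_n$, or $\tilde E_n$, each of which admits a positive null root and hence fails to be positive definite.

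For the ($\Leftarrow$) direction, assume each component of $\Omega$ is generalized ADE Dynkin and let $\Omega'$ be a finite IC subquiver. Because $\Omega'$ is IC, each of its components lies inside a single component of $\Omega$, whose underlying graph is one of $A_n, D_n, E_n, A_\infty, A_{\infty,\infty}, D_\infty$. Inspection of these underlying graphs shows every finite connected subgraph is itself $A_m$, $D_m$, or $E_m$. By the classical theorem each component of $\Omega'$ carries a positive definite Tits form, and since the Tits form is block-diagonal across components, so does $\Omega'$. Lemma~\ref{lm:Tits positive definite} then yields positive definiteness on $\Omega$.

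For the ($\Rightarrow$) direction, assume some component $C$ of $\Omega$ has underlying graph outside the listed types. The goal is to produce a finite connected subquiver of $C$ whose underlying graph is an affine Dynkin diagram; this subquiver is then not positive definite, contradicting Lemma~\ref{lm:Tits positive definite}. The case split on $C$ is: (i) if $C$ contains a cycle, that cycle is a finite subquiver with underlying graph $\tilde A_n$; (ii) if $C$ is a tree with a vertex of valence $\geq 4$, that vertex together with four of its neighbors gives $\tilde D_4$; (iii) if $C$ is a tree of maximum valence $3$ with at least two trivalent vertices $u, v$, then a shortest path from $u$ to $v$ together with one additional leaf neighbor of $u$ and one of $v$ gives $\tilde D_n$ for appropriate $n$; (iv) if $C$ is a tree with exactly one trivalent vertex, the three arm lengths $(a,b,c)$ with $a \le b \le c$ (allowing $\infty$) avoid the generalized ADE patterns $(1,1,\ast)$, $(1,2,2)$, $(1,2,3)$, $(1,2,4)$, so they dominate one of $(2,2,2)$, $(1,3,3)$, $(1,2,5)$, which produce $\tilde E_6$, $\tilde E_7$, $\tilde E_8$ respectively as finite subquivers; (v) the remaining case, $C$ a tree of maximum valence $\le 2$, makes $C$ itself one of $A_n$, $A_\infty$, $A_{\infty,\infty}$ and so does not occur.

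The main obstacle is the combinatorial bookkeeping in case (iv): one must verify that the only trees with a single trivalent vertex whose arm-length triple does \emph{not} dominate one of $(2,2,2)$, $(1,3,3)$, $(1,2,5)$ are exactly the $D_n$, $E_6$, $E_7$, $E_8$, and $D_\infty$ quivers. Once that finite check is in place, the rest of the argument is a direct reduction via Lemma~\ref{lm:Tits positive definite} to the classical ADE/affine dichotomy.
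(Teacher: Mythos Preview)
Your proposal is correct and follows essentially the same strategy as the paper: reduce via Lemma~\ref{lm:Tits positive definite} to finite subquivers, then use the classical ADE classification to rule out forbidden configurations. The paper's own proof is considerably terser---it simply notes that an infinite positive-definite component cannot contain a cycle, a vertex of valence $\geq 4$, two trivalent vertices, or a trivalent vertex with two arms of length $>1$---whereas you make the same case split explicit and name the specific affine Dynkin subgraph ($\tilde A_n$, $\tilde D_n$, $\tilde E_n$) witnessing non-positivity in each case; your version is more detailed but not a genuinely different route.
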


\begin{proof}
It is positive definite if and only if every component is positive definite.  If a component of $\Omega$ is finite then we know classically it has one of the finite graphs listed. If it is infinite, then it cannot contain any loops or it would have a finite subgraph which is not on that list and therefore not positive definite.  It cannot contain any quadravalent graph, or two tri-valent graphs, or a tri-valent graph with two of the journeys coming out of it longer than $1$.  The only possiblilites left are $D_\infty$ if it contains a trivalent vertex, or $A_\infty$ or $A_{\infty, \infty}$ if it does not. 
\end{proof}

\begin{definition}
    We say that  quiver $\Omega$ is of generalized \emph{ADE  Dynkin type} if $\langle \cdot , \cdot \rangle_\Omega$ is positive definite on the root space. 
\end{definition}

Observe that every FLEI representation of a quiver determines a natural number-valued root called its \emph{dimension vector} that assigns to each vertex the dimension of the associated vector space. We say that a root $n$ of $\Omega$ is a \emph{positive root} if each $n_i \geq 0$ and $\bracket{n,n}_{\Omega} = 1$. 

\begin{proposition} \label{pr: positive roots}
If $\Omega$ is of generalized ADE Dynkin type and eventually outward, then every indecomposable representation has dimension vector a positive root, and every positive root of $\Omega$ is the dimension vector of an indecomposable FLEI representation unique up to isomorphism.
\end{proposition}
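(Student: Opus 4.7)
The plan is to reduce both halves to the classical Gabriel theorem on finite ADE Dynkin quivers via the equivalence of categories $\mathscr{F}_{\Omega^*},\mathscr{G}_{\Omega^*}$ of Lemma~\ref{lem:equivalence of categories}. Since representations of a quiver decompose as products over its components, and the Tits form decomposes as a sum over components, I may assume $\Omega$ is connected, so its underlying graph is one of $A_n$, $D_n$, $E_6, E_7, E_8, A_\infty, A_{\infty,\infty}, D_\infty$, in particular a tree with no cycles.

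For the first claim, let $V$ be an indecomposable representation of $\Omega$ with dimension vector $n$. Because $\Omega$ is connected, eventually outward, and of generalized ADE Dynkin type, the results of Section~\ref{sec: reflection functors} (together with the $A_{\infty,\infty}$ case from \cite{gallup2022decompositions} and classical Gabriel in the finite case) make Theorem~\ref{thm: decomp of D infinity reps} applicable, so $V$ is a direct sum of FLEI indecomposables; indecomposability forces $V$ itself to be FLEI. I then pick a finite connected full subquiver $\Omega^*$ of $\Omega$ containing every arrow on which $V$ fails to be an isomorphism; since $\Omega$ is a tree and $\Omega^*$ is connected, $\Omega^*$ is automatically a retraction. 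By Lemma~\ref{lem:equivalence of categories}, $\mathscr{F}_{\Omega^*}(V)$ is an indecomposable FLEI representation of $\Omega^*$, and $\Omega^*$ is a finite ADE Dynkin quiver, so classical Gabriel gives $\langle n|_{\Omega^*},n|_{\Omega^*}\rangle_{\Omega^*}=1$. As $n$ is constant along every arrow of $\Omega\setminus\Omega^*$, Corollary~\ref{cor: constant on complement of retraction} transports this equality to $\langle n,n\rangle_\Omega=1$, so $n$ is a positive root.

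For the second claim, let $n$ be a positive root of $\Omega$. I first argue that $\Omega^n$ is connected: if not, write $\Omega^n=\Omega^{n_1}\sqcup\Omega^{n_2}$ as a disjoint union of nonempty open-and-closed pieces, and extend $n_1,n_2$ to $\Omega$ by zero. Every arrow of $\Omega$ joining $\Omega^{n_1}$ to $\Omega^{n_2}$ must leave $\Omega^n$ (so at least one endpoint carries value $0$), which makes the cross terms in~\eqref{eq: tits form} vanish and yields $\langle n,n\rangle_\Omega=\langle n_1,n_1\rangle_\Omega+\langle n_2,n_2\rangle_\Omega$; by positive-definiteness (Proposition~\ref{prop: positive tits form if and only if infinite dynkin}) each summand is a strictly positive integer, contradicting the total being $1$. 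I then choose a finite connected full subquiver $\Omega^*$ containing every arrow along which $n$ is non-constant; $\Omega^*$ is again a retraction, and by Corollary~\ref{cor: constant on complement of retraction} the restriction $n|_{\Omega^*}$ is a positive root of the finite ADE Dynkin quiver $\Omega^*$. Classical Gabriel produces an indecomposable $V^*\in\operatorname{rep}_{\text{FLEI}}(\Omega^*)$ with dimension vector $n|_{\Omega^*}$, and $V:=\mathscr{G}_{\Omega^*}(V^*)$ is by Lemmas~\ref{lem:equivalence of categories} and~\ref{lem:indecomposables are the same } an indecomposable FLEI representation of $\Omega$; its dimension vector equals $n$ on $\Omega^*$ and is extended by the closest-vertex rule, which by choice of $\Omega^*$ matches $n$ everywhere.

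Uniqueness is handled the same way: if $V,W$ are indecomposable with dimension vector $n$, both are FLEI by paragraph one, and I can enlarge $\Omega^*$ to simultaneously contain every non-isomorphism arrow of $V$, of $W$, and every non-constant arrow of $n$; then $\mathscr{F}_{\Omega^*}(V)$ and $\mathscr{F}_{\Omega^*}(W)$ are indecomposable representations of a finite ADE Dynkin quiver with the same dimension vector, hence isomorphic by classical unique-type, and the equivalence of categories lifts this to $V\cong W$. The main technical obstacle is the connectedness claim for $\Omega^n$: the additive decomposition $\langle n,n\rangle_\Omega=\langle n_1,n_1\rangle_\Omega+\langle n_2,n_2\rangle_\Omega$ must be justified at the level of limits over $\mathcal{I}(\Omega)$, which really requires localizing via Proposition~\ref{prop: limit computed by support} and checking that positive-definiteness is available on each piece, since a priori the Tits form on a sub-support could take the value $+\infty$.
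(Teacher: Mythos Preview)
Your proof is correct and follows the same strategy as the paper's: reduce both directions to classical Gabriel on a finite ADE Dynkin retraction $\Omega^*$ via the equivalence $\FF_{\Omega^*},\GG_{\Omega^*}$ of Lemma~\ref{lem:equivalence of categories}, transferring the Tits form through Corollary~\ref{cor: constant on complement of retraction}. The paper compresses existence, the converse, and uniqueness into a single bidirectional sentence and simply starts from a FLEI representation; your explicit appeal to Theorem~\ref{thm: decomp of D infinity reps} to establish that arbitrary indecomposables are FLEI is a worthwhile clarification the paper leaves implicit. Your connectedness argument for $\Omega^n$, and the concern you flag about justifying the additive splitting of the limit Tits form, are unnecessary detours: once $\Omega^*$ is any finite connected retraction containing all non-constant arrows of $n$ (which exists because $\Omega$ itself is connected, independently of $\Omega^n$), the equality $\bracket{n,n}_{\Omega^*}=1$ and classical Gabriel already produce the indecomposable, and connectedness of the support then follows a posteriori from Lemma~\ref{lem: support of indecomposable is connected}.
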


\begin{proof}
  Let $V$ be a FLEI representation, let $n$ be its dimension vector (necessarily a root with nonnegative integer entries) and let $\Omega^*$ be a finite retraction such that all arrows outside it are mapped to isomorphisms, and consider the functor $\FF.$  By Lemma~\ref{lem:indecomposables are the same } and the fact that $\Omega^*$ is a retraction $\bracket{n,n}_{\Omega}=\bracket{n,n}_{\Omega^*}$.   By Lemma~\ref{lem:equivalence of categories}  $\FF_{\Omega^*}$ preserves indecomposability, and thus since $\FF_{\Omega^*}(V)$ is indecomposable if and only if its dimension vector has Tits form $1$ and is uniquely determined by its dimension vector, $V$ is indecomposable if and only if its dimension vector has Tits form $1$ and is uniquely determined by its dimension vector.
\end{proof}

%%%%%%%%%%%%%%%%%%%%%%%%%%%%%%%%%%%%%%%%%%%%%%%%%%%%%%%%

\section{Gabriel's Theorem for Infinite Quivers} \label{sec: infininte gabs theorem}

Recall that the represention theory of a quiver is of \emph{unique type} if indecomposables with the same dimension vector are isomorphic and  is \emph{infinite Krull-Schmidt} if every representation is uniquely a (possibly infinite) direct sum of indecomposables. 

%\begin{example}
%    The four valence star is not of unique type. 

%\begin{center}
%\begin{tikzcd}[arrows=-]
% \bullet \arrow[rd] & & \bullet \arrow[ld] \\
% &
%  \bullet  & \\
% \bullet \arrow[ru] & & \bullet \arrow[lu]
%\end{tikzcd}   
%\end{center}

%\end{example}

\begin{theorem}[Gabriel's Theorem for Infinite Quivers]\label{thm: infinite dimensional infinite gabriel's theorem }
    The category $\text{Reps}(\Omega)$ of a connected quiver $\Omega$ is of unique type and infinite Krull-Schmidt if and only if it is eventually outward and of generalized ADE Dynkin type. In this case all indecomposables of $\Omega$ are FLEI and in one-to-one correspondence with positive roots.  
\end{theorem}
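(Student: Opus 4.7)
The plan is to prove each direction of the biconditional separately, combining results already in the paper with classical and previously known results.

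For the forward direction, assume $\Omega$ is connected, eventually outward, and of generalized ADE Dynkin type. I would proceed case-by-case on the type. For finite $A_n$, $D_n$, $E_n$ this is classical Gabriel. For $A_\infty$ and $A_{\infty,\infty}$ this is the main result of \cite{gallup2022decompositions} (the $A_\infty$ case being a direct specialization). For $D_\infty$ it is Theorem A of the present paper, whose proof combines the well-foundedness of $\operatorname{indec}_{\text{FLEI}}(\Omega)$ established in Section~\ref{sec: reflection functors} with Theorem~\ref{thm: decomp of D infinity reps}. In each case every representation decomposes uniquely, up to isomorphism and reordering, as a direct sum of FLEI indecomposables, and Proposition~\ref{pr: positive roots} then supplies the bijection with positive roots and the unique-type property.

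For the reverse direction I argue by contrapositive, splitting into two sub-cases. Suppose first that $\Omega$ is not of generalized ADE Dynkin type. By Proposition~\ref{prop: positive tits form if and only if infinite dynkin}, the underlying graph of $\Omega$ contains a finite subgraph outside the generalized ADE Dynkin list---for example a cycle, an extended Dynkin diagram $\tilde{A}$, $\tilde{D}$, or $\tilde{E}$, or a vertex of valence at least four. Choose a finite full subquiver $\Omega' \subset \Omega$ realizing such a subgraph. Classical representation theory (Gabriel's theorem for $\Omega'$, together with the theory of representations of Euclidean and wild quivers) produces two non-isomorphic indecomposable representations of $\Omega'$ with the same dimension vector. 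Their zero-extensions to $\Omega$ (well-defined since $\Omega'$ is full) remain indecomposable: any direct summand of such an extension restricts to a direct summand of the original on $\Omega'$ and is zero outside. They remain non-isomorphic since any isomorphism of the extensions restricts to an isomorphism of the originals. Their common dimension vector thus violates unique type.

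Now suppose $\Omega$ is of generalized ADE Dynkin type but not eventually outward. Since every finite quiver is vacuously eventually outward, $\Omega$ is $A_\infty$, $A_{\infty,\infty}$, or $D_\infty$, and must contain a journey with infinitely many arrows pointing toward its source. From this journey I would extract a sub-configuration isomorphic to the alternating $A_{\infty,\infty}$ appearing in the example at the end of Section~\ref{sec: FLEI}, and take $V$ to be the representation constructed in \cite{gallup2022decompositions} supported on this sub-configuration and extended by zero elsewhere. The main obstacle is verifying that this localized counterexample does not become decomposable once embedded in the ambient quiver; this is controlled by the fact that $V$ vanishes outside the bad sub-configuration, so any direct sum decomposition of $V$ in $\text{Reps}(\Omega)$ restricts to a direct sum decomposition inside the sub-configuration and inherits the original failure of infinite Krull-Schmidt. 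The FLEI-ness and positive-root correspondence in the final sentence of the theorem were already established in the forward direction, completing the proof.
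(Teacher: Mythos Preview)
Your overall strategy matches the paper's proof closely: case-by-case for the forward direction, and a two-pronged contrapositive for the reverse. Two points deserve tightening.

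First, in the finite ADE case you invoke only ``classical Gabriel,'' but classical Gabriel gives Krull--Schmidt only for finite-dimensional representations. The theorem concerns \emph{all} of $\text{Reps}(\Omega)$, including infinite-dimensional representations, and for these the infinite Krull--Schmidt property is due to Ringel \cite{ringel2016}; the paper cites this explicitly.

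Second, in the ``not eventually outward'' case you propose to extract a sub-configuration isomorphic to the alternating $A_{\infty,\infty}$. But a journey with infinitely many inward-pointing arrows need not literally contain an alternating subquiver: between two consecutive inward arrows there may be a long stretch of outward arrows, and the quiver structure does not let you simply delete these. The paper handles this more directly: it keeps the whole journey, assigns the bad representation from \cite{gallup2022decompositions} at the inward arrows, and fills in the outward arrows with isomorphisms (which effectively contracts them), then zeroes out everything off the journey. This avoids having to exhibit an actual alternating subquiver. Your restriction argument for why the failure persists in the ambient quiver is correct, but the construction of $V$ itself needs this adjustment.

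With these two fixes your argument coincides with the paper's.
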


\begin{proof}
Suppose that $\Omega$ is eventually outward and of generalized ADE Dynkin type. If $\Omega$ has underlying graph $A_n$, $D_n$, $E_6$, $E_7$, or $E_8$ then $\Omega$ is Krull-Schmidt  with a one-to-one correspondence between isomorphism classes of indecomposables and positive roots by Gabriel's theorem (see \cite{gabriel1972}) and it is completely reducible by \cite{ringel2016}. (Note that it is obvious that every finite-dimension representation of $\Omega$ is a finite direct sum of indecomposables, but here Ringel proved that any (possibly infinite-dimensional) representation of such an $\Omega$ is a (possibly infinite) direct sum of indecomposables.) Furthermore Gabriel's theorem gives the one-to-one correspondence between the positive roots and the indecomposables. If $\Omega$ has underlying graph $A_\infty$ or $A_{\infty, \infty}$, then the results of \cite{gallup2022decompositions} show that $\text{Reps}(\Omega)$ is of unique type, is infinite Krull-Schmidt, and its indecomposables are in one-to-one correspondence with the connected subquivers, which are clearly in bijection with the positive roots in this case. If $\Omega$ has underlying graph $D_\infty$, then the corresponding results are given by Theorem \ref{thm: decomp of D infinity reps} and Proposition \ref{pr: positive roots}. 

Conversely, suppose that $\Omega$ is not eventually outward. Then there is a journey in $\Omega$ beginning at a vertex $i$ which has infinitely many arrows pointing toward $i$. By assigning all vertices not on this journey the zero vector space, all vertices at the target of an an inward-pointing arrow along this journey according to the representation described in Section 8 of \cite{gallup2022decompositions}, and all outward-pointing arrows along this journey to be isomorphisms, we obtain a representation of $\Omega$ which is not completely reducible and hence the category is not Krull-Schmidt (see Corollary 8.2 of \cite{gallup2022decompositions}).

Finally if $\Omega$ is not of generalized ADE Dynkin type, then by Proposition \ref{prop: positive tits form if and only if infinite dynkin} the Tits form is not positive definite, so by Lemma \ref{lm:Tits positive definite} there is a finite subquiver $\Omega'$ on which the Tits form is not positive definite, so by the proof of Gabriel's theorem (see e.g. \cite{brion2008representations}), there are nonisomorphic indecomposable representations of  $\Omega'$ with the same dimension vector, i.e. $\text{Reps}(\Omega')$ is not of unique type. Extending this dimension vector by zero outside of $\Omega'$ and extending the corresponding indecomposables by zero as well gives that $\text{Reps}(\Omega)$ is not of unique type either. 
\end{proof}

%%%%%%%%%%%%%%%%%%%%%%%%%%%%%%%%%%%%%%%%%%%%%%%%%%%%%%%%

\section{Future Work} \label{sec: future work}

Theorem \ref{thm: infinite dimensional infinite gabriel's theorem } gives one possible generalization of Gabriel's theorem to infinite quivers, but a second option restricts to the category of locally finite-dimensional representations. We conjecture that in this case the ``eventually outward'' condition is not necessary. Precisely, we say that a quiver is \emph{locally finitely reducible} if every locally finite-dimensional representation of $\Omega$ is a direct sum of indecomposables. Then we make the following conjecture. 

\begin{conjecture}[Locally Finite-Dimensional Infinite Gabriel's Thm.]
For a connected quiver $\Omega$ the category $\text{Rep}_{\text{lf}}(\Omega)$ is of unique type and is infinite Krull-Schmit if and only if it is of generalized ADE Dynkin type. In this case all indecomposables of $\Omega$ are FLEI and in one-to-one correspondence with positive roots.
\end{conjecture}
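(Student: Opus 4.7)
My plan is to prove the two directions separately. The ``only if'' direction should follow essentially verbatim from the corresponding direction of Theorem~\ref{thm: infinite dimensional infinite gabriel's theorem }: if some component of $\Omega$ is not generalized ADE Dynkin type then by Lemma~\ref{lm:Tits positive definite} there is a finite subquiver on which the Tits form fails to be positive definite, so the classical Gabriel theorem supplies two nonisomorphic finite-dimensional indecomposables of that subquiver with the same dimension vector, and extending these by zero yields two finite-dimensional (hence locally finite-dimensional) indecomposables of $\Omega$ sharing a dimension vector.

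The substantive direction is to show that for connected generalized ADE Dynkin $\Omega$, every locally finite-dimensional $V$ decomposes uniquely as a direct sum of FLEI indecomposables in bijection with the positive roots. My strategy is to reproduce the Section~\ref{sec: poset filtration of subrepresentations} filtration argument for $V$ after replacing two \emph{global} hypotheses from Sections~\ref{sec: FLEI}--\ref{sec: reflection functors} with \emph{local}, dimension-bounded substitutes. First I would show that every indecomposable locally finite-dimensional representation is FLEI; for $A_\infty$, $A_{\infty,\infty}$, and $D_\infty$ I would analyze each journey separately, using the fact that along a journey the local dimensions form a bounded sequence of nonnegative integers to produce at any non-isomorphism arrow a complement that extends to a direct summand of $V$, and then iterate until what remains is supported on a finite subquiver.

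Second, I would establish a local version of well-foundedness: for a fixed $V$ in $\text{Rep}_{\text{lf}}(\Omega)$, the induced partial order on the indecomposable FLEI \emph{subrepresentations of $V$} is well-founded. The abstract argument in Sections~\ref{sec: partial order} and~\ref{sec: reflection functors} uses eventually outward crucially, but in the restricted setting we gain an a priori dimension bound: every FLEI indecomposable subrepresentation of $V$ has dimension vector pointwise dominated by $\dim V$. An infinite descending chain then has dimension vectors that must eventually stabilize and reduces to an infinite chain inside a single isomorphism class, which Lemma~\ref{lem: nonzero maps between isomorphic indecomposables are isomorphisms} rules out. With both substitutes in place, Theorem~\ref{thm: decomp of D infinity reps} applied to the subrepresentation poset of $V$ gives the decomposition $V = \bigoplus_\alpha G_\alpha$, and unique type together with the bijection with positive roots follows from Proposition~\ref{pr: positive roots}, whose proof uses only generalized ADE Dynkin type and the existence of finite retractions.

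The main obstacle I expect is the first step: that indecomposable locally finite-dimensional representations are forced to be FLEI without the eventually outward hypothesis. The naive ``peel off a summand at each non-isomorphism arrow'' strategy is delicate when $A_{\infty,\infty}$ or $D_\infty$ is oriented with infinitely many sink/source reversals, because summands extracted locally must be compatible along the full journey. A cleaner route would be to develop an infinite reflection-functor calculus that reflects simultaneously at an entire infinite family of sources or sinks, using local finite-dimensionality to guarantee convergence of the resulting direct system of functors; this appears to be the technical heart of the conjecture and will likely require genuinely new ideas beyond Sections~\ref{sec: reflection functors}--\ref{sec: poset filtration of subrepresentations}.
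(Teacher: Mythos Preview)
The statement you are attempting to prove is presented in the paper as a \emph{conjecture} (Section~\ref{sec: future work}), not a theorem; the paper offers no proof, so there is nothing to compare your proposal against. Your outline is therefore a proposed attack on an open problem, and you yourself flag the first step (every locally finite-dimensional indecomposable is FLEI without the eventually outward hypothesis) as the main obstacle requiring ``genuinely new ideas.'' That assessment matches the paper's: the conjecture is left open precisely because the techniques of Sections~\ref{sec: FLEI}--\ref{sec: poset filtration of subrepresentations} do not obviously extend.

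Beyond the gap you already identify, your second step also does not go through as written. You argue that an infinite descending chain of FLEI indecomposable subrepresentations of $V$ has dimension vectors pointwise bounded by $\dim V$ and therefore ``must eventually stabilize.'' But on an infinite quiver a pointwise bound does not force stabilization: for instance, on $A_{\infty,\infty}$ with $\dim V \equiv 1$ there are infinitely many distinct interval dimension vectors all bounded by $1$. More seriously, the partial order of Definition~\ref{def: partial order on indecomposables} is on isomorphism classes and is defined via chains of nontrivial maps that may pass through indecomposables \emph{not} realized as subrepresentations of $V$; restricting attention to classes appearing inside $V$ does not automatically make the induced order well-founded when the ambient order is not. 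Indeed, for a non-eventually-outward orientation the paper's own results (via \cite{gallup2022decompositions}) show the full order fails to be well-founded, and a sufficiently large locally finite $V$ (say $\dim V \equiv 2$) will contain subrepresentations in every thin isomorphism class, so the restricted order inherits the infinite descending chains. Any proof of the conjecture will need a genuinely different mechanism here, not just a dimension bound.
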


\bibliographystyle{alpha}

\begin{thebibliography}{BGP73}

\bibitem[BGP73]{Bernstein_1973}
I~N Bernstein, I~M Gel'fand, and V~A Ponomarev.
\newblock Coxeter functors and {Gabriel}'s theorem.
\newblock {\em Russian Mathematical Surveys}, 28(2):17, apr 1973.

\bibitem[BLP11]{bautista-liu-paquette2011}
Raymundo Bautista, Shiping Liu, and Charles Paquette.
\newblock Representation theory of an infinite quiver.
\newblock {\em arXiv preprint arXiv:1109.3176}, 2011.

\bibitem[Bri]{brion2008representations}
Michel Brion.
\newblock Representations of quivers.

\bibitem[Gab72]{gabriel1972}
Peter Gabriel.
\newblock Unzerlegbare darstellungen i.
\newblock {\em Manuscripta mathematica}, 6(1):71--103, 1972.

\bibitem[GS22]{gallup2022decompositions}
Nathaniel Gallup and Stephen Sawin.
\newblock Decompositions of infinite-dimensional ${A}_{\infty,\infty}$ quiver
  representations.
\newblock {\em arXiv preprint arXiv:2211.12984}, 2022.

\bibitem[Hum72]{Humphreys72}
James~E. Humphreys.
\newblock {\em Introduction to Lie Algebras and Representation Theory}.
\newblock Springer-Verlag, New York, 1972.

\bibitem[Rin16]{ringel2016}
Claus~Michael Ringel.
\newblock Representation theory of {Dynkin} quivers. {Three} contributions.
\newblock {\em Frontiers of Mathematics in China}, 11(4):765--814, 2016.

\end{thebibliography}

\end{document}